\newcommand{\arxiv}[1]{\href{http://arxiv.org/abs/#1}{arXiv:#1}}
\newcommand*{\mailto}[1]{\href{mailto:#1}{\nolinkurl{#1}}}
\newtheorem{theorem}{Theorem}[section]
\newtheorem{lemma}[theorem]{Lemma}
\newtheorem{corollary}[theorem]{Corollary}
\newtheorem{remark}[theorem]{Remark}
\newtheorem{definition}[theorem]{Definition}
\newcommand{\R}{\mathbb{R}}
\newcommand{\C}{\mathbb{C}}
\newcommand{\nn}{\nonumber}
\newcommand{\beq}{\begin{equation}}
\newcommand{\eeq}{\end{equation}}
\newcommand{\bea}{\begin{eqnarray}}
\newcommand{\eea}{\end{eqnarray}}
\newcommand{\ul}{\underline}
\newcommand{\ol}{\overline}
\newcommand{\pa}{\partial}
\newcommand{\norm}[1]{\lVert#1 \rVert}
\newcommand{\abs}[1]{\lvert#1 \rvert}
\newcommand{\I}{\mathrm{i}}
\newcommand{\E}{\mathrm{e}}
\newcommand{\clos}{\mathop{\mathrm{clos}}}
\newcommand{\im}{\mathop{\mathrm{Im}}}
\newcommand{\weak}{\mathop{\mathrm{weak}}}
\DeclareMathOperator{\res}{Res}
\DeclareMathOperator{\loc}{loc}
\def\XXint#1#2#3{{\setbox0=\hbox{$#1{#2#3}{\int}$}
     \vcenter{\hbox{$#2#3$}}\kern-.5\wd0}}
\newcommand{\si}{\sigma}
\newcommand{\la}{\lambda}
\newcommand{\ga}{\gamma}
\numberwithin{equation}{section}
 \newcommand{\noprint}[1]{}
\begin{document}

\title[Inverse scattering theory for steplike potetnials]{Inverse scattering theory for Schr\"odinger operators
with steplike potentials}

\author[I. Egorova]{Iryna Egorova}
\address{B. Verkin Institute for Low Temperature Physics\\ 47, Lenin ave\\ 61103 Kharkiv\\ Ukraine\\ and Faculty of Mathematics\\ University of Vienna\\
Oskar-Morgenstern-Platz 1\\ 1090 Wien\\ Austria }
\email{\href{mailto:iraegorova@gmail.com}{iraegorova@gmail.com}}

\author[Z.\ Gladka]{Zoya Gladka}
\address{B. Verkin Institute for Low Temperature Physics\\ 47, Lenin ave\\ 61103 Kharkiv\\ Ukraine}
\email{\href{mailto:gladkazoya@gmail.com}{gladkazoya@gmail.com}}

\author[T.L.\ Lange]{Till Luc Lange}
\address{Faculty of Mathematics\\ University of Vienna\\
Oskar-Morgenstern-Platz 1\\ 1090 Wien\\ Austria}
\email{\mailto{till.luc.lange@univie.ac.at}}

\author[G.\ Teschl]{Gerald Teschl}
\address{Faculty of Mathematics\\ University of Vienna\\
Oskar-Morgenstern-Platz 1\\ 1090 Wien\\ Austria\\ and International Erwin Schr\"odinger
Institute for Mathematical Physics\\ Boltzmanngasse 9\\ 1090 Wien\\ Austria}
\email{\mailto{Gerald.Teschl@univie.ac.at}}
\urladdr{\url{http://www.mat.univie.ac.at/~gerald/}}

\thanks{Zh. Mat. Fiz. Anal. Geom. {\bf 11}, 123--158 (2015)}
\thanks{{\it Research supported by the Austrian Science Fund (FWF) 
under Grants No.\ Y330, V120 and W1245}.}

\keywords{Schr\"odinger operator, inverse scattering theory, steplike potential}
\subjclass[2000]{Primary 34L25, 81U40; Secondary 34B30, 34L40}

\begin{abstract}
We study the direct and inverse scattering problem for the one-dimensional Schr\"odinger equation with steplike
potentials. We give necessary and sufficient conditions for the scattering data to correspond to a
potential with prescribed smoothness and prescribed decay to their asymptotics. These results are important for solving the Korteweg--de Vries equation via
the inverse scattering transform.
\end{abstract}

\maketitle

%\tableofcontents

\section{Introduction}

Among various direct/inverse spectral problems the scattering problem on the whole axis for one-dimensional Schr\"odinger
operators with decaying potentials takes a particular place as  one of the most rigorously investigated spectral
problems. Being considered first by Kay and Moses \cite{KM56} on a physical level of rigor,
it was rigorously studied by Faddeev \cite{Fa58}, and then revisited independently by Marchenko \cite{M} and by Deift and Trubowitz \cite{DT79}. In particular, Faddeev \cite{Fa58} considered the inverse problem in the class of potentials which have
a finite first moment (i.e., \eqref{decay} below with $c_-=c_+=0$ and $m=1$) but the importance of the behavior of the
scattering coefficients at the bottom of the continuous spectrum was missed. A complete solution was  given by
Marchenko \cite{M} (see also Levitan \cite{L}) for the first moment ($m=1$) and by Deift and Trubowitz \cite{DT79} for the second moment ($m=2$) who also gave an example
showing that some condition on the aforementioned behavior is necessary for solving the inverse problem.

The next simplest case is  the so-called steplike case where the potential tends to different constants one the left and right
half-axes. The corresponding scattering problem was first considered on an informal level by Buslaev and Fomin in \cite{BF} who
studied mostly the direct scattering problem and derived the main equation of the inverse problem --- the Gelfand--Levitan--Marchenko (GLM) equation.
A complete solution of the direct and inverse scattering problem for steplike potentials with a finite second moment (i.e., \eqref{decay} below with $m=2$) was solved rigorously by Cohen and Kappeler \cite{CK} (see also \cite{DS} and \cite{GNP}). While several aspects in the
steplike case are similar to the decaying case, there are also some distinctive differences due to the presence of spectrum
of multiplicity one. Moreover, there have also been further generalizations to the case of periodic backgrounds made by Firsova \cite{F1,F2,F3}
and to steplike finite-gap backgrounds by Boutet de Monvel and two of us \cite{bet} (see also \cite{MLT}) and to steplike almost periodic backgrounds by
Grunert \cite{Gr1,Gr2}. We refer to these publications for more information.

Our aim in the present paper is to use  Marchenko's approach for the generalization of the results of \cite{CK}
to the case of steplike potentials with finite first moment which  turns out to be much more delicate than the
second moment. Note that this question is partly studied in \cite{Ba}. In fact, we will also give a complete solution of the inverse problem for potentials with any
given number of moments $m\ge 1$ and any given number of derivatives $n\ge 0$ which has important
applications for the solution of the Korteweg--de Vires (KdV) equation.

As is well known, the inverse scattering transform (IST) is the main ingredient for solving and understanding the
solutions of the KdV (as well as the associated modified KdV) equation. In fact, applications of the IST to the initial value problem for KdV 
were already considered by many authors (see for example the monographs \cite{evh}, \cite{M}, \cite{Zakh}). For the steplike case
this was first done  by Cohen \cite{C} and Kappeler \cite{Kap}. For more general backgrounds we refer to \cite{F4}
and to the more recent works \cite{EGT}, \cite{ET1}, \cite{ET3} as well as the references therein.
For the long-time asymptotics of solutions, we refer to  \cite{Zakh}, \cite{Kh}, \cite{Ven}, \cite{Bik1}, \cite{Bik2} and
to \cite{AB}, \cite{GT}, \cite{MLT2}, \cite{KM}, \cite{EGKT},  \cite{LN}, \cite{LN1} for more recent developments.
In a forthcoming paper \cite{EGLT2} we will apply the inverse scattering transform to solve the Cauchy problem
for the Korteweg--de Vries equation for initial conditions in the class of potentials investigated in the present
paper, extending the results from \cite{ET3}.

We consider the spectral problem
\beq\label{Sp}
(L f)(x):=- \frac{d^2 }{dx^2}f(x) + q(x)f(x)=\lambda f(x),\qquad x\in\R,
\eeq
with a steplike potential $q(x)$  such that
\[
q(x)\to c_{\pm}, \ \ \mbox{as}\ \ x\to \pm\infty,
\]
where $c_+, c_-\in\R$ are in general different values. 
Everywhere in this paper we assume that  $q\in L^1_{\loc}(\R)$ and tends to its background asymptotics $c_+$
and $c_-$ with $m$ "moments" finite:
\beq\label{decay}
\int_0^{+\infty} (1+|x|^m)(|q(x)-c_+| + |q(-x)-c_-|)dx<\infty,
\eeq
 where $m\geq 0$ is a fixed integer. 
 \begin{definition} Let $m\geq 0$ and $n\geq 0$ be  integers and $f:\R \to \R$ be an $n$ times differentiable function. We say that $f\in \mathcal L_m^n(\R_\pm)$ if $f^{(j)}(x) (1+|x|^m) \in L^1(\R_\pm)$ for $j=0,1,\dots,n$.
\end{definition}

Note, that $f\in  \mathcal L_m^0(\R_\pm)$ means that $\int_{\R_\pm}|f(x)|(1+|x|^m)dx<\infty$.
By this definition $\mathcal L_0^0(\R_\pm)=L^1(\R_\pm)\cap L^1_{\loc}(\R)$ and $\mathcal L_0^{j}(\R_\pm)=\{f:\  f^{(i)}\in \mathcal L_0^0(\R_\pm),\  0\leq i\leq j\}$.

\begin{definition}
 Let $c_\pm$ be given real values and let $m\geq 0$, $n\geq 0$ be given integers. We say that $q\in\mathcal L_m^n(c_+, c_-)$ if $q_\pm(\cdot):=q(\cdot)-c_\pm\in \mathcal L_m^n(\R_\pm)$.
\end{definition}
Note that  $q\in\mathcal L_m^0(c_+, c_-)$ if condition \eqref{decay} holds. If $q\in \mathcal L_m^n(c_+, c_-)$ with $n\geq 1$ then in addition
\beq\label{decay1}
\int_{\R}(1+|x|^m)|q^{(i)}(x)|dx<\infty, \quad i=1,\dots,n.
\eeq
The aim of this paper is a complete study of the direct and inverse scattering problem for potentials from the classes $\mathcal L^n_m(c_+,c_-)$ with $m\geq 1$.
In particular, we propose necessary and sufficient conditions on the set of scattering data associated with such potentials.
The following notations will be used throughout this paper:

Abbreviate 
\beq\label{nott}\ul c=\min\{c_-, c_+\}, \qquad \ol c=\max\{c_-, c_+\},
\eeq and $\mathcal D:=\C\setminus\Sigma$, where  
$\Sigma=\Sigma^u\cup\Sigma^l$ with $\Sigma^u=\{\la^u=\la+\I 0, \la\in[\ul c,\infty)\}$ and  $\Sigma^l=\{\la^l=\la-\I 0, \la\in[\ul c,\infty)\}$. We treat the boundary of the domain $\mathcal D$ as consisting of two sides of cuts along the interval $ [\ul c,\infty)$, with distinguished  points $\la^u$ and $\lambda^l$ on this boundary. In equation \eqref{Sp}  the spectral parameter $\la$ belongs to the set $\clos(\mathcal D)$, where $\clos(\mathcal D)=\mathcal D\cup\Sigma^u\cup\Sigma^l$. 
Along   with  $\la$ we will use two more spectral parameters
\beq\label{defkpm}
k_\pm:=\sqrt{\la - c_\pm},
\eeq
which map the domains $\C\setminus [c_\pm, \infty)$ conformally onto $\C^+$. Thus there is a one to one correspondence between the parameters $k_\pm$ and $\la$.

\section{The Direct scattering problem}

\subsection{Properties of the Jost solutions}

In this subsection we collect some well-known properties of the Jost solutions for \eqref{Sp}
with $q\in \mathcal L_1^0(c_+,c_-)$ and establish additional properties of these solutions for a potential from the class
$\mathcal L_m^n(c_+,c_-)$ with $m\geq 2$ or $n\geq 1$.
All the estimates below are one-sided and hence are generated by the behavior of the potential on one half axis.
For $q_{\pm}(\cdot)=q(\cdot)-c_\pm\in  \mathcal L_m^n(\R_\pm)$, $m\geq 1$, $n\geq 0$, introduce
nonnegative, as $x\to\pm \infty$ nonincreasing functions
\beq\label{defsig}
\sigma_{\pm,i}(x):=\pm\int_x^{\pm\infty} |q_{\pm}^{(i)}(\xi)|d\xi, \quad \hat\sigma_{\pm,i}(x):= \pm\int_x^{\pm\infty}\sigma_{\pm,i}(\xi) d\xi, \quad i=0,1,\dots,n.
\eeq
Evidently,
\beq\label{decay6}\sigma_{\pm,i}(\cdot)\in  \mathcal L_{m-1}^1(\R_{\pm}),\ m\geq 1, \ 
 \hat \sigma_{\pm,i}(\cdot)\in   \mathcal L_{m-2}^2(\R_{\pm}),\ m\geq 2,
\eeq
\beq \label{decay5}\hat\sigma_{\pm,i} (x)\downarrow 0\ \ \mbox{as}\ \ x\to \pm\infty,\ \ \mbox{for}\ q_\pm\in \mathcal L_1^n(\R_\pm),\ \ i=0,1,\dots,n.
\eeq

\begin{lemma} (\cite[Lemmas 3.1.1--3.1.3]{M}).
Let $q_{\pm}(\cdot)=q(\cdot)-c_\pm\in  \mathcal L_1^0(\R_\pm)$. Then for all $\la\in\clos(\mathcal D)$ equation \eqref{Sp} has a solution
$\phi_{\pm}(\la,x)$ which can be represented as
\beq\label{Jost1}
\phi_{\pm}(\la,x)=\E^{\pm\I k_{\pm} x} \pm \int_x^{\pm\infty} K_{\pm}(x,y)\E^{\pm\I k_{\pm} y}dy,
\eeq
where the kernel $K_{\pm}(x,y)$ is real-valued and satisfies the inequality
\beq\label{K1}
|K_{\pm}(x,y)|\leq\frac{1}{2}\sigma_{\pm,0}
\left(\frac{x+y}{2}\right)\exp\left\{\hat\sigma_{\pm,0}(x) -
\hat\sigma_{\pm,0}\left(\frac{x+y}{2}\right)\right\}.
\eeq
Moreover,
\[
K_{\pm}(x,x)=\pm\dfrac{1}{2}\int_x^{\pm\infty}q_{\pm}(\xi)d\xi.
\]
The function $K_{\pm}(x,y)$ has first order partial derivatives which satisfy the inequality
\beq\label{K2}
\left|\frac{\partial K_{\pm}(x_1,x_2)}{\partial x_j} \pm\frac{1}{4}q_{\pm}\left(\frac{x_1 + x_2}{2}\right)\right|\leq
 \eeq
 \[\leq\frac{1}{2}\sigma_{\pm,0}
\left(x\right)\sigma_{\pm,0}
\left(\frac{x_1+x_2}{2}\right)\exp\left\{\hat\sigma_{\pm,0}(x_1) - \hat\sigma_{\pm,0}\left(\frac{x_1+x_2}{2}\right)\right\}.\]
The solution $\phi_{\pm}(\la,x)$ is an analytic function of $k_\pm$ in $\C^+$ and is
continuous up to  $\R$. For all $\la\in\clos(\mathcal D)$ the following estimate is valid
\beq\label{zeros}
\left|\phi_\pm(\la,x) - \E^{\pm\I k_\pm x}\right|\leq\left(\hat\sigma_{\pm,0}(x) -
\hat\sigma_{\pm,0}\left(x \pm\frac{1}{|k_\pm|}\right)\right)\E^{- \im(k_\pm) x +\hat\sigma_{\pm,0}(x)}.
\eeq
For $k_\pm\in\R\setminus\{0\}$ the functions $\phi_\pm(\la,x)$ and $\overline{\phi_\pm(\la,x)}$  are linearly independent
with \[ W(\phi_\pm(\la,\cdot), \overline{\phi_\pm(\la,\cdot)})=\mp 2\I k_\pm,\]
where $W(f,g)=f g^\prime - gf^\prime$ denotes the usual Wronski determinant.
\end{lemma}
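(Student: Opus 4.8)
The plan is to construct the right Jost solution $\phi_+$ in detail; the left one $\phi_-$ follows verbatim after the reflection $x\mapsto -x$, which swaps $c_+$ and $c_-$. Writing $q=c_++q_+$ and using $k_+=\sqrt{\la-c_+}$ from \eqref{defkpm}, the spectral equation \eqref{Sp} becomes $-\phi''+q_+\phi=k_+^2\phi$, whose free solution is $\E^{\I k_+x}$. Inserting the ansatz \eqref{Jost1} and integrating by parts twice in $y$, I would show that \eqref{Jost1} solves \eqref{Sp} for all $k_+$ at once exactly when $K_+$ satisfies the Goursat problem $\partial_x^2K_+-\partial_y^2K_+=q_+(x)K_+$ with the characteristic condition $K_+(x,x)=\tfrac12\int_x^{\infty}q_+(\xi)\,d\xi$ and $K_+(x,y),\partial_yK_+(x,y)\to0$ as $y\to+\infty$. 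The essential gain is that $k_+$ disappears: $K_+$ depends only on $x,y$ and $q_+$, so all the $k_+$-dependence of $\phi_+$ is carried by $\E^{\I k_+y}$.

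Passing to characteristic coordinates $u=\tfrac{x+y}{2}$, $v=\tfrac{y-x}{2}$ turns the Goursat problem into the Volterra equation
\[
K_+(x,y)=\tfrac12\int_{\frac{x+y}{2}}^{\infty}q_+(s)\,ds+\int_{\frac{x+y}{2}}^{\infty}\int_0^{\frac{y-x}{2}}q_+(s-r)\,K_+(s-r,s+r)\,dr\,ds,
\]
which I would solve by the Neumann series $K_+=\sum_{j\ge0}K_+^{(j)}$, with $K_+^{(0)}(x,y)=\tfrac12\int_{(x+y)/2}^{\infty}q_+(s)\,ds$ and $K_+^{(j+1)}$ produced by feeding $K_+^{(j)}$ into the double integral. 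As $q_+$ is real each iterate is real, so $K_+$ is real-valued, and setting $y=x$ in the integral equation collapses the inner integral and gives the diagonal value $K_+(x,x)=\tfrac12\int_x^{\infty}q_+(\xi)\,d\xi$ immediately.

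The heart of the matter, and the step I expect to be the main obstacle, is the estimate \eqref{K1}. I would prove by induction on $j$ that
\[
|K_+^{(j)}(x,y)|\le\tfrac12\,\sigma_{+,0}\!\Big(\tfrac{x+y}{2}\Big)\,\frac{1}{j!}\Big(\hat\sigma_{+,0}(x)-\hat\sigma_{+,0}\big(\tfrac{x+y}{2}\big)\Big)^{j},
\]
using the definitions \eqref{defsig}, the monotonicity of $\sigma_{+,0}$ and $\hat\sigma_{+,0}$ and the relations $\hat\sigma_{+,0}'=-\sigma_{+,0}$, $\sigma_{+,0}'=-|q_+|$; the factorial is generated by the nested $s,r$-integrations. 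Summing over $j$ gives \eqref{K1} together with absolute and uniform convergence on every half-line $x\ge-A$, which renders $K_+$ continuous and, a posteriori, justifies the integrations by parts in the reduction. Differentiating the integral equation in $x$ and in $y$, separating off the contribution of $K_+^{(0)}$ (whose $x$- and $y$-derivatives each equal $-\tfrac14q_+\big(\tfrac{x+y}{2}\big)$) and running the analogous induction on the differentiated iterates, I would obtain the existence of $\partial_xK_+$, $\partial_yK_+$ and the bound \eqref{K2}.

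It remains to extract the three analytic statements. Since $K_+$ is $k_+$-independent and $|\E^{\I k_+y}|=\E^{-\im(k_+)y}$, the bound \eqref{K1} shows that $\int_x^{\infty}K_+(x,y)\E^{\I k_+y}\,dy$ converges for $\im k_+\ge0$, is analytic for $\im k_+>0$ by differentiation under the integral sign, and is continuous up to $\R$ by dominated convergence; hence $\phi_+$ is analytic in $k_+\in\C^+$ and continuous up to $\R$. The sharper estimate \eqref{zeros} is not efficiently reached from the modulus bound \eqref{K1} near $k_+=0$, so instead I would use the normalized function $m_+(k_+,x):=\E^{-\I k_+x}\phi_+(\la,x)$, which solves $m_+(k_+,x)=1+\int_x^{\infty}D_{k_+}(y-x)\,q_+(y)\,m_+(k_+,y)\,dy$ with $D_{k_+}(s)=\frac{\E^{2\I k_+s}-1}{2\I k_+}$. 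The decisive facts are the uniform bound $|D_{k_+}(s)|\le\min(s,\tfrac{1}{|k_+|})$ valid for $\im k_+\ge0$ and the Fubini identity $\int_x^{\infty}\min(y-x,\tfrac{1}{|k_+|})\,|q_+(y)|\,dy=\hat\sigma_{+,0}(x)-\hat\sigma_{+,0}\big(x+\tfrac{1}{|k_+|}\big)$; iterating the Volterra equation and invoking the monotonicity of $\hat\sigma_{+,0}$ then yields $|m_+-1|\le(\hat\sigma_{+,0}(x)-\hat\sigma_{+,0}(x+\tfrac{1}{|k_+|}))\,\E^{\hat\sigma_{+,0}(x)}$, and multiplication by $\E^{-\im(k_+)x}$ gives \eqref{zeros}. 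Finally, for real $k_+\ne0$ the number $\la=k_+^2+c_+$ is real, so $\overline{\phi_+}$ also solves \eqref{Sp} and $W(\phi_+,\overline{\phi_+})$ is independent of $x$; evaluating it as $x\to+\infty$ with $\phi_+\to\E^{\I k_+x}$, $\overline{\phi_+}\to\E^{-\I k_+x}$ gives $W=-2\I k_+$, and the opposite sign $+2\I k_-$ for $\phi_-$, so the two solutions are linearly independent.
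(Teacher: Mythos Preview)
Your proposal is correct and follows essentially the same approach as the source the paper cites: the paper does not prove this lemma itself but attributes it to \cite[Lemmas~3.1.1--3.1.3]{M}, and immediately after the statement it singles out the Volterra equation \eqref{Kint} as ``the key ingredient for proving the estimates \eqref{K1} and \eqref{K2}'' --- exactly the integral equation you derive and iterate. Your handling of \eqref{zeros} via the Volterra equation for $m_+=\E^{-\I k_+x}\phi_+$ with kernel $D_{k_+}(s)=(\E^{2\I k_+s}-1)/(2\I k_+)$ and the identity $\int_x^\infty\min(y-x,|k_+|^{-1})|q_+(y)|\,dy=\hat\sigma_{+,0}(x)-\hat\sigma_{+,0}(x+|k_+|^{-1})$ is likewise the standard Marchenko argument.
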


Formulas \eqref{K1} and \eqref{K2} together with \eqref{Jost1} and \eqref{decay6}  imply
\begin{corollary}
Let $q_\pm\in \mathcal L^0_m(\R_\pm)$, $m\geq 1$. Then
\beq\label{refff}  K_\pm(x,\cdot),\ \  \frac{\pa K_\pm(x,\cdot)}{\pa x}\in \mathcal L^0_{m-1}(\R_\pm),\ \ m\geq 1,\eeq
and the function $\phi_\pm(\la,x)$
is $m-1$ times differentiable with respect to $k_\pm\in\R$.
\end{corollary}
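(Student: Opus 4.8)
The plan is to derive both assertions directly from the pointwise bounds \eqref{K1} and \eqref{K2}, the inclusion \eqref{decay6}, and the representation \eqref{Jost1}; no new tools are required. I will treat the $+$ case, writing the second variable as $y$, so the relevant range is $y\in[x,+\infty)$. For the first inclusion in \eqref{refff} I start from \eqref{K1}. Since $\hat\sigma_{+,0}$ is nonincreasing towards $+\infty$ and $\tfrac{x+y}{2}\ge x$ for $y\ge x$, the exponential factor in \eqref{K1} is bounded by the constant $\E^{\hat\sigma_{+,0}(x)}$, whence $|K_+(x,y)|\le \tfrac12\E^{\hat\sigma_{+,0}(x)}\sigma_{+,0}\!\left(\tfrac{x+y}{2}\right)$. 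Multiplying by $(1+|y|^{m-1})$ and substituting $u=\tfrac{x+y}{2}$ (so $y=2u-x$, $dy=2\,du$) turns the required integral $\int_x^{+\infty}|K_+(x,y)|(1+|y|^{m-1})\,dy$ into a constant multiple of $\int_x^{+\infty}\sigma_{+,0}(u)(1+|2u-x|^{m-1})\,du$.

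For $m=1$ the weight is constant and the claim is immediate; for $m\ge 2$ the convexity estimate $|2u-x|^{m-1}\le 2^{m-2}(|u|^{m-1}+|x|^{m-1})\le C_x(1+|u|^{m-1})$ holds with $C_x$ depending only on $x$ and $m$. Splitting the $u$-integral into the bounded piece near $u=x$, which is finite by local integrability of $\sigma_{+,0}$, and the tail, which is controlled by $\int_{\R_+}\sigma_{+,0}(u)(1+|u|^{m-1})\,du<\infty$ from \eqref{decay6}, gives $K_+(x,\cdot)\in\mathcal L_{m-1}^0(\R_+)$. For $\pa K_+/\pa x$ I use \eqref{K2} and the triangle inequality to bound
\[
\left|\frac{\pa K_+(x,y)}{\pa x}\right|\le \tfrac14\left|q_+\!\left(\tfrac{x+y}{2}\right)\right|+\tfrac12\,\sigma_{+,0}(x)\,\sigma_{+,0}\!\left(\tfrac{x+y}{2}\right)\E^{\hat\sigma_{+,0}(x)}.
\]
The second summand is handled exactly as before (the factor $\sigma_{+,0}(x)$ is constant for fixed $x$), and the first, after the same substitution, reduces to $\int_{\R_+}|q_+(u)|(1+|u|^{m-1})\,du$, which is finite because $q_+\in\mathcal L_m^0(\R_+)\subset\mathcal L_{m-1}^0(\R_+)$. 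This completes \eqref{refff}.

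For the differentiability of $\phi_+$ in $k_+\in\R$, I differentiate \eqref{Jost1} under the integral sign, obtaining formally
\[
\frac{\pa^j \phi_+(\la,x)}{\pa k_+^j}=(\I x)^j\E^{\I k_+ x}+\int_x^{+\infty}K_+(x,y)\,(\I y)^j\,\E^{\I k_+ y}\,dy,\qquad 0\le j\le m-1.
\]
Each differentiated integrand is dominated in modulus by $|K_+(x,y)|\,|y|^{j}$, and $\int_x^{+\infty}|K_+(x,y)|\,|y|^{j}\,dy<\infty$ for $0\le j\le m-1$ is precisely the content of $K_+(x,\cdot)\in\mathcal L_{m-1}^0(\R_+)$ just proved. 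Dominated convergence then legitimizes differentiation up to order $m-1$, yielding the second assertion. The $-$ case is identical after reflecting $x\mapsto-x$ and replacing $\int_x^{+\infty}$ by $\int_{-\infty}^{x}$.

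As for the main difficulty: there is no deep obstacle, only weight bookkeeping. The one point demanding care is the interplay of the weights under the substitution $u=\tfrac{x+y}{2}$ — one must verify that the weight $(1+|y|^{m-1})$ in $y$ is absorbed by the weight built into \eqref{decay6} for the variable $u$, uniformly enough to apply it. The other subtlety is the order count in \eqref{K2}: the $q_+$ term lives in $\mathcal L_m^0$, one moment better than $\mathcal L_{m-1}^0$, and it is exactly this extra moment that keeps $\pa K_+/\pa x$ in the same class $\mathcal L_{m-1}^0$ rather than a worse one.
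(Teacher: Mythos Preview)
Your proof is correct and follows precisely the route the paper indicates: the corollary is stated immediately after Lemma~2.1 with the remark that ``Formulas \eqref{K1} and \eqref{K2} together with \eqref{Jost1} and \eqref{decay6} imply'' it, and you have simply filled in the bookkeeping details (bounding the exponential factor, making the substitution $u=\tfrac{x+y}{2}$, and justifying differentiation under the integral via dominated convergence) that the paper leaves to the reader.
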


%%%%%%%%%%%%%
\noprint{

Note also that for $m\geq 2$ Lemma \ref{lemma2.1} implies $x K_\pm(x,x)\to 0$ and
\beq\label{est}
\frac{\pa^l K_\pm(x,\cdot)}{\pa x^l}\in L^0_{m-1}(\R_\pm),\ l=0,1.
\eeq
It allows us to compute the following Wronskian $W(\phi_\pm, \frac{\pa}{\pa k_\pm}\phi)(c_\pm)$. Namely, if \\
$\frac{\pa}{\pa k_\pm}\phi_\pm(\la,x)$ exists, then it solves
the Schr\"odinger equation for $\la=c_\pm$, and, therefore, the Wronskian $W\Bigl(\phi_\pm(c_\pm,\cdot),\frac{\pa}{\pa k_\pm}\phi_\pm(c_\pm,\cdot)\Bigr)$ does not depend on the spatial variable and can be evaluated for large values of $x$.
Formula \eqref{Jost1} implies
\beq\label{sign6}\phi_\pm(c_\pm,x)=1 \pm\int_x^{\pm\infty} K_\pm(x,y)dy,\ \ \frac{\pa}{\pa k_\pm}\phi_\pm(c_\pm,x)=\pm\I x + \I\int_x^{\pm\infty}y K_\pm(x,y)dy,
\eeq
\beq\label{sign7}\frac{\pa}{\pa k_\pm}\phi_\pm^\prime(c_\pm,x)=\I(1 \mp x K_\pm(x,x)+\int_x^{\pm\infty}\frac{\pa K_\pm(x,y)}{\pa x}ydy).
\eeq 
Thus for $x\to\pm\infty$
\[
\phi_\pm(c_\pm,x)\frac{\pa}{\pa k_\pm}\phi_\pm^\prime(c_\pm,x)=\I+o(1), 
\left|\phi_\pm^\prime(c_\pm,x)\frac{\pa}{\pa k_\pm}\phi_\pm(c_\pm,x)\right|\leq C x\sigma_{\pm,0}(x)\hat\sigma_{\pm,0}(2x)=o(1).
\]
We have proved

\beq\label{Wronsk3}
W\left(\phi_\pm,\frac{\pa}{\pa k_\pm}\phi_\pm\right)(c_\pm)=\pm\I.
\eeq
\begin{proof}
The function $\frac{\pa}{\pa k_\pm}\phi_\pm(\la,x)$  solves
the Schr\"odinger equation for $\la=c_\pm$, and therefore, the Wronskian $W\Bigl(\phi_\pm(c_\pm,\cdot),\frac{\pa}{\pa k_\pm}\phi_\pm(c_\pm,\cdot)\Bigr)$
does not depend on spatial variable and  can be estimated  for large values of $x$.
For $m\geq 2$ Lemma \ref{lemma2.1} implies $x K_\pm(x,x)\to 0$ and
Formula \eqref{Wronsk3} is immediate from this and  \eqref{Jost1} -\eqref{K2}.
\end{proof}
}
%%%%%%%%

Note, that the key ingredient for proving the estimates \eqref{K1} and \eqref{K2} is a rigorous investigation of the following integral equation (formula (3.1.12) of \cite{M})
\beq\label{Kint}
K_\pm(x,y)=\pm\frac{1}{2}\int_{\frac{x+y}{2}}^{\pm\infty}q_\pm(\xi)d\xi +
\int_{\frac{x+y}{2}}^{\pm\infty}
d\alpha\int_0^{\frac{y-x}{2}}q_\pm(\alpha - \beta)K_\pm(\alpha - \beta, \alpha + \beta)d\beta.
\eeq 
To further study the properties of the Jost solution we represent \eqref{Jost1} in the form proposed in \cite{DT79}:
\beq\label{Jost11}
\phi_\pm(\la,x)=\E^{\I k_\pm x}\left(1 \pm\int_0^{\pm\infty}B_\pm(x,y)\E^{\pm 2\I k_\pm y}dy\right),
\eeq
where
\beq\label{B-K}
B_\pm(x,y)=2K_\pm(x,x+2y),\ \ B_\pm(x,0)=\pm\int_x^{\pm\infty} q_\pm(\xi)d\xi,
\eeq
and equation \eqref{Kint} transforms into the following integral equation with respect to  $\pm y\geq 0$
\beq \label{Bint}
B_\pm(x,y)=\pm\int_{x+y}^{\pm\infty}q_\pm(s)ds + \int_{x+y}^{\pm\infty}d\alpha
\int_0^y d\beta q_\pm(\alpha - \beta)B_\pm(\alpha - \beta, \beta).\eeq

Equation \eqref{Bint} is the basis for proving the following
\begin{lemma}\label{Best}
Let $n\geq 1$ and $m\geq 1$ be fixed natural numbers and let $q_\pm\in \mathcal L_m^n(\R_\pm)$.
Then the functions $B_\pm( x,y)$ have $n+1$ partial derivatives and the following estimates are valid for $l\leq s\leq n+1$
\beq\label{Best1}
\left|\frac{\partial^{s}}{\partial x^l\,\partial y^{s-l}}B_\pm(x,y) \pm q_\pm^{(s-1)}(x+y)\right|\leq C_{\pm}(x)\nu_{\pm,s}(x)\nu_{\pm,s}(x+y),
\eeq
where
\beq\label{defsig1}
\nu_{\pm,l}(x)=\sum_{i=0}^{l-2} \left(\sigma_{\pm, i}(x) + |q_\pm^{(i)}(x)|\right),\quad l\geq 2,\quad \nu_{\pm,1}(x):=\sigma_{\pm,0}(x),
\eeq
and $C_{\pm}(x)=C_{\pm}(x,n)\in\mathcal C(\R)$ are  positive functions which are nonincreasing as $x\to\pm\infty$.
\end{lemma}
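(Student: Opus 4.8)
The plan is to induct on the total order $s$, using the integral equation \eqref{Bint} as the only input; I treat the $+$ case throughout, the $-$ case being identical after reflection. The base case $s=1$ is already contained in the previous lemma: since $B_+(x,y)=2K_+(x,x+2y)$, the estimates \eqref{K1}--\eqref{K2} give both $|\partial_xB_+ + q_+(x+y)|$ and $|\partial_yB_+ + q_+(x+y)|$ bounded by a constant times $\sigma_{+,0}(x)\sigma_{+,0}(x+y)=\nu_{+,1}(x)\nu_{+,1}(x+y)$, once the exponential factor in \eqref{K1}--\eqref{K2} is absorbed into $C_+(x)$.

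The engine of the induction is a pair of first-order identities obtained by differentiating \eqref{Bint} (for $y\ge 0$):
\beq
\partial_xB_+(x,y)+q_+(x+y)=-\int_x^{x+y}q_+(u)B_+(u,x+y-u)\,du,
\eeq
\beq
(\partial_y-\partial_x)B_+(x,y)=\int_x^{\infty}q_+(u)B_+(u,y)\,du.
\eeq
The first isolates the singular part $-q_+(x+y)$ explicitly and writes the remainder as a \emph{finite-range} integral; the second trades a $y$-derivative for an $x$-derivative up to a half-line integral of $B_+$. For the inductive step I assume all derivatives up to order $s-1$ exist and satisfy \eqref{Best1}. To produce a mixed derivative with $l\ge 1$, I apply $\partial_x^{l-1}\partial_y^{s-l}$ to the first identity: the explicit term yields $\partial_x^{l-1}\partial_y^{s-l}q_+(x+y)=q_+^{(s-1)}(x+y)$, so the left-hand side becomes exactly the quantity $\partial_x^l\partial_y^{s-l}B_++q_+^{(s-1)}(x+y)$ to be estimated, while Leibniz differentiation of the finite-range integral produces only (a) boundary contributions from the limits $u=x$ and $u=x+y$, of the form $q_+^{(a)}(x)(\partial^{b}B_+)(x,y)$ or $q_+^{(a)}(x+y)(\partial^{b}B_+)(x+y,0)$ with $a+b\le s-1$, and (b) integrals $\int_x^{x+y}q_+^{(a)}(u)(\partial^{c}B_+)(u,x+y-u)\,du$ with $c\le s-1$, all of which fall under the inductive hypothesis. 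The pure derivative $l=0$ is handled through the second identity, $\partial_y^sB_+=\partial_x\partial_y^{s-1}B_++\int_x^\infty q_+(u)\,\partial_y^{s-1}B_+(u,y)\,du$, whose first summand is the $l=1$ case just treated; in the remaining integral I would invoke the second identity again at the inner point and integrate by parts in $u$, thereby \emph{balancing} the $s-1$ surplus derivatives between the two factors and reducing matters to integrals $\int_x^\infty q_+^{(a)}(u)\,q_+^{(b)}(u+y)\,du$ with $a+b\le s-2$, plus already-controlled boundary terms. Existence and continuity of the new derivatives is then justified a posteriori, since every differentiation under the integral sign and of the variable limits is dominated, locally uniformly in $(x,y)$, by the integrable majorants supplied by the lower-order bounds and by $q_+\in\mathcal L^n_m(\R_+)$.

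The heart of the matter, and the step I expect to be the main obstacle, is to show that each remainder produced above is bounded by $C_+(x)\nu_{+,s}(x)\nu_{+,s}(x+y)$. The boundary terms are easy: using the monotonicity $\sigma_{+,i}(x+y)\le\sigma_{+,i}(x)$ valid for $y\ge 0$, one places each factor at the correct spectral variable and reads off the product structure. The delicate estimates are the convolution integrals $\int_x^\infty q_+^{(a)}(u)\,q_+^{(b)}(u+y)\,du$ and their $\sigma$-weighted analogues, where a naive Hölder bound (replacing one factor by its supremum over $[x,\infty)$) destroys the product $\nu_{+,s}(x)\nu_{+,s}(x+y)$, because the supremum of a derivative is \emph{not} controlled by the $\nu$'s. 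Instead I would expand the lower-index factor about the near endpoint, $q_+^{(a)}(u)=q_+^{(a)}(x)+\int_x^u q_+^{(a+1)}$, and apply Fubini, turning the double integral into $\sigma$-weighted single integrals to which the monotonicity argument applies; the integration-by-parts balancing performed above is precisely what keeps the indices in the range where the resulting $\sigma_{+,a+1}$ and $\sigma_{+,b}$ remain among the summands of $\nu_{+,s}$.

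Verifying that all indices stay within the range covered by $\nu_{+,s}$ — in particular handling the borderline top-order contribution, where the first-moment hypothesis \eqref{decay} and the pointwise terms $|q_+^{(i)}|$ built into the definition of $\nu_{+,s}$ are essential — is the most intricate bookkeeping of the argument. Finally, the cumulative constants generated along the induction (the exponential weights inherited from \eqref{K1} and the Gronwall-type factor implicit in the successive-approximation solution of \eqref{Bint}) are all monotone in $x$, and are collected into the single continuous, eventually nonincreasing function $C_+(x)$.
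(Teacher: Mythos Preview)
Your approach is essentially that of the paper: both derive the same pair of first-order identities (the paper's \eqref{der1} and \eqref{der4}) and induct on the total order $s$, treating the mixed case $l\ge1$ through the finite-range integral and the pure case $l=0$ through the half-line integral. The paper packages the boundary terms arising from the variable limits into explicit collections $D_{\pm,p,k}(x,y)$ and $D_{\pm,p}(x+y)$ (its \eqref{der5}--\eqref{der9}), whereas you describe them verbally; conversely, the paper dispatches the final estimation in one sentence (``complete the proof by induction taking into account \eqref{K1}, \eqref{K2}''), while you go further in isolating the half-line convolution integrals as the delicate point and proposing a near-endpoint expansion plus Fubini to obtain the product structure $\nu_{+,s}(x)\nu_{+,s}(x+y)$ --- a device the paper does not spell out, but which is compatible with its sketch.
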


\begin{proof}
Differentiating equation \eqref{Bint} with respect to each variable, we get
\beq\label{der1}
\frac{\partial B_\pm(x,y)}{\partial x}=\mp q_\pm(x+y) -\int_x^{x+y}q_\pm(s)B_\pm(s, x+y-s)ds;\eeq
\beq\label{der2}
\frac{\partial B_\pm(x,y)}{\partial y}=\mp q_\pm(x+y) -\int_x^{x+y}q_\pm(s)B_\pm(s, x+y-s)ds +\int_x^{\pm\infty}q_\pm(\alpha) B_\pm(\alpha,y) d\alpha.
\eeq
From these formulas and \eqref{B-K}, we obtain
\[
\frac{\partial B_\pm(x,0)}{\partial x}=\mp q_\pm(x);\quad
\frac{\partial B_\pm(x,y)}{\partial y}|_{y=0}=\mp q_\pm(x) \pm \frac{1}{2}\left(\int_x^{\pm\infty}q_\pm(\alpha)d\alpha\right)^2,
\]
\beq\label{der4}
\frac{\partial B_\pm(x,y)}{\partial y}=\frac{\partial B_\pm(x,y)}{\partial x} +\int_x^{\pm\infty}q_\pm(\alpha) B_\pm(\alpha,y) d\alpha.
\eeq
We observe that the partial derivatives of $B_\pm$, which contain at least one differentiation with respect to $x$, have the structure
\begin{align}\label{der5}
& \frac{\partial^{p}}{\partial x^k\partial y^{p-k}}B_\pm(x,y)=\mp q_\pm^{(p-1)}(x+y) +D_{\pm,p,k} (x,y) +\\ \nn
& \qquad {} +\int^x_{x+y}q_\pm(\xi)\frac{\partial^{p-1}}{\partial y^{p-1}}
B_\pm(\xi,x+y-\xi)d\xi,\quad p> k\geq 1,
\end{align}
where $D_{\pm,p,k}(x,y)$ is the sum of all derivatives of all integrated terms which appeared after $p-1$ differentiation of the upper and lower limits of the integral on the right hand side of \eqref{der1}. The integrand in \eqref{der5} at the lower limit of integration has value
\[
q_\pm(\xi)\frac{\partial^{p-1}}{\partial y^{p-1}} B_\pm(\xi, x+y-\xi)|_{\xi=x+y}=q_\pm(x+y)B_{\pm, p-1}(x+y),
\]
where
\beq\label{der6}
B_{\pm,r}(\xi)= \frac{\partial^{r}}{\partial t^{r}}B_\pm(\xi,t)|_{t=0}.
\eeq
Thus, further derivatives of such a term do not depend on whether we differentiate it with respect to $x$ or $y$.
The same integrand at the upper limit has the value
$q_\pm(x)\frac{\partial^{r-1}}{\partial y^{r-1}}B_\pm(x,y)$, and it will appear only after a differentation with respect to $x$.
Taking all this into account, we conclude that $D_{\pm,p,k}(x,y)$ in \eqref{der5} can be represented as
\[
D_{\pm,p,k}(x,y)=(1-\delta(k,1))\frac{\partial^{p-k}}
{\partial y^{p-k}}\sum_{s=2}^{ k}\frac{\partial^{k-s}}{\partial x^{k-s}}\left(
q_\pm(x)\frac{\partial^{s-2}}{\partial y^{s-2}}B_\pm(x,y)\right)-D_{\pm,p}(x+y),
\]
where
$\delta(r,s)$ is the Kronecker delta (i.e. the first summand is absent for $k=1$) and
\beq\label{der9}
D_{\pm,p}(\xi):=\sum_{s=0}^{p-2}\frac{d^{p-s}}
{d \xi^{p-s} }\left(q_\pm(\xi)B_{\pm,s}(\xi)\right),
\eeq
see \eqref{der6}. If we differentiate  \eqref{der2} with respect to $y$, then  for $p\geq 2$ we get
\[
\frac{\partial^{p}}{\partial y^p}B(x,y)=\mp q_\pm^{(p-1)}(x+y) +D_{\pm,p} (x+y) +\]
\[+\int^x_{x+y}q_\pm(\xi)\frac{\partial^{p-1}}{\partial y^{p-1}}
B_\pm(\xi,x+y-\xi)d\xi +\int_x^{\pm\infty} q_\pm(\xi)\frac{\partial^{p-1}}{\partial y^{p-1}}
B_\pm(\xi,y)d\xi,\]
where $D_{\pm,p}(\xi)$ is defined by \eqref{der9}. We complete the proof by induction taking into account \eqref{B-K} and the estimates \eqref{K1}, \eqref{K2} in which the exponent factors are replaced by the more crude estimate of type $C_{\pm}(x)$.
\end{proof}

\subsection{Analytical properties of the scattering data}

The spectrum of the Schr\"o- dinger operator $L$  with steplike potential \eqref{decay} consists of an absolutely continuous and a discrete part.
Using \eqref{nott} introduce the sets
\[
\Sigma^{(2)}:=[\overline{c}, +\infty), \quad
\Sigma^{(1)}:=[\underline{c}, \ \overline{c}], \quad
\Sigma =\Sigma^{(2)}\cup\Sigma^{(1)}.
\]
The set $\Sigma$ is the (absolutely) continuous spectrum of operator $L$, and
$\Sigma^{(1)}$ and $\Sigma^{(2)}$, are the parts
which are of multiplicities one and two, respectively. As mentioned in the introduction, we distinguish the points on the upper and lower sides of the set $\Sigma$.  Note that the set $\Sigma$ is the preimage of the real axis $\R$ under the conformal map $k_\pm(\la):\clos(\mathcal D) \to \ol{\C^+}$ when $c_\pm<c_\mp$. For $q \in\mathcal L_m^n(c_+,c_-)$ with $m\geq 1$ and $n\geq 0$, the operator $L$ has a finite discrete spectrum (see \cite{Akt}), which we denote as $\Sigma_d=\{\la_1,\dots,\la_p\}$,
where $\lambda_1<\cdots<\lambda_p<\underline{c}$. 
Our next step is to briefly describe some well-known analytical properties of the scattering data (\cite{BF}, \cite{CK}).
Most of these properties follow from analytical properties of the Wronskian  of the Jost solutions
$W(\la):=W\left(\phi_-(\la,\cdot),\,\phi_+(\la,\cdot)\right)$. The representations \eqref{Jost1} imply that the Jost solutions, together with their derivatives, decay exponentially fast as $x\to\pm\infty$ for $\im(k_\pm)>0$. Evidently, the discrete spectrum $\Sigma_d$ of $L$  coincides with the set of points, where $\phi_+$ is proportional to $\phi_-$ and,  their Wronskian vanishes.
The Jost solutions at these points are called the left and the right eigenfunctions. They are real-valued, and we denote the corresponding norming
constants by
\[
\gamma_{j}^{\pm}:=\left(\int_{\R}\phi_\pm^2(\la_j,x)dx\right)^{-1}.
\]

\begin{lemma}\label{propwronsk}
Let $q \in\mathcal L_m^n(c_+,c_-)$ with $m\geq 1$, $n\geq 0$. Then the function $W(\la)$ posseses the following properties
\begin{enumerate} [(i)]
 \item It is holomorphic in the domain $\mathcal D$ and continuous up to the boundary $\Sigma$ of this domain. Moreover, $W(\la+\I 0)=\overline{W(\la-\I 0)}\neq 0$ as $\la\in(\ul c, +\infty)$.
 \item It has simple  zeros in the domain $\mathcal D$ only at the points $\la_1,\dots,\la_p$, where 
\beq\label{zavis}
\left(\frac{ d W}{d\la}(\la_j)\right)^{-2}=\gamma_j^+\gamma_j^-.
\eeq

\end{enumerate}
\end{lemma}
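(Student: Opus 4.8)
The plan is to read off every assertion from the representation $W(\la)=W\left(\phi_-(\la,\cdot),\,\phi_+(\la,\cdot)\right)$ together with the analytic properties of the Jost solutions. Since both $\phi_\pm(\la,\cdot)$ solve \eqref{Sp}, their Wronskian is independent of $x$, so $W(\la)$ is genuinely a function of $\la$ alone. By the first Lemma each $\phi_\pm$ is analytic in $k_\pm\in\C^+$ and continuous up to $\R$; because $k_\pm=\sqrt{\la-c_\pm}$ is analytic on $\C\setminus[c_\pm,\infty)\supseteq\mathcal D$, each $\phi_\pm(\la,\cdot)$ is analytic in $\la\in\mathcal D$ and continuous up to $\Sigma$, and hence so is $W(\la)$. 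For the conjugation relation I would use that the kernels $K_\pm$ are real-valued: by \eqref{Jost1} the upper and lower rims $\la^u,\la^l$ of the cut correspond to $k_\pm=\pm\sqrt{\la-c_\pm}$, whence $\overline{\phi_\pm(\la^u,x)}=\phi_\pm(\la^l,x)$ on $\Sigma^{(2)}$ (on $\Sigma^{(1)}$ the solution attached to $\ol c$ is real while the other flips sign of its $k$). Taking conjugates in the bilinear $W$ then yields $W(\la^u)=\overline{W(\la^l)}$.

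To show $W(\la)\neq 0$ on $(\ul c,+\infty)$ I would argue by contradiction using the normalization $W(\phi_\pm,\overline{\phi_\pm})=\mp 2\I k_\pm$ from the first Lemma. On the interior of $\Sigma^{(2)}$ both $k_\pm>0$; if $W(\la)=0$ then $\phi_-=c\,\phi_+$ for some $c\neq 0$, and $W(\phi_-,\overline{\phi_-})=|c|^2\,W(\phi_+,\overline{\phi_+})$ gives $2\I k_-=-2\I k_+|c|^2$, i.e.\ $k_-=-|c|^2k_+<0$, a contradiction. On the interior of $\Sigma^{(1)}$ and at $\la=\ol c$ exactly one parameter, say $k_-$ (the case $k_+$ being symmetric), is real and nonzero while the other Jost solution $\phi_+$ is real-valued; then $\{\phi_-,\overline{\phi_-}\}$ is a basis, so $\phi_+=A\phi_-+B\overline{\phi_-}$ with $W(\la)=2\I k_-B$, and reality of $\phi_+$ forces $A=\overline B$. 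Thus $B=0$ would give $\phi_+\equiv 0$, which is impossible; hence $W\neq 0$ throughout $(\ul c,+\infty)$.

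For (ii) I would first locate the zeros. If $W(\la_0)=0$ with $\la_0\in\mathcal D$, then $\phi_+(\la_0,\cdot)=c_0\,\phi_-(\la_0,\cdot)$; since $\la_0\notin[c_\pm,\infty)$ we have $\im k_\pm>0$, so this common solution decays at both $\pm\infty$ and is an $L^2(\R)$ eigenfunction. Self-adjointness of $L$ forces $\la_0\in\R$, necessarily below $\ul c$, so the zeros are exactly $\la_1,\dots,\la_p$. For the residue relation set $u=\phi_-$, $v=\phi_+$ and differentiate \eqref{Sp} in $\la$ to get $(L-\la)\dot u=u$, which yields the identities $\frac{d}{dx}W(\dot u,v)=uv$ and $\frac{d}{dx}W(u,\dot v)=-uv$. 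At $\la=\la_j$, where $\phi_+=c_j\phi_-$, the boundary terms of $W(\dot\phi_-,\phi_+)$ at $-\infty$ and of $W(\phi_-,\dot\phi_+)$ at $+\infty$ vanish because the eigenfunctions and their $\la$-derivatives decay there; integrating and adding the two pieces gives
\[
\frac{dW}{d\la}(\la_j)=\int_\R \phi_-(\la_j,x)\,\phi_+(\la_j,x)\,dx .
\]
Using $\phi_+=c_j\phi_-$ and $\phi_-=c_j^{-1}\phi_+$ this equals both $c_j(\gamma_j^-)^{-1}$ and $c_j^{-1}(\gamma_j^+)^{-1}$; multiplying the two expressions gives $\left(\frac{dW}{d\la}(\la_j)\right)^2=(\gamma_j^+\gamma_j^-)^{-1}$, which is exactly \eqref{zavis}. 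Since $\gamma_j^\pm>0$ this derivative is nonzero, so every zero is simple.

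The holomorphy, continuity, and conjugation symmetry are routine once the Jost representation is in hand. The main obstacle is twofold: the non-vanishing of $W$ on the continuous spectrum, where the simple and double spectrum pieces $\Sigma^{(1)}$ and $\Sigma^{(2)}$ must be handled separately and the reality/conjugation structure of the Jost solutions used with care; and the clean derivation of \eqref{zavis}, whose crux is justifying that the boundary contributions of the mixed Wronskians $W(\dot\phi_\mp,\phi_\pm)$ genuinely vanish at $\pm\infty$. The latter rests on the exponential decay of the eigenfunctions together with the at most polynomial-times-exponential growth of their $\la$-derivatives at $\la_j<\ul c$.
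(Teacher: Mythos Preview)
Your argument is correct and complete. The paper itself does not spell out a proof of this lemma: it simply states that items (i)--(ii) are established in \cite{bet} for $q\in\mathcal L_2^0(c_+,c_-)$ and that the same proof goes through for $q\in\mathcal L_1^0(c_+,c_-)$. What you have written is essentially the standard argument underlying that reference --- analyticity and the symmetry $W(\la^{\mathrm u})=\overline{W(\la^{\mathrm l})}$ read off from the Jost representation, non-vanishing on the continuous spectrum via the basis relations $W(\phi_\pm,\overline{\phi_\pm})=\mp 2\I k_\pm$ (treating $\Sigma^{(1)}$ and $\Sigma^{(2)}$ separately), identification of the zeros in $\mathcal D$ with the discrete spectrum by self-adjointness, and the derivation of \eqref{zavis} from the Lagrange-type identity $\tfrac{d}{dx}W(\dot\phi_\mp,\phi_\pm)=\pm\phi_-\phi_+$.

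The only place where a reader might want one more line is the vanishing of the boundary contributions of $W(\dot\phi_\mp,\phi_\pm)$ at $\mp\infty$ when $m=1$: since the eigenvalues satisfy $\la_j<\ul c$, one has $\im k_\pm(\la_j)>0$, so the factors $\E^{\pm\I k_\pm y}$ in \eqref{Jost1} decay exponentially and the $\la$-derivative $\dot\phi_\pm(\la_j,x)=\bigl(\pm\I x\,\tfrac{dk_\pm}{d\la}+\dots\bigr)\E^{\pm\I k_\pm x}$ is dominated by a polynomial times that same exponential, even without the extra moment on $K_\pm(x,\cdot)$. You already flag this point; making it explicit would remove any doubt.
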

Items (i)--(ii) are proved in \cite{bet} for $q\in\mathcal L_2^0(c_+, c_-)$, but the proof remains valid for $q\in\mathcal L_1^0(c_+, c_-)$. 
As we see, the only real value apart from the discrete spectrum where
the Wronskian can vanish, is the point $\ul c$. If $W(\ul c)=0$ we will refer to this as to the resonant case.

To study further the spectral properties of $L$, we consider the usual scattering relations
\begin{equation}\label{2.16}
T_\mp(\lambda) \phi_\pm(\lambda,x)
=\overline{\phi_\mp(\lambda,x)} + R_\mp(\lambda)\phi_\mp(\lambda,x),
\quad \mbox{as}\quad k_\pm(\la)\in\R,
\end{equation}
where the transmission and reflection coefficients are defined as usual,
\begin{equation}\label{2.17}
T_\pm(\lambda):= \frac{W(\overline{\phi_\pm(\lambda)}, \phi_\pm(\lambda))}{W(\phi_\mp(\lambda), \phi_\pm(\lambda))},\qquad
R_\pm(\lambda):= - \frac{W(\phi_\mp(\lambda),\overline{\phi_\pm(\lambda)})}{W(\phi_\mp(\lambda),\,\phi_\pm(\lambda))}, \quad k_\pm\in \R.
\end{equation}

Their  properties are  given in the following 

%-------------------%
\begin{lemma}\label{lem2.3}
Let $q\in\mathcal L_m^n(c_+, c_-)$ with $m\geq 1$, $n\geq 0$. Then
the entries of the scattering matrix possess the following properties:
\begin{enumerate}[\bf I.]
\item
\begin{enumerate}[\bf(a)]
\item
$T_\pm(\lambda+\I 0) =\overline{T_\pm(\lambda-\I 0)}$ and
$R_\pm(\lambda+\I 0) =\overline{R_\pm(\lambda-\I 0)}$ for $k_\pm(\la)\in\R$.
\item
$\dfrac{T_\pm(\lambda)}{\overline{T_\pm(\lambda)}}= R_\pm(\lambda)$
for $\lambda\in\Sigma^{(1)}$ when $c_\pm=\underline{c}$.
\item
$1 - |R_\pm(\lambda)|^2 =
\dfrac{k_\mp}{k_\pm}\,|T_\pm(\lambda)|^2$ for $\lambda\in\Sigma^{(2)}$.
\item
$\overline{R_\pm(\lambda)}T_\pm(\lambda) +
R_\mp(\lambda)\overline{T_\pm(\lambda)}=0$ for $\lambda\in \Sigma^{(2)}$.
\item
$T_\pm(\lambda) = 1 +O(\la^{-1/2})$ and
$R_\pm(\lambda) = O(\la^{-1/2})$ for $\lambda\to\infty$.
\end{enumerate}
\item
\begin{enumerate}[\bf (a)] \item
The functions $T_\pm(\lambda)$ can be analytically continued to the domain
$\mathcal D$ satisfying
\begin{equation}\label{2.18}
2\I k_+(\la) T_+^{-1}(\la)  = 2\I k_-(\la) T_-^{-1}(\la) =:W(\la),
\end{equation}
where $W(\la)$ possesses the properties {\it (i)--(ii)} from Lemma~\ref{propwronsk}.
\item If $W(\ul c)=0$ then $W(\la)=\I\gamma\sqrt{\la - \ul c}\,(1+o(1))$, where $\gamma\in\R\setminus\{0\}$.
\end{enumerate}
\item $R_\pm(\la)$ is continuous for $k_\pm(\la)\in\R$. 
%If $\ul c=c_\pm$ or $\ol c=c_\mp$, then $R_\mp(\ol c)=-1$ always, and $R_\pm(\ul c)=-1$ in the nonresonant case $W(\ul c)\neq 0$ only. 
\end{enumerate}
\end{lemma}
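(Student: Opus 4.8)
The plan is to derive every item from the integral representation \eqref{Jost1}, the reality of its kernel, the scattering relations \eqref{2.16}--\eqref{2.17}, and bilinear Wronskian bookkeeping, reserving the threshold expansion II(b) for a separate local analysis. The starting observation is the conjugation symmetry of the Jost solutions: since $K_\pm$ is real and $k_\pm(\la+\I 0)=+\sqrt{\la-c_\pm}=-k_\pm(\la-\I 0)$, representation \eqref{Jost1} gives $\overline{\phi_\pm(\la+\I 0,x)}=\phi_\pm(\la-\I 0,x)$ whenever $k_\pm\in\R$. Substituting this into the definitions \eqref{2.17} yields the Schwarz symmetry I(a) at once. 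On $\Sigma^{(1)}$ with $c_\pm=\ul c$ the opposite parameter $k_\mp$ is purely imaginary, so $\phi_\mp$ is real and $\overline{\phi_\mp}=\phi_\mp$; inserting this into \eqref{2.17} and cancelling $W(\phi_\mp,\phi_\pm)$ against its conjugate produces I(b) directly. Property III is then the remark that $R_\pm$ is a quotient of Wronskians of functions continuous up to the boundary (continuity of $\phi_\pm$) whose denominator $W(\phi_\mp,\phi_\pm)$ is nonvanishing on $(\ul c,\infty)$ by Lemma~\ref{propwronsk}(i); continuity at the endpoint $k_\pm=0$ in the resonant case will be supplied by II(b).

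Next I would evaluate the elementary Wronskians. Using bilinearity, the $x$-independence of $W$, and the value $W(\phi_\pm,\overline{\phi_\pm})=\mp 2\I k_\pm$, the definitions \eqref{2.17} collapse to $T_\pm=2\I k_\pm/W(\la)$ with $W(\la)=W(\phi_-,\phi_+)$, which is precisely the identity \eqref{2.18} of II(a); the analytic continuation of $T_\pm$ into $\mathcal D$ is then inherited from the holomorphy of $W$ there (Lemma~\ref{propwronsk}) together with the analyticity of $k_\pm$ on $\mathcal D$. For I(c) and I(d) I would work on $\Sigma^{(2)}$, where both $k_\pm$ are real and both scattering relations in \eqref{2.16} hold simultaneously. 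Pairing each relation in the Wronskian against $\phi_\mp$ and against $\overline{\phi_\mp}$, and reducing the resulting Wronskians through the known values together with $W(\overline{\phi_-},\overline{\phi_+})=\overline{W(\la)}$ (a consequence of I(a)), delivers two algebraic identities: the unitarity combination gives the flux relation I(c), and the cross combination gives I(d).

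The high-energy behaviour I(e) follows from the bound \eqref{zeros}, or equivalently from the form \eqref{Jost11} and the Riemann--Lebesgue lemma: as $\la\to\infty$ one has $\phi_\pm\to\E^{\pm\I k_\pm x}$, so that $T_\pm\to 1$ and $R_\pm\to 0$, while the finiteness of the first moment in \eqref{decay} sharpens this to the rate $O(\la^{-1/2})$.

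The delicate point is II(b), the resonant expansion at the lower edge $\ul c$. Writing $k:=\sqrt{\la-\ul c}$ for the parameter attached to the smaller threshold and noting that the corresponding Jost solution is analytic in $k$ near $0$ while the opposite parameter remains analytic in $\la$, I would first argue that $W$ is a genuinely analytic function of $k$ in a neighbourhood of $k=0$. Under the hypothesis $W(\ul c)=0$ the expansion then reads $W=\alpha k(1+o(1))$; the symmetry $W(\la+\I 0)=\overline{W(\la-\I 0)}$ from Lemma~\ref{propwronsk}(i), combined with $k(\la-\I 0)=-k(\la+\I 0)$, forces $\ov\alpha=-\alpha$, i.e.\ $\alpha=\I\gamma$ with $\gamma\in\R$. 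Finally the simplicity of the threshold zero—which I would establish through a Wronskian identity for $\tfrac{dW}{dk}(0)$ at the threshold, in the spirit of \eqref{zavis}—gives $\gamma\neq 0$. I expect this step to be the main obstacle: the whole difficulty lies in upgrading analyticity in $\la$ to analyticity in $k$ at the band edge and in pinning down the character and nonvanishing of the leading coefficient, whereas all the remaining items are short consequences of the Wronskian identities recorded above.
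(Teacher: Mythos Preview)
Your treatment of I(a)--(e), II(a) and the off-endpoint part of III is fine and matches the standard Wronskian bookkeeping (which the paper simply cites from \cite{bet}). The gap is in your approach to II(b).

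You write that you would ``argue that $W$ is a genuinely analytic function of $k$ in a neighbourhood of $k=0$'' and then read off the expansion $W=\alpha k(1+o(1))$ and the reality of $\gamma$ from symmetry. But under the hypothesis $m\ge 1$ the Jost solution attached to the lower threshold is only \emph{continuous} in $k$ up to the real line (cf.\ the Corollary after \eqref{K2}: $\phi_\pm$ is $m-1$ times differentiable in $k_\pm$, so for $m=1$ you get no derivative at all). Consequently $W$ is holomorphic on $\mathcal D$ and continuous on $\Sigma$, but there is no reason for it to be analytic in $k$ across $k=0$, and your Taylor-type argument---as well as the ``simplicity of the threshold zero via a Wronskian identity for $\tfrac{dW}{dk}(0)$''---has no footing when $m=1$. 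This is precisely why the paper singles II(b) out as the hard step.

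The paper's route is entirely different. It first derives the GLM equations and proves property~IV ($F_\pm'\in\mathcal L_m^n(\R_\pm)$), and only then returns to II(b) in Subsection~\ref{march}. There the key device is to write $h_-(\la)h_-'(\ul c)-h_-(\ul c)h_-'(\la)=2\I k_-\Psi(k_-)$ with $\Psi(k_-)=\int_{\R_-}H(y)\E^{-2\I yk_-}dy$, and to show $H\in L^1(\R_-)$ by feeding the GLM equation and the just-established bound $F_-'\in\mathcal L_1^0(\R_-)$ into an integral equation for $H$. This produces the leading order $W(\la)=\I\gamma\sqrt{\la-\ul c}(1+o(1))$ with $\gamma\in\R$ without ever differentiating in $k$; the nonvanishing $\gamma\neq 0$ comes not from a derivative identity but from the a priori bound of Lemma~\ref{lemweak}. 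The continuity of $R_\pm$ at $k_\pm=0$ in the resonant case is then obtained from the resulting local asymptotics (Lemma~\ref{lemg}). Your plan would work for $m\ge 2$ (where one $k$-derivative exists), but for the stated range $m\ge 1$ you need this GLM-based argument or something equivalent.
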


\begin{proof}
Properties {\bf I.\ (a)--(e)}, {\bf II.\ (a)} are proved in \cite{bet} for $m=2$, and the proof remains valid for $m=1$. Property {\bf III} is evidently valid for $k_\pm\neq 0$ by  \eqref{2.17}, the continuity of the Jost solutions, and the absence of resonances.  Since $W(\ol c)\neq 0$ by Lemma \ref{propwronsk}, it remains to establish that in the case $\ul c=c_\pm$ the function $R_\pm$ is continuous as $k_\pm\to 0$. Since $\ol{\phi_\pm(c_\pm,x)}=
\phi_\pm(c_\pm,x)$, the property
\beq\label{minus1}
R_\pm(c_\pm)=-1\quad \mbox{if } \ W(c_\pm)\neq 0,
\eeq
follows immediately from \eqref{2.17}. In the resonant case, the proof of {\bf II.\ (b)} will be deferred to Subsection~\ref{march}.            
\end{proof}

Since we have deferred the proof of {\bf II.\ (b)}, we will not use it until then. However, we will need the following weakened version of property {\bf II.\ (b)}.

\begin{lemma}\label{lemweak}
If $W(\underline{c})=0$ then, in a vicinity of point $\ul c$, the Wronskian admits the estimates
\beq \label{musk1}
 W^{-1}(\la)=\left\{\begin{array}{ll}O\left((\la - \ul c)^{-1/2}\right) & \mbox{for}\  \ \la\in \Sigma,\\
 O\left((\la - \underline{c})^{-1/2 -\delta}\right)& \mbox{for}\  \la\in \C\setminus\Sigma,\end{array}
 \right.
\eeq
where $\delta>0$ is an arbitrary small number.
\end{lemma}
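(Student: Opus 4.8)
The plan is to pass to the uniformizing variable and split the bound into a sharp estimate on the cut and a soft propagation into the domain. Assume $c_+=\ul c$ (the case $c_-=\ul c$ is symmetric) and set $k:=k_+=\sqrt{\la-\ul c}$, so that $\la\mapsto k$ maps $\mathcal D$ near $\ul c$ onto $\{\im k>0\}$ and $\Sigma$ onto a real interval, while $k_-(\la)$ stays analytic and nonvanishing at $\ul c$ because $\ul c<\ol c=c_-$. Put $\wti W(k):=W(\ul c+k^2)$; by Lemma~\ref{propwronsk} it is holomorphic for $\im k>0$, continuous up to $\R$, vanishes at $k=0$ (the resonant hypothesis) and is zero-free in a punctured half-neighbourhood of the origin. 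Via \eqref{2.18} the two claims are equivalent to $T_+$ being bounded on $\Sigma$ near $\ul c$ (the sharp exponent $-1/2$) and to $T_+=O(|k|^{-2\delta})$ on $\mathcal D$ (the lossy exponent $-1/2-\delta$).

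First I would treat the cut. Since $W(\ul c)=0$, the edge Jost solutions are proportional, $\phi_-(\ul c,\cdot)=\ga\,\phi_+(\ul c,\cdot)$ with $\ga\in\R\setminus\{0\}$, and $\phi_+(\ul c,\cdot)$ is the half-bound state (decaying like $e^{\sqrt{\ol c-\ul c}\,x}$ at $-\infty$, tending to $1$ at $+\infty$). On $\Sigma^{(1)}$ the parameter $k_-$ is purely imaginary, so $\phi_-(\la,\cdot)$ is real and, by \eqref{2.17} with property~I.(b) of Lemma~\ref{lem2.3}, $\phi_-=T_+^{-1}\ov{\phi_+}+\ov{T_+}^{\,-1}\phi_+=2\re\bigl(\beta\,\ov{\phi_+}\bigr)$, where $\beta:=T_+^{-1}$. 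Using \eqref{2.18} this gives $\im W(\la+\I0)=2k\,\re\beta$. Because $\phi_-(\la,\cdot)$ extends analytically in $\la$ across $\ul c$ while $\phi_+(\la,\cdot)\to\phi_+(\ul c,\cdot)$ as $k\to0$, a limiting argument (made uniform below) forces $2\re\beta\to\ga$ at the edge; hence $|W|\ge|\im W|\ge c\,k$ on $\Sigma$, that is $W^{-1}=O((\la-\ul c)^{-1/2})$.

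For the domain I would propagate this boundary bound by a two–constants (Phragm\'en--Lindel\"of) argument applied to $h(k):=\wti W(k)/k$, holomorphic and zero-free on a half-disc $\{0<|k|<r,\ \im k>0\}$. By the previous step $|h|\ge c_0>0$ on the real segment, and $|h|\ge c_1>0$ on the circular arc by continuity and the absence of zeros of $W$ away from $\ul c$. The only exceptional boundary point is $k=0$, where $h$ obeys an a priori finite-order bound since $W$ is bounded near $\ul c$ (the Jost solutions are bounded by \eqref{zeros}, \eqref{K1}) and $W\ne0$ in $\mathcal D$. As a single boundary point carries zero harmonic measure, the theorem yields $|h(k)|\ge c_\delta|k|^{2\delta}$ throughout the half-disc for every $\delta>0$, whence $|\wti W(k)|\ge c_\delta|k|^{1+2\delta}$ and $W^{-1}=O((\la-\ul c)^{-1/2-\delta})$ on $\C\setminus\Sigma$; the arbitrarily small loss $\delta$ is precisely the price of not controlling $W$ at the branch point itself.

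The \emph{hard part} is the sharp bound on the cut in the least regular class $m=1$. There the Corollary containing \eqref{refff} provides only $m-1=0$ derivatives of $\phi_+$ in $k_+$, so $\wti W$ is merely continuous at $k=0$ and one cannot Taylor-expand to read off the linear coefficient $\wti W'(0)=\I\ga$ (which for $m\ge2$ follows at once and yields property~II.(b) directly). The delicate point is that the passage $2\re(\beta\,\ov{\phi_+})\to\ga\,\phi_+(\ul c,\cdot)$ must be justified uniformly up to the oscillation scale $x\sim1/k$, where the left exponential rate $\ka_-(\la)=\sqrt{\ol c-\la}$ degenerates to $\ka_0=\sqrt{\ol c-\ul c}$ and produces compensating contributions; one must show that the amplitude of the analytic-in-$\la$ solution $\phi_-(\la,\cdot)$ stabilises to the nonzero half-bound-state constant $|\ga|$ rather than collapsing. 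I would carry this out through the integral equation \eqref{Bint} and the one-sided bounds \eqref{K1}, \eqref{zeros}, which require only the first moment and hence persist for $m=1$, thereby keeping $\re\beta=\re(T_+^{-1})$ bounded away from zero without invoking the deferred property~II.(b).
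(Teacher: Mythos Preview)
Both steps diverge from the paper and the second contains a genuine gap.

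\emph{On the cut.} Your detour through $\im W=2k\,\re\beta$ and the claimed limit $2\re\beta\to\ga$ is unnecessary and, as you yourself flag, delicate for $m=1$: resolving that limit amounts to controlling $(\im\phi_+)/k$ as $k\to0$, i.e.\ to integrating $y\,K_+(x,y)$, which by \eqref{refff} requires $K_+(x,\cdot)\in\mathcal L^0_1$ and hence $m\ge2$. But your own identity $\phi_-=2\re(\beta\,\ol{\phi_+})$ already yields $|\phi_-|\le 2|\beta|\,|\phi_+|$, so $|T_+^{-1}|=|\beta|\ge |\phi_-(\la,x_0)|/(2|\phi_+(\la,x_0)|)\ge c>0$ by continuity alone, with no limiting argument. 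This is precisely the paper's proof, which reads
\[
|T_\mp(\la)|=\frac{|R_\mp(\la)\phi_\mp(\la,x_0)+\ol{\phi_\mp(\la,x_0)}|}{|\phi_\pm(\la,x_0)|}\le C
\]
straight from \eqref{2.16}, using only $|R_\mp|\le1$ and continuity of the Jost solutions --- no derivatives, no appeal to the deferred {\bf II.(b)}.

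\emph{Off the cut.} The Phragm\'en--Lindel\"of step is circular. Lindel\"of's theorem lets you discard a boundary point of zero harmonic measure only once the subharmonic function $u=\log|h|^{-1}$ is known a priori to be bounded above on the half-disc; that is exactly the lower bound on $|W|$ you are trying to establish. Your ``finite-order bound'' is on the wrong side: $|W|\le C$ gives $|h|\le C/|k|$, an \emph{upper} bound on $h$, whereas what is needed is a \emph{lower} bound. Nothing in the hypotheses excludes $|W(k)|$ vanishing faster than any power of $|k|$ as $k\to0$ non-tangentially ($W$ is merely continuous up to $\R$), and the harmonic function $\log(1/|k|)$ on the half-disc shows the exceptional point cannot be waived for free. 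The paper supplies the missing global control structurally: the diagonal Green's function $G(\la,x_0,x_0)=\phi_+\phi_-/W$ is Herglotz--Nevanlinna, hence admits a Stieltjes representation whose local piece near $\ul c$ is a Cauchy integral with density $\im G(\xi+\I0,x_0,x_0)=O((\xi-\ul c)^{-1/2})$ by the cut estimate. The standard mapping properties of Cauchy-type integrals with weakly singular densities (Muskhelishvili \cite{Musch}) then give $G=O((\la-\ul c)^{-1/2-\delta})$ off the cut, and since $\phi_+\phi_-$ is bounded and nonzero at $(\ul c,x_0)$, the bound on $W^{-1}$ follows.
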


\begin{proof}
We give the proof for the case $c_-=\ul c$, $c_+=\ol c$. The other case is analogous. In this case the point $k_-=0$ corresponds to the point $\la=\ul c$. To study the Wronskian, we use \eqref{2.18} for $T_-(\la)$.
 First we prove that $T_-$ is bounded
on the set $V_\varepsilon:\{\la(k_-):-\varepsilon<k_-<\varepsilon\}$ for some  $\varepsilon >0$.
Indeed, due to the continuity of $\phi_+(\la,x)$ with respect to both variables, we can choose a point $x_0$ such that $\phi_+(\ul c, x_0)\neq 0$, respectively $|\phi_+(\la,x_0)|> \frac{1}{2}|\phi_+(\ul c,x_0)|>0$ in $V_\varepsilon$ for sufficiently small $\varepsilon$. Then by \eqref{2.16},
\[|T_-(\la)|=\frac{|R_-(\la)\phi_-(\la,x_0) +\ol{\phi_-(\la,x_0)}|}{|\phi_+(\la,x_0)|}\leq C,\quad \la\in V_\varepsilon.\]
Thus, for real $\la$ near $\ul c$ we have $W^{-1}(\la)=O((\la -\ul c)^{-1/2})$.
For non real $\la$ we use the fact that the diagonal of the kernel of the resolvent $(L-\la I)^{-1}$
\[
G(\la,x,x)=\frac{\phi_+(\la,x)\phi_-(\la,x)}{W(\la)},\ \la\in\mathcal D\setminus\Sigma_d,
\]
is a Herglotz--Nevanlinna function (cf.\ \cite{tschroe}, Lemma~9.22). Hence by virtue of Stieltjes inversion formula (\cite{tschroe}, Theorem~3.22)
it can be represented as
\[
G(\la,x_0,x_0)=\int_{\ul c}^{\ul c + \varepsilon^2} \frac{\im G(\xi+\I 0,x_0,x_0)}{\xi -\la}d\xi + G_1(\la),
\]
where $G_1(\la)$ is a bounded in a vicinity of $\ul c$. But $G(\xi+\I 0,x_0,x_0)=O((\xi -\ul c)^{-1/2})$ and by \cite[Chap.~22]{Musch} we get \eqref{musk1}.
 \end{proof}

In what follows we set $\kappa_j^\pm:=\sqrt{c_\pm -\la_j}$ such that $\I\kappa_j^\pm$ is the image of the eigenvalue $\la_j$ under the map $k_\pm$. Then we have the following
\begin{remark}
For the function $T_\pm(\la)$, regarded as a function of variable $k_\pm$,
\beq\label{rest}
\res_{\I\kappa_j^\pm} T_\pm(\la)=\I(\mu_j)^{\pm 1}\gamma_j^\pm,\ \ \text{where}
\ \ \phi_+(\la_j,x)=\mu_j\phi_-(\la_j,x).
\eeq
\end{remark}

\subsection{The Gelfand--Levitan--Marchenko equations}

Our next aim is to derive the Gelfand--Levitan--Marchenko equations. 
In addition to  {\bf I.\ (e)}, we will need another property of the reflection coefficients.

\begin{lemma}\label{rem7}
Let $q\in\mathcal L_1^0(c_+, c_-)$. Then the reflection coefficient $R_\pm(\la)$ regarded as a function of
$k_\pm\in\R$ belongs to the space $L^1(\R)=L^1_{\{k_\pm\}}(\R)$.
\end{lemma}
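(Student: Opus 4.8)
The plan is to split the real $k_\pm$-axis into a bounded piece $|k_\pm|\le k_0$ and a tail $|k_\pm|\ge k_0$, and to attack the two regimes by entirely different means: on the bounded piece I would only use soft, qualitative information (boundedness and continuity of $R_\pm$), while on the tail I would extract a quantitative Fourier/Born-type representation of $R_\pm$ from the Deift--Trubowitz form \eqref{Jost11}. Since local integrability will be immediate, the whole content of the lemma is the decay of $R_\pm$ as $k_\pm\to\pm\infty$.

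On the bounded piece the argument is purely qualitative. For $\la\in\Sigma^{(2)}$ property {\bf I.\ (c)} gives $|R_\pm(\la)|^2=1-\tfrac{k_\mp}{k_\pm}|T_\pm(\la)|^2\le 1$, while on $\Sigma^{(1)}$ (the case $c_\pm=\ul c$) property {\bf I.\ (b)} gives $|R_\pm(\la)|=1$; hence $|R_\pm|\le 1$ everywhere on the real $k_\pm$-axis. Combined with the continuity of $R_\pm$ for $k_\pm\neq 0$ from Lemma~\ref{lem2.3}~{\bf III} (and the explicit value \eqref{minus1} in the non-resonant case), this shows $R_\pm$ is bounded and measurable up to $k_\pm=0$, so $\int_{|k_\pm|\le k_0}|R_\pm|\,dk_\pm<\infty$ for every fixed $k_0$.

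For the tail I would start from $R_\pm=-W(\phi_\mp,\overline{\phi_\pm})/W(\la)$ in \eqref{2.17}, use \eqref{2.18} together with {\bf I.\ (e)} so that $W(\la)=2\I k_\pm\bigl(1+o(1)\bigr)$, and then evaluate the numerator $W(\phi_\mp,\overline{\phi_\pm})$ at a fixed point, say $x=0$, inserting \eqref{Jost11}. A short computation should yield
\[
R_\pm(k_\pm)=\frac{1}{2\I k_\pm}\,\widehat{G_\pm}(2k_\pm)\bigl(1+o(1)\bigr)+O\!\left(k_\pm^{-2}\right),\qquad k_\pm\to\pm\infty,
\]
where $\widehat{G_\pm}$ denotes the Fourier transform of a function $G_\pm$ assembled linearly from $q_\pm$ and from $\partial_x B_\pm(0,\cdot)$; by \eqref{der1} the latter is $\mp q_\pm(y)-\int_0^{y}q_\pm(s)B_\pm(s,y-s)\,ds$. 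By the corollary \eqref{refff} (with $m=1$) one has $B_\pm(0,\cdot),\,\partial_x B_\pm(0,\cdot)\in\mathcal L^0_0(\R_\pm)=L^1$, so $G_\pm\in L^1$ (a convolution of $L^1$-functions is $L^1$), $\widehat{G_\pm}$ is continuous and $\widehat{G_\pm}(2k_\pm)\to 0$ by Riemann--Lebesgue, and the $O(k_\pm^{-2})$ remainder is integrable at infinity.

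The crux, and the step I expect to be the main obstacle, is then to show
\[
\int_{|k_\pm|\ge k_0}\frac{\bigl|\widehat{G_\pm}(2k_\pm)\bigr|}{|k_\pm|}\,dk_\pm<\infty .
\]
Here the bare Riemann--Lebesgue decay $\widehat{G_\pm}(2k)=o(1)$ is useless, because $1/|k|$ is not integrable at infinity, so the first moment of $q_\pm$ must enter in an essential way. This is precisely the point at which $m=1$ is more delicate than $m\ge 2$: for a second moment the estimates \eqref{K1}--\eqref{K2} transfer an extra moment onto $G_\pm$, which permits one further integration by parts and a clean $O(k^{-2})$ bound on the principal term, whereas for $m=1$ no such integration by parts is available and one must instead estimate the weighted integral $\int |\widehat{G_\pm}(2k)|\,|k|^{-1}\,dk$ itself, rather than $\widehat{G_\pm}$ pointwise. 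My intended route, following Marchenko~\cite{M}, is to feed the integral equation \eqref{Bint} and the one-sided moment bounds encoded in $\sigma_{\pm,0}$, $\hat\sigma_{\pm,0}$ into a modulus-of-continuity (Dini-type) estimate for $\widehat{G_\pm}$ at infinity; controlling this quantity is where the finite first moment is genuinely used, and it is the real heart of the proof. Once this tail bound is secured, adding it to the bounded-part estimate gives $R_\pm\in L^1(\R)$.
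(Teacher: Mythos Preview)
Your overall architecture matches the paper's: both split into $|k_\pm|\le k_0$ (disposed of by the uniform bound $|R_\pm|\le1$ from {\bf I.(b),(c)}) and a tail $|k_\pm|\ge k_0$ (handled by evaluating the Wronskian quotient \eqref{2.17} at $x=0$ and extracting a factor $1/k_\pm$). The two proposals diverge at exactly the step you flag as the crux.

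You reduce correctly to $\int_{|k_\pm|\ge k_0}\bigl|\widehat{G_\pm}(2k_\pm)\bigr|\,|k_\pm|^{-1}\,dk_\pm<\infty$ for an $L^1$ function $G_\pm$ built from the transformation-operator data, and you rightly observe that Riemann--Lebesgue alone is useless here. But the ``modulus-of-continuity (Dini-type)'' route you announce is never carried out: you do not say which inequality would be invoked, nor how the first moment feeds into it to produce precisely this weighted-$L^1$ bound (as opposed to, say, $\widehat{G_\pm}\in L^1$, which is a different and stronger statement). This is a genuine gap---you have named the right target but not supplied a mechanism to hit it.

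The paper closes this gap by a much shorter device that avoids any pointwise decay estimate for $\widehat{G_\pm}$. It writes $\phi_\pm(\la,0)=1+f_{3,\pm}$, $\phi_\pm'(\la,0)=\pm\I k_\pm\,\phi_\pm(\la,0)+f_{4,\pm}$ (and the conjugate relations), where each $f_{s,\pm}(k_\pm)$ is the Fourier transform of a function in $L^1\cap L^2$, namely $K_\pm(0,\cdot)$ and combinations of its first partials controlled via \eqref{K1}--\eqref{K2} and \eqref{defsig}. By Riemann--Lebesgue and Plancherel each $f_{s,\pm}\in C_0\cap L^2$, and the class is stable under products (convolution preserves $L^1\cap L^2$). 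Combined with $k_\pm-k_\mp=O(k_\pm^{-1})$ from \eqref{kapm}, this gives $W(\phi_\mp,\overline{\phi_\pm})=f_{5,\pm}$ of the same type, hence $R_\pm=f_{6,\pm}\,k_\pm^{-1}$ with $f_{6,\pm}\in L^2$ for large $|k_\pm|$. A single Cauchy--Schwarz step,
\[
\int_{|k_\pm|\ge k_0}\frac{|f_{6,\pm}|}{|k_\pm|}\,dk_\pm\le\|f_{6,\pm}\|_{L^2}\,\Bigl(\int_{|k_\pm|\ge k_0}k_\pm^{-2}\,dk_\pm\Bigr)^{1/2}<\infty,
\]
then finishes the proof. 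The first moment enters only through \eqref{decay6}: it is what puts $\sigma_{\pm,0}\in L^1(\R_\pm)$, and since $\sigma_{\pm,0}$ is bounded one gets $\sigma_{\pm,0}\in L^2$, which is the source of the $L^1\cap L^2$ control on the kernels. So your instinct that the first moment is essential at this point is correct, but the mechanism the paper uses is $L^2$/Plancherel/Cauchy--Schwarz, not a Dini condition.
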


\begin{proof} 
Throughout this proof we will denote by $f_{s,\pm}:=f_{s,\pm}(k_\pm)$, $s=1,2,\dots$, functions whose Fourier transforms are in
$L^1(\R)\cap L^2(\R)$ (with respect to $k_\pm$). Note that $f_{s,\pm}$ are continuous.
Moreover, a function $f_{s,\pm}$ is continuous with respect to 
$k_\mp$ for $k_\mp=k_\mp(\la)$ with $\la\in\Sigma^{(2)}$, and  $f_{s,\pm}\in L^2_{\{k_\mp\}}(\R\setminus(-a,a))$, where the set $\R\setminus (-a,a)$ is the image of the spectrum $\Sigma^{(2)}$ under the map $k_\mp(\la)$.

Denote by a prime the derivative with respect to $x$. Then \eqref{Jost1}--\eqref{K2} and \eqref{defsig} imply
\[
\overline{\phi_\pm(\la,0)}= 1 + f_{1,\pm},\quad
\overline{\phi_\pm^\prime(\la,0)}=\mp\I k_\pm\,\overline{\phi_\pm(\la,0)} +f_{2,\pm},
\]
\[
\phi_\pm(\la,0)=1 +f_{3,\pm},\quad
\phi^\prime_\pm(\la,0)=\pm\I k_\pm\,\phi_\pm(\la,0) +f_{4,\pm}.
\]
Since
\beq\label{kapm}
k_\pm- k_\mp=\frac{c_\mp - c_\pm}{2 k_\pm}(1+o(1)) \ \mbox{ as }\ \ |k_\pm|\to\infty,
\eeq
then $W(\phi_\mp(\la),\ol{\phi_\pm(\la)})=f_{5,\pm}$ for large $k_\pm$.
By the same reason
\[
W(\la)=2\I\sqrt\la(1+o(1))\ \ \mbox{as}\ \ \la\to\infty.
\]
Remember that the reflection coefficient is a bounded function with respect to $k_\pm\in\R$ by {\bf I.\ (b), (c)}.
Moreover, for $|k_\pm|\gg 1$ it admits the representation $R_\pm(\la)=f_{6,\pm} k_\pm^{-1}$. This finishes the proof.
\end{proof}

%-------------------%
\begin{lemma}
Let $q\in\mathcal L_1^0(c_+, c_-)$. Then the kernels of the transformation operators $K_\pm(x,y)$
satisfy the integral equations
\begin{equation}\label{ME}
K_\pm(x,y) + F_\pm(x+y) \pm \int_x^{\pm\infty} K_\pm(x,s) F_\pm(s+y)d s =0,
\quad \pm y>\pm x,
\end{equation}
where 
\begin{align}\label{lemF}
F_\pm(x)
=& \frac{1}{2\pi}\int_\R R_\pm(\la) e^{\pm\I k_\pm x}dk_\pm + \sum_{j=1}^p \ga_{j}^\pm \E^{\mp\kappa_j^\pm x}\\ \nn
& + \begin{cases}
 \frac{1}{4\pi}\int_{\underline{c}}^{\overline{c}} |T_\mp(\la)|^2\,|k_\mp|^{-1}\E^{\pm \I k_\pm x}d\la, & c_\pm=\ol c,\\
 0, & c_\pm=\ul c.
 \end{cases}
\end{align}
\end{lemma}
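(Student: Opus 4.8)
The plan is to derive the upper-sign (``$+$'') equation in detail; the ``$-$'' equation then follows by the symmetry $+\leftrightarrow-$, $c_+\leftrightarrow c_-$, $\ol c\leftrightarrow\ul c$. I would start from the bottom-sign scattering relation in \eqref{2.16}, namely $T_+(\la)\phi_-(\la,x)=\ol{\phi_+(\la,x)}+R_+(\la)\phi_+(\la,x)$ for $k_+\in\R$. Isolating $\ol{\phi_+}$ and inserting the Jost representation \eqref{Jost1} (for which $\ol{\phi_+(\la,x)}=\E^{-\I k_+x}+\int_x^\infty K_+(x,s)\E^{-\I k_+s}\,ds$, since $K_+$ is real), I obtain
\[
\int_x^\infty K_+(x,s)\E^{-\I k_+s}\,ds=\bigl[T_+(\la)\phi_-(\la,x)-\E^{-\I k_+x}\bigr]-R_+(\la)\phi_+(\la,x).
\]
Multiplying by $\frac{1}{2\pi}\E^{\I k_+y}$ and integrating $dk_+$ over $\R$ for fixed $y>x$, the left-hand side collapses to $K_+(x,y)$ by Fourier inversion; this is legitimate because $K_+(x,\cdot)\in L^1\cap L^2$ by \eqref{refff}.

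The reflection term is routine and mimics the decaying case. Expanding $\phi_+$ on the right via \eqref{Jost1}, the term $-R_+\phi_+$ produces, after interchanging the order of integration (justified by $R_+\in L^1$ from Lemma~\ref{rem7} together with $K_+(x,\cdot)\in L^1$),
\[
-F_+^{\mathrm r}(x+y)-\int_x^\infty K_+(x,s)F_+^{\mathrm r}(s+y)\,ds,\qquad F_+^{\mathrm r}(z):=\frac{1}{2\pi}\int_\R R_+(\la)\E^{\I k_+z}\,dk_+,
\]
which is the first (reflection) summand of $F_+$ in \eqref{lemF}.

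The crux is the evaluation of $\frac{1}{2\pi}\int_\R\bigl[T_+\phi_--\E^{-\I k_+x}\bigr]\E^{\I k_+y}\,dk_+$ by deforming the contour into the upper half-plane $\C^+$ of $k_+$ (which corresponds to $\mathcal D$). Using \eqref{kapm} one checks $T_+\phi_-\to\E^{-\I k_-x}$, so the bracket is $O(\E^{-\I k_+x}/k_+)$ as $|k_+|\to\infty$ and $\E^{\I k_+y}$ times the bracket decays for $y>x$, making the large semicircle drop out. The function $T_+\phi_-$ is meromorphic in $\mathcal D$ with simple poles at $k_+=\I\kappa_j^+$ (the zeros of $W$), and -- when $c_+=\ol c$ -- the image of $\mathcal D$ under $k_+$ is $\C^+$ slit along $(0,\I\sqrt{\ol c-\ul c}\,]$, the preimage of the simple band $\Sigma^{(1)}=[\ul c,\ol c]$, across which $\phi_-$ (hence $T_+\phi_-$) jumps because $k_-(\la\pm\I0)=\pm\sqrt{\la-\ul c}$. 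Thus the contour integral equals the sum of residues plus a loop around this slit. The residues, evaluated with \eqref{rest} and $\phi_+(\la_j,x)=\mu_j\phi_-(\la_j,x)$, give $-\sum_j\gamma_j^+\phi_+(\la_j,x)\E^{-\kappa_j^+y}$, i.e. the discrete summand of $F_+$ after expanding $\phi_+(\la_j,\cdot)$ by \eqref{Jost1}. For the slit I would use $\phi_-(\la-\I0,x)=\ol{\phi_-(\la+\I0,x)}$, $T_+(\la-\I0)=\ol{T_+(\la+\I0)}$ (from $W(\la+\I0)=\ol{W(\la-\I0)}$ and $k_+$ being imaginary there) and $T_+=\tfrac{k_+}{k_-}T_-$ from \eqref{2.18} to write the jump as $2\I\,\im(T_+\phi_-)=\I\frac{\sqrt{\ol c-\la}}{|k_-|}\,2\re(T_-\phi_-)$; on $\Sigma^{(1)}$ the solution $\phi_+$ is real, $|R_-|=1$ by {\bf I.\ (b)}, and the top-sign relation \eqref{2.16} then yields the clean identity $2\re(T_-\phi_-)=|T_-|^2\phi_+$. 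Converting $dk_+$ to $d\la$ along the slit turns the loop into $-\frac{1}{4\pi}\int_{\ul c}^{\ol c}|T_-|^2|k_-|^{-1}\phi_+(\la,x)\E^{\I k_+y}\,d\la$, which is exactly the third summand of $F_+$ after expanding $\phi_+(\la,\cdot)$ by \eqref{Jost1}. When $c_+=\ul c$ the map $k_+$ sends $\mathcal D$ onto all of $\C^+$ with no slit, so this term is absent, matching \eqref{lemF}.

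Collecting the three pieces and the $-F_+^{\mathrm r}$ contribution, and expanding every occurrence of $\phi_+$ by \eqref{Jost1}, yields $K_+(x,y)=-F_+(x+y)-\int_x^\infty K_+(x,s)F_+(s+y)\,ds$, i.e. \eqref{ME}. I expect the main obstacle to be the slit analysis: correctly orienting the keyhole contour to fix signs, identifying the jump of $T_+\phi_-$ and reducing it via the $\Sigma^{(1)}$ scattering relations to the symmetric kernel $|T_-|^2|k_-|^{-1}\phi_+$, and securing integrability of this kernel at the band edge $\ul c$ -- which in the resonant case is precisely where the first-moment hypothesis is tight and where I would invoke Lemma~\ref{lemweak} to bound $W^{-1}=O((\la-\ul c)^{-1/2})$ and thereby $|T_-|^2|k_-|^{-1}=O((\la-\ul c)^{-1/2})$.
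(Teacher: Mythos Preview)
Your proposal is correct and follows essentially the same route as the paper: both evaluate $\frac{1}{2\pi}\int_\R\bigl(T_\pm\phi_\mp-\E^{\mp\I k_\pm x}\bigr)\E^{\pm\I k_\pm y}\,dk_\pm$ in two ways---once via the scattering relation (producing the reflection and $K_\pm$ terms) and once by contour deformation into $\C^+$ (producing the residue/discrete-spectrum part and, when $c_\pm=\ol c$, the slit integral over $\Sigma^{(1)}$)---with Lemma~\ref{lemweak} securing integrability at the band edge in the resonant case. The only cosmetic difference is in reducing the jump across the slit: the paper uses $T_\pm\ol{T_\pm^{-1}}=-R_\mp$ and the $\mp$ scattering relation to obtain $[T_+\phi_-]=-\ol{T_+}T_-\phi_+$ directly, whereas you pass through $T_+=\tfrac{k_+}{k_-}T_-$ and the identity $2\re(T_-\phi_-)=|T_-|^2\phi_+$; these manipulations are equivalent and yield the same kernel $|T_\mp|^2|k_\mp|^{-1}$.
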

%-------------------%

%-------------------%
\begin{proof}
To derive the GLM equations, we introduce two functions
\[ G_\pm(\la,x,y) =
\left(T_\pm(\la) \phi_\mp(\la,x)-\E^{\mp \I k_\pm x}\right) \E^{\pm\I k_{\pm}y},\quad \pm y>\pm x,
\]
where $x,y$ are considered as parameters.  As a function of $\la$, both functions are
meromorphic in the domain $\mathcal D$ with simple poles
at the points $\lambda_j$ of the discrete spectrum. By property {\bf II}, they are continuous up
to the boundary $\Sigma^{\mathrm{u}}\cup\Sigma^{\mathrm{l}}$, except at the point $\ul c$, where one of these functions ($G_\mp(\la,x,y)$ for $\ul c=c_\pm$) can have a singularity of order $O((\la - \ul c)^{-1/2-\delta})$ in the resonant case by Lemma \ref{lemweak}.

By the scattering relations,
\begin{align*}
T_\pm(\la) \phi_\mp(\la,x)-\E^{\mp \I k_\pm x}&=R_\pm(\la)\phi_\pm(\la,x) + (\ol {\phi_\pm(\la,x)} - \E^{\mp \I k_\pm x})\\
&=S_{\pm,1}(\la,x) +
S_{\pm,2}(\la,x).
\end{align*}
It follows from \eqref{Jost1} that
\[
\frac{1}{2\pi}\int_{\R} S_{\pm,2}(\la,x)\E^{\pm\I k_{\pm}y}dk_\pm=K_\pm(x,y).
\]
Next, according to Lemma \ref{rem7} and \eqref{refff}, we obtain
\[
R_\pm(\la) K_\pm(x,s)\E^{\I k_\pm(y+s)}\in L^1_{\{k_\pm\}}(\R)\times L^1_{\{s\}}([x,\pm\infty))\quad\mbox{for } x,y\mbox{ fixed}.
\]
Using again \eqref{Jost1} and Fubini's theorem, we get
\begin{align*}
& \frac{1}{2\pi}\int_{\R} S_{\pm,1}(\la)\E^{\pm\I k_\pm y}dk_\pm  =\\
&\qquad = F_{r,\pm}(x+y) \pm \frac{1}{2\pi}\int_{\R}\int_x^{\pm\infty} K_\pm(x,s)R_\pm(\la)\E^{\pm\I k_\pm(y+s)}ds\, dk_\pm\\
&\qquad = F_{r,\pm}(x+y) \pm\int_x^{\pm\infty}K_\pm(x,s)F_{r,\pm}(y+s)ds,
\end{align*}
where we have set ($r$ for "reflection")
\beq\label{hatr}F_{r,\pm}(x):=\frac{1}{2\pi}\int_{\R} R_\pm(\la)\E^{\pm\I k_\pm x}dk_\pm.\eeq
Thus, for $\pm y>\pm x$,
\beq\label{int00}
\frac{1}{2\pi} \int_{\R}G_\pm(\la,x,y)dk_\pm=K_\pm(x,y) + F_{r,\pm}(x+y)\pm\int_x^{\pm\infty}K_\pm(x,s)F_{r,\pm}(y+s)ds.
\eeq
Now let $\mathcal C_\rho$ be a closed semicircle of radius $\rho$ lying in the upper half plane with the center at the origin
and set $\Gamma_\rho=\mathcal C_\rho \cup [-\rho, \rho]$. 
Estimates \eqref{decay5}, \eqref{zeros}, \eqref{kapm}, and {\bf I.\ (e)} imply that the Jordan lemma is applicable to the function 
$G_\pm(\la,x,y)$ as a function of $k_\pm$ when $\pm y\geq\pm x$. Moreover, formula \eqref{rest} implies 
 \[
 \phi_\mp(\la_j,x)\res_{\I\kappa_j^\pm}T_{\pm}(\la)=\I\gamma_j^\pm\phi_\pm(\la_j,x),
 \]
and thus
\begin{align}\nn
\sum_{j=1}^p \res_{\I\kappa_j^\pm}G_\pm(\la,x,y)&= \I\sum_{j=1}^p \gamma_j^\pm\phi_\pm(\la_j,x)\E^{\mp\kappa_j^\pm\,y}\\ \label{pomog}
& =\I\left(F_{d,\pm}(x+y) \pm\int_x^{\pm\infty}K_\pm(x,s) F_{d,\pm}(s+y)ds\right),
\end{align}
where we denote ($d$ for discrete spectrum)
\[
F_{d,\pm}(x):=\sum_{j=1}^p \gamma_j^\pm\E^{\mp\kappa_j^\pm\,x}.
\]
Now let  $c_\pm=\ul c$, which means that variable $k_\pm \in \R$ covers the whole continuous spectrum of $L$. Then the function  $G_\pm(\la,x,y)$ as a function of $k_\pm$ has a meromorphic continuation to the domain $\C^+$ with poles at the points  $\I\kappa_j^\pm$. By use of the Cauchy theorem, of the Jordan lemma and \eqref{int00},for $\pm x< \pm y$, we get 
\begin{align*}
\lim_{\rho\to\infty}\frac{1}{2\pi}\oint_{\Gamma_{\rho}} G_\pm(\la, x,y)dk_-& =\I\sum_{j=1}^p
\res_{\I\kappa_j^\pm} G_\pm(\la,x,y)=K_\pm(x,y)\\
&  + F_{r,\pm}(x+y)\pm\int_x^{\pm\infty}K_\pm(x,s) F_{r,\pm}(y+s)ds .
\end{align*}
Joining this with \eqref{pomog}, we get equation \eqref{lemF} in the case $c_\pm=\ul c$.  Unlike to this, in the case $c_\pm=\ol c$ the real values of variable $k_\pm$ correspond to the spectrum of multiplicity two only. Then the function $G_\pm(\la,x,y)$ considered as a function of $k_\pm$ in $\C^+$ has a jump along the interval  $[0, \I b_\pm]$ with $b_\pm =\sqrt{c_\pm - c_\mp}>0$.  It does not have a pole in $b_\pm$ because by Lemma \ref{lemweak}, the estimate  $G_\pm(\la,x,y)=O((k_\pm - b_\pm)^{\alpha})$ with $-1<\alpha\leq -1/2$ is valid.

For large $\rho>0$, put $b_\rho=b_\pm+\rho^{-1}$, introduce a union of three intervals
\[
\mathcal C_\rho^\prime=[-\rho^{-1},\I b_\rho -\rho^{-1}]\cup [\rho^{-1},\I b_\rho  +\rho^{-1}]\cup[\I  b_\rho  -\rho^{-1}, \I  b_\rho  +\rho^{-1}],
\]
and consider a closed contour $\Gamma_\rho^\prime=\mathcal C_\rho\cup\mathcal C_\rho^\prime\cup[-\rho, -\rho^{-1}]\cup[\rho^{-1},\rho]$ oriented counterclockwise. The function $G_\pm(\la,x,y)$ is meromorphic inside the domain bounded by $\Gamma_\rho^\prime$ (we suppose that $\rho$ is sufficiently large such that all poles are inside this domain). Thus,

 \begin{align}\label{secc}
 \lim_{\rho\to\infty}\frac{1}{2\pi}\oint_{\Gamma_{\rho}^\prime} G_\pm(\la, x,y)dk_\pm = & \I\sum_{j=1}^p
\res_{\I\kappa_j^\pm} G_\pm(\la,x,y)=K_\pm(x,y)\\ \nn
& {} + F_{r,\pm}(x+y)\pm\int_x^{\pm\infty}K_\pm(x,s) F_{r,\pm}(y+s)ds\\ \nn
& {} + \frac{1}{2\pi}\int_{\I b_\pm}^0 \left(G_\pm(\la+\I0,x,y) - G_\pm(\la-\I 0,x,y)\right)dk_\pm .
\end{align}
 In the case  under consideration, that is when $c_\pm=\ol c$, the variable $k_\pm=\I \kappa$, $\kappa>0$, does not have a jump along 
 the spectrum of multiplicity one, and the same is true for the solution $\phi_\pm(\la,x)$. Thus the jump $[G_\pm]:=G_\pm(\la+\I0,x,y) - G_\pm(\la-\I 0,x,y)$ 
  stems from the function $T_\pm(\la)\phi_\mp(\la,x)$. By \eqref{2.18} and {\bf I.\ (b)} we have 
 $T_\pm\overline {T_\pm^{-1}}= -T_\mp\overline {T_\mp^{-1}}=-R_\mp$ on $\Sigma^{(1)}$. To simplify notations we omit the dependence on 
 $\la$ and $x$. The scattering relations \eqref{2.17} then imply 
 \[
 T_\pm\phi_\mp - \ol{T_\pm\phi_\mp}=-\ol{T_\pm}\left(\ol{\phi_\mp} + R_\mp\phi_\mp\right)=
 -\ol{T_\pm}T_\mp\phi_\pm,
 \]
 and, therefore, $[G_\pm]=-\E^{\pm k_\pm y}\ol{T_\pm(\la+\I 0)}T_\mp(\la+\I0)\phi_\pm(\la,x)$.
Set 
\[
\chi(\la):=- \ol{T_\pm(\la+\I 0)}T_\mp(\la+\I0),\quad \la\in[\ul c, \ol c].
\]
By use of \eqref{Jost1}, we get
\[\aligned  &\frac{1}{2\pi}\int_{\I b_\pm}^0 \left(G_\pm(\la+\I0,x,y) - G_\pm(\la-\I 0,x,y)\right)dk_\pm\\ &=
F_{\chi,\pm}(x+y) \pm\int_x^{\pm\infty} K_\pm(x,s)F_{\chi,\pm}(s+y)ds,\endaligned\]
where
\[
F_{\chi,\pm}(x)=\frac{1}{2\pi}\int_{\I b_{\pm}}^0 \chi(\la)\E^{\pm\I k_\pm x}d k_\pm=
\frac{1}{4\pi}\int_{\ul c}^{\ol c} \chi(\la)\E^{\pm\I k_\pm x}\frac{d\la}{\sqrt{\la-c_\pm}}.\]
Combining this with \eqref{secc}, \eqref{pomog}, and \eqref{hatr} and taking into account that
by \eqref{2.18}, 
\[
\frac{\chi(\la)}{\sqrt{\la-c_\pm}}=|T_\mp(\la)|^2 |k_\mp|^{-1}>0,\ \ \la\in(\ul c, \ol c),
\]
gives \eqref{lemF} in the case $c_\pm=\ol c$.
\end{proof}

\begin{corollary}  Put $\hat F_\pm(x):=2F_\pm(2x)$. Then equation \eqref{ME}
reads
\beq \label{MEB} \hat F_\pm(x+y)+B_\pm(x,y)\pm\int_0^{\pm\infty} B_\pm(x,s)\hat F_\pm(x+y+s)ds=0, \eeq
where $B_\pm(x,y)$ is the transformation operator from \eqref{Jost11}.
\end{corollary}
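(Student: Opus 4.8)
The plan is to obtain \eqref{MEB} from the Marchenko equation \eqref{ME} by a pure change of variables, using the dictionary \eqref{B-K} between the two kernel normalizations. Recall from \eqref{B-K} that $B_\pm(x,y)=2K_\pm(x,x+2y)$, equivalently $K_\pm(x,w)=\tfrac12 B_\pm\bigl(x,\tfrac{w-x}{2}\bigr)$, and that by definition $\hat F_\pm(x)=2F_\pm(2x)$, i.e.\ $F_\pm(2w)=\tfrac12\hat F_\pm(w)$. To keep the bookkeeping transparent I would first rename the second argument of \eqref{ME} to $\eta$, so that it reads
\[
K_\pm(x,\eta)+F_\pm(x+\eta)\pm\int_x^{\pm\infty}K_\pm(x,s)F_\pm(s+\eta)\,ds=0,\qquad \pm\eta>\pm x.
\]

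First I would set $\eta=x+2y$. The constraint $\pm\eta>\pm x$ then reads $\pm(x+2y)>\pm x$, that is simply $\pm y>0$, which matches the range in \eqref{MEB}. With this substitution the two non-integral terms become $K_\pm(x,x+2y)=\tfrac12 B_\pm(x,y)$ and $F_\pm\bigl(x+(x+2y)\bigr)=F_\pm\bigl(2(x+y)\bigr)=\tfrac12\hat F_\pm(x+y)$. For the integral I would substitute $s=x+2t$, so that $ds=2\,dt$ and the limits $s=x$, $s=\pm\infty$ pass to $t=0$, $t=\pm\infty$, preserving both the orientation and the $\pm$ prefactor. Under this change $K_\pm(x,s)=\tfrac12 B_\pm(x,t)$ and $F_\pm(s+\eta)=F_\pm\bigl(2(x+y+t)\bigr)=\tfrac12\hat F_\pm(x+y+t)$, so the integral term turns into $\pm\tfrac12\int_0^{\pm\infty}B_\pm(x,t)\hat F_\pm(x+y+t)\,dt$ (the Jacobian $2$ cancelling one of the two factors $\tfrac12$).

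Collecting the three terms, every summand now carries the common factor $\tfrac12$; multiplying through by $2$ and renaming $t$ back to $s$ yields exactly \eqref{MEB}. I expect there to be no genuine obstacle here beyond careful bookkeeping: the single point worth checking is that the rescaling by the factor $2$ appearing inside the arguments of $K_\pm$ and $F_\pm$ is exactly matched by the Jacobian $ds=2\,dt$, so that the overall prefactor $\tfrac12$ is uniform across all three terms and drops out. The linear change of variables is legitimate because \eqref{refff} guarantees that $K_\pm(x,\cdot)$, and hence $B_\pm(x,\cdot)$, are integrable against $F_\pm$, so all the integrals converge absolutely and the substitution is justified.
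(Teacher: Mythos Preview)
Your proof is correct; this corollary is a direct change of variables and the paper states it without proof, so your argument is exactly the routine computation the authors leave to the reader. The bookkeeping with the factor $\tfrac12$ and the Jacobian is accurate.
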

This equation and Lemma \ref{Best} allows us to establish the decay properties of $F_\pm(x)$.
\begin{lemma}\label{lem4.1}
Let $q\in\mathcal L_m^n(c_+, c_-)$, $m\geq 1$, $n\geq 0$. Then the kernels of the GLM equations \eqref{ME} possess the  property:
\begin{enumerate}[\bf I.]
\addtocounter{enumi}{3}
\item
The function $F_\pm(x)$ is $n+1$ times differentiable with $F_\pm^\prime\in \mathcal L_{m}^n(\R_\pm)$.
\end{enumerate}
\end{lemma}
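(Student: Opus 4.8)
The plan is to extract both the smoothness and the decay of $F_\pm$ from the Marchenko equation \eqref{MEB}, treating $B_\pm$ as the \emph{known} object whose derivatives are controlled by Lemma~\ref{Best}. Since $\hat F_\pm(x)=2F_\pm(2x)$, a linear change of variables shows that the assertion is equivalent to: $\hat F_\pm$ is $n+1$ times differentiable and $(1+|x|^m)\hat F_\pm^{(i)}(x)\in L^1(\R_\pm)$ for $i=1,\dots,n+1$. Evaluating \eqref{MEB} at $y=0$ and substituting $u=x+s$ in the integral gives the one--variable identity
\[
\hat F_\pm(x)+B_\pm(x,0)\pm\int_x^{\pm\infty}B_\pm(x,u-x)\,\hat F_\pm(u)\,du=0 .
\]
Because $B_\pm(x,0)=\pm\int_x^{\pm\infty}q_\pm$ and the integral term are $o(1)$, this already shows that $\hat F_\pm$ is continuous, bounded, and tends to $0$ as $x\to\pm\infty$; this is the base of the induction.

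The key device is that in this identity the argument of $\hat F_\pm$ no longer contains $x$, so differentiating in $x$ never differentiates $\hat F_\pm$ itself: each $\tfrac{d}{dx}$ produces a boundary term at $u=x$ (a diagonal value of a $B_\pm$--derivative times $\hat F_\pm(x)$) and an integral whose kernel is hit by $\partial_x-\partial_y$. Iterating and, at every step, inserting the already established lower--order formula for $\hat F_\pm^{(b)}$, I obtain by induction a closed expression
\[
\hat F_\pm^{(p)}(x)=\pm q_\pm^{(p-1)}(x)+P_p(x)+\int_x^{\pm\infty}\big[(\partial_x-\partial_y)^{p}B_\pm\big](x,u-x)\,\hat F_\pm(u)\,du,
\]
where $P_p$ collects finitely many products of diagonal values of $B_\pm$--derivatives (equivalently, derivatives of $q_\pm$ and tail functions) with $\hat F_\pm$, with some $q_\pm^{(j)}$, $j\le p-2$, or with a lower--order integral remainder. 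Since Lemma~\ref{Best} guarantees that $B_\pm$ has $n+1$ partial derivatives, this construction is valid for $p\le n+1$, which yields the claimed differentiability; the estimates there also justify differentiation under the integral and the existence of the boundary terms. The leading term $\pm q_\pm^{(p-1)}$ lies in $\mathcal L_m^0(\R_\pm)$ because $p-1\le n$.

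It remains to show that $P_p$ and the integral term lie in $\mathcal L_m^0(\R_\pm)$, and this is where the real work is. Two structural facts organize the estimates. First, since $n\ge1$, each $q_\pm^{(j)}$ with $j\le n-1$ is the tail integral of $q_\pm^{(j+1)}\in L^1$, hence \emph{bounded}; consequently in every product at most one factor fails to be bounded, and that factor may be taken to be a $q_\pm^{(j)}\in\mathcal L_m^0$, so the product is (one $\mathcal L_m^0$ factor)$\times$(bounded factors). Second, by \eqref{der4} the operator $\partial_x-\partial_y$ cancels the leading term $\mp q_\pm(x+y)$ in the estimate of Lemma~\ref{Best}, so the kernels $(\partial_x-\partial_y)^{p}B_\pm(x,u-x)$ and the purely diagonal summands of $P_p$ are dominated by products of the decaying tail functions $\sigma_{\pm,i}$, evaluated at $x$ and at $u$. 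The main obstacle is then a family of weighted moment estimates of the type $(1+|x|^m)\sigma_{\pm,i}(x)^2\in L^1(\R_\pm)$ and $(1+|x|^m)\sigma_{\pm,i}(x)\hat\sigma_{\pm,j}(x)\in L^1(\R_\pm)$: each is proved by Fubini together with an integration by parts that transfers the weight $(1+|x|^m)$ onto $q_\pm$, where exactly $m$ finite moments are available (one also uses $m\ge1$, so that $\int_{\R_\pm}|x|\,|q_\pm|\,dx<\infty$). Granting these, every term of $\hat F_\pm^{(p)}$ other than the leading one lies in $\mathcal L_m^0(\R_\pm)$, whence $\hat F_\pm^{(p)}\in\mathcal L_m^0(\R_\pm)$ for $1\le p\le n+1$.

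Translating back gives $(1+|x|^m)F_\pm^{(i)}\in L^1(\R_\pm)$ for $i=1,\dots,n+1$, i.e.\ $F_\pm'\in\mathcal L_m^n(\R_\pm)$. I expect the weighted moment estimates to be the only genuinely delicate point; they are also the reason the statement controls $F_\pm'$ rather than $F_\pm$, since $\hat F_\pm$ itself is only dominated by $\sigma_{\pm,0}$, which by \eqref{decay6} lies in $\mathcal L_{m-1}^0(\R_\pm)$ --- one moment short --- and differentiating once removes an integration and restores the full $m$-th moment.
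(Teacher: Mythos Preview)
Your approach is viable but differs from the paper's, and your base case as written is too weak to support the induction.

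The paper differentiates \eqref{MEB} with respect to $y$ rather than $x$. This makes $\hat F_\pm^{(j)}(x+y)$ and $\hat F_\pm^{(j)}(x+y+s)$ appear while leaving the kernel $B_\pm(x,s)$ \emph{undifferentiated}; setting $y=0$ and substituting $\xi=x+s$ yields, for every $j\le n+1$, a Volterra equation
\[
\hat F_\pm^{(j)}(x)+H_{\pm,j}(x)\pm\int_x^{\pm\infty}B_\pm(x,\xi-x)\hat F_\pm^{(j)}(\xi)\,d\xi=0,\qquad H_{\pm,j}(x)=\partial_y^{\,j}B_\pm(x,0),
\]
with the \emph{same} kernel for all $j$. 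A single Gronwall argument (using $|B_\pm(x,\xi-x)|\le \sigma_{\pm,0}(\xi)e^{\hat\sigma_{\pm,0}(x)-\hat\sigma_{\pm,0}(\xi)}$) then gives $|\hat F_\pm^{(j)}(x)|\le C\big(|H_{\pm,j}(x)|+\int_x^{\pm\infty}|H_{\pm,j}|\,\sigma_{\pm,0}\big)$, and the right-hand side is read off as $\mathcal L_m^0$ directly from Lemma~\ref{Best}. No induction on $j$, no product terms $P_p$, and no separate weighted-moment lemmas are needed.

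Your route---setting $y=0$ first so that $x$-differentiation hits only $B_\pm$---produces an explicit formula for $\hat F_\pm^{(p)}$ in terms of $\hat F_\pm$ and $B$-derivatives, but at the cost of the combinatorics of $P_p$ and of the quadratic moment estimates $(1+|x|^m)\sigma_{\pm,0}^2\in L^1$, etc. These estimates are correct (and your remark that $m\ge1$ is exactly what is needed for them), but the paper avoids them entirely.

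One concrete gap: your stated base case ``$\hat F_\pm$ is continuous, bounded, and tends to $0$'' is not enough. In the term $B_\pm(x,0)\hat F_\pm(x)$ you need $|\hat F_\pm(x)|\lesssim \sigma_{\pm,0}(x)$ to reach $\mathcal L_m^0$ via your $\sigma^2$-estimate; mere boundedness only gives $\sigma_{\pm,0}\in\mathcal L_{m-1}^0$, one moment short (as you yourself note for $\hat F_\pm$ at the end). Obtaining the quantitative bound $|\hat F_\pm|\lesssim\sigma_{\pm,0}$ from the Volterra equation again requires a Gronwall step---precisely the device the paper uses throughout. Also, your appeal to Lemma~\ref{Best} is stated only for $n\ge1$; for $n=0$ you must use \eqref{K1}--\eqref{K2} instead.
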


\begin{proof}
Differentiation of \eqref{MEB} $j$ times with respect to $y$ gives
\beq \label{derivGLM}
\hat F_\pm^{(j)}(x+y)+B^{(j)}_{\pm,y}(x,y)\pm\int_0^{\pm\infty} B_\pm(x,s)\hat F_\pm^{(j)}(x+y+s)ds=0.
\eeq
Set here $y=0$ and abbreviate $H_{\pm,j}(x)=B^{(j)}_{\pm,y}(x,0)$. Recall that the estimates \eqref{Best1} and \eqref{defsig1} imply  $H_{\pm,j}\in \mathcal L_m^{n+1-j}(\R_\pm)$, $j=1,\dots,n+1$. By changing variables $x+s=\xi$, we get
\beq \label{inner}
\hat F_\pm^{(j)}(x)+ H_{\pm,j}(x)\pm\int_x^{\pm\infty} B_\pm(x,\xi-x) \hat F_\pm^{(j)}(\xi)d\xi=0.
\eeq
Formula \eqref{B-K} and the estimate \eqref{K1} imply
\[
\abs{B_\pm(x,\xi-x)}\leq \sigma_{\pm,0}(\xi)
\E^{\hat\sigma_{\pm,0}(x)-\hat\sigma_{\pm,0}(\xi)}
\]
and from \eqref{inner} it follows
\begin{align} \label{in5} |\hat F_\pm^{(j)}(x)|\E^{-\hat\sigma_{\pm,0}(x)}\leq & |H_{\pm,j}(x)|\E^{-\hat\sigma_{\pm,0}(x)}\\ \nn
\pm &\int_x^{\pm\infty} \sigma_{\pm,0}(s) \E^{-\hat\sigma_{\pm,0}(s)}|\hat F_\pm^{(j)}(s)|ds\\ \nn
= & |H_{\pm,j}(x)|\E^{-\hat\sigma_{\pm,0}(x)} +\Phi_{\pm,j}(x),
\end{align}
where $\Phi_{\pm,j}(x):=\pm\int_x^{\pm\infty}|F_\pm^{(j)}(s)|\E^{-\hat\sigma_{\pm,0}(s)}\si_{\pm,0}(s)ds$.  Multiplying the last inequality by $\sigma_{\pm,0}(x)$ and using \eqref{defsig},
we get
\[
\mp\frac{d}{dx}(\Phi_{\pm,j}(x)\E^{-\hat\sigma_{\pm,0}(x)})\leq \abs{H_{\pm,j}(x)}\si_{\pm,0}(x)\E^{-2\hat\sigma_{\pm,0}(x)}.
\]
By integration, we have
\[
\Phi_{\pm,j}(x)\leq \pm C\E^{\hat\sigma_{\pm,j}(x)}\int_x^{\pm\infty} H_{\pm,j}(s)\si_{\pm,0}(s)ds.
\]
This inequality implies $\Phi_\pm(\cdot)\in \mathcal L_m^1(\R_\pm)$ because $H_{\pm,j}\in \mathcal L_m^{n+1-j}(\R_\pm)$, $j\geq 1$, $\sigma_{\pm,0}\in \mathcal L_{m-1}^1(\R_\pm)$.
Property {\bf IV} now follows from \eqref{in5}.
\end{proof}

\subsection{The Marchenko and Deift--Trubowitz conditions }\label{march}
In this subsection we give the proof of property {\bf II.\ (b)} and also prove the continuity of the reflection coefficient $R_\pm$ at the edge of the spectrum $\ul c$ when $c_\pm=\ul c$ in the resonant case. As is known, these properties are crucial for solving the inverse problem but were originally missed in the
seminal work of Faddeev \cite{Fa58} as pointed out by Deift and Trubowitz \cite{DT79}, who also gave a counterexample which showed that some
restrictions on the scattering coefficients at the bottom of the continuous spectrum were necessary for solvability of the inverse problem. The behavior of
the scattering coefficients at the bottom of the continuous spectrum is easy to understand for $m=2$, both for decaying and steplike cases, because the Jost solutions are differentiable with respect to the local parameters $k_\pm$ in this case. For $m=1$ the situation is more complicated. For the case $q\in\mathcal L_1^0(0,0)$ continuity of the scattering coefficients was established  independently by Guseinov \cite{gus} and Klaus \cite{Kl}
(see also \cite{KlAk}). For the case $q\in \mathcal L_1^0(c_+, c_-)$ property {\bf II.\ (b)} is proved in \cite{Akt}. We  propose here another proof following the  approach of Guseinov which will give as some additional formulas  of independent interest (in particular, when trying to understand the dispersive decay of
solutions to the  time-dependent Schr\"odinger equation, see e.g.\ \cite{EKMT}). Nevertheless, one has to emphasize that the Marchenko approach does not require these properties of the scattering data. In \cite{M} the direct/inverse scattering problem for $q\in \mathcal L_1^0(c_+, c_-)$ was solved under the following less restrictive conditions:
 \begin{enumerate} [1)]
\item The transmission coefficient $T(k)$, where $k^2=\la$, is bounded for $k\in \C^+$ in a vicinity of $k=0$ (at the bottom of the continuous spectrum);
 \item $\lim_{k\to 0} k T^{-1}(k)(R_\pm(k)+1)=0$.
 \end{enumerate}
Our properties {\bf I.\ (b)} and {\bf II} imply the Marchenko condition at point $\ul c$. Namely, if $W(\ul c)\neq 0$, then property 
(i) of Lemma \ref{propwronsk} implies $W(\ul c )\in\R$, and from {\bf I.\ (b)} it follows that $R_\pm(c_\pm)=-1$ for $\ul c=c_\pm$. 
The other reflection coefficient $R_\mp(c_-)$ is simply not defined at this point. Of course, it has the property $R_\mp(\ol c)=-1$ (cf.\ \eqref{minus1}), because $W(\ol c)\neq 0$, but we do not use this fact when solving the inverse problem. 
Our choice to give conditions {\bf I--III} as a part of necessary and sufficient ones is stipulated by the following. First of all, getting an analog of the Marchenko condition 1) directly, without {\bf II.\ (b)}, requires  additional efforts. The second reason is that in fact we 
 additionally justify here that the conditions proposed for $m=2$ in \cite{CK} are valid for the first finite moment of perturbation too.
 
The proof is given for the case $\ul c=c_-$, the case $\ul c=c_+$ is analogous.
For $k_\pm\in\R$ denote  $h_\pm(\la,x)=\phi_\pm(\la,x)\E^{\mp \I k_\pm x}$, then \eqref{Jost11} implies
\begin{align*}
h_\pm(\la) &= h_\pm(\la,0) = 1\pm\int_0^{\pm\infty} B_\pm(0,y)\E^{\pm2\I y k_\pm } dy,\\
h_\pm^\prime(\la) &= h_\pm^\prime(\la,0)=\pm\int_0^{\pm\infty}\frac{\pa}{\pa x} B_\pm(0,y)\E^{\pm2 \I  y k_\pm}dy.
\end{align*}
We observe that for $\ul c= c_-$ we have $2\I k_+(\ul c)= - b=- 2\sqrt{c_+ - c_-}<0$, and therefore,
in a vicinity of $\ul c$,
\beq\label{rright}
h_+(\la)= 1 +\int_0^\infty B_+(0,y)\E^{-b y}\E^{\I\tau(\la) y}dy, \quad \tau(\la)=2\frac{\la - \ul c}{k_+ -\I b/2},
\eeq
where $\tau(\la)$ is differentiable in a vicinity of $\ul c$ and $\tau(\ul c)=0$.
Since $ B_+(0,y)\E^{-b y}\in L_s^1(\R_+)$ and $ B_{+,x}(0,y)\E^{-b y}\in L_s^0(\R_+)$, $s=1,2,\dots$, then
\beq\label{imp}
\begin{aligned} -\phi_+(\ul c, 0) \phi_+^\prime(\la,0) + &\phi_+(\la, 0) \phi_+^\prime(\ul c,0)= h_+(\la)h_+^\prime(\ul c) - h_+(\ul c)h_+^\prime(\la)\\  
+& (2\I k_+ + b)h_+(\ul c)h_+(\la)= C(\la -\ul c)(1 + o(1)), \  \quad \la \to \ul c.\end{aligned}
\eeq
Now consider the function $\Phi(\la)=h_-(\la)h_-^\prime(\ul c) - h_-(\ul c)h_-^\prime(\la)$,
where $k_-\in\R$. One can show (cf.\ \cite{EKMT}) that it has a representation 
\beq\label{defFF}\Phi(\la)=2\I k_-\Psi(k_-), \ \ \mbox{where}\ \ \Psi(k_-)=\int_{\R_-}H(y)\E^{-2\I y k_-}dy,\eeq
with $H(x):=D(x)h_-(\ul c) - K(x) h^\prime_-(\ul c)$,
\[
K(x)=\int_{-\infty}^x B_-(0,y)dy,\ \ D(x)=\int_{-\infty}^x \frac{\pa}{\pa x} B_-(0,y)dy.
\]
Note that the integral in \eqref{defFF} is to be understood as an improper integral.
Using \eqref{MEB} and \eqref{derivGLM} one can get (see \cite{gus}) that the function $H(x)$ satisfies the following integral equation
\[
H(x) - \int_{\R_-} H(y)\hat F_-(x+y)dy=h_-(\ul c)\left(\int_{\R_-}B_-(0,y)\hat F_-(x+y)dy - F_-(x)\right).
\]
By property {\bf IV} we have $\hat F^\prime_- \in \mathcal L_1^0(\R_-)$. Using this and \eqref{K1} one can prove that $H\in L_1(\R_-)$ and therefore
$\Phi(\la)=2\I k_-\Psi(0)(1+o(1))$, with $\Psi(0)\in\R$. Moreover,
\[ \aligned
\phi_-(\la, 0) \phi_-^\prime(\ul c,0)- &\phi_-(\ul c, 0) \phi_-^\prime(\la,0)
=  -2\I k_-h_-(\la)h_-(\ul c) + \Phi(\la)\\ 
= & 2\I k_- (h_-(\ul c)^2 +\Psi(0))(1+O(1)),\quad \la\to \ul c,
\endaligned\]
where $h_-(\ul c)\in\R$. Combining this with \eqref{imp}, we get
the following

\begin{lemma}[\cite{Akt}]\label{lemg}
Let $\ul c=c_-$. Then in a vicinity of $\ul c$ the following asymptotics are valid:
\begin{enumerate}[(a)]
\item
If $\phi_-(\ul c, 0)\phi_+(\ul c,0)\neq 0$ then
\[
\frac{\phi_+^\prime(\la,0)}{\phi_+(\la,0)} -\frac{\phi^\prime_+(\ul c,0)}{\phi_+(\ul c,0)}=O(\la - \ul c),\quad \frac{\phi^\prime_-(\la,0)}{\phi_-(\la,0)} -\frac{\phi^\prime_-(\ul c,0)}{\phi_-(\ul c,0)}=\I\alpha\,\sqrt{\la - \ul c}(1+o(1));
\]
\item 
If $\phi_-^\prime(\ul c, 0)\phi_+^\prime(\ul c,0)\neq 0$ then
\[
\frac{\phi_+(\la,0)}{\phi_+^\prime(\la,0)} -\frac{\phi_+(\ul c,0)}{\phi_+^\prime(\ul c,0)}=O(\la - \ul c),\quad \frac{\phi_-(\la,0)}{\phi_-^\prime(\la,0)} -\frac{\phi_-(\ul c,0)}{\phi_-^\prime(\ul c,0)}=\I\hat \alpha\,\sqrt{\la - \ul c}(1+o(1)),
\]
\end{enumerate}
where $\alpha, \hat\alpha\in\R$.
\end{lemma}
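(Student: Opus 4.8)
The plan is to treat the lemma as a purely algebraic repackaging of the two Wronskian-type identities established just above its statement: identity \eqref{imp} for the index $+$, and the $k_-$-identity in the display preceding the lemma for the index $-$. These two identities already carry all the analytic content, so for each of the four logarithmic-derivative differences I would write it as a single fraction, identify its numerator with one of these two combinations, and check that its denominator converges to a nonzero limit under the relevant non-vanishing hypothesis. Throughout I use $\phi_\pm(\la,0)=h_\pm(\la)$ and the joint continuity of $\phi_\pm(\la,x)$, $\phi_\pm'(\la,x)$ in $(\la,x)$ coming from the representations \eqref{Jost1}, \eqref{Jost11}.

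For the index $+$ I would compute, in case (a),
\[
\frac{\phi_+'(\la,0)}{\phi_+(\la,0)}-\frac{\phi_+'(\ul c,0)}{\phi_+(\ul c,0)}
=\frac{\phi_+'(\la,0)\phi_+(\ul c,0)-\phi_+(\la,0)\phi_+'(\ul c,0)}{\phi_+(\la,0)\phi_+(\ul c,0)}.
\]
The numerator is exactly the negative of the left-hand side of \eqref{imp}, hence $O(\la-\ul c)$, while by continuity the denominator tends to $\phi_+(\ul c,0)^2\neq 0$; the quotient is therefore $O(\la-\ul c)$. For case (b) the \emph{same} numerator now sits over $\phi_+'(\la,0)\phi_+'(\ul c,0)\to\phi_+'(\ul c,0)^2\neq 0$, again giving $O(\la-\ul c)$. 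The conceptual reason the $+$ index produces an analytic $O(\la-\ul c)$ rather than a square root is that $2\I k_+(\ul c)=-b\neq 0$, so $\phi_+$ is differentiable in $\la$ near $\ul c$, being analytic in $k_+$ away from its branch point.

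For the index $-$ the point $\ul c=c_-$ is precisely the branch point $k_-=0$, and $k_-=\sqrt{\la-\ul c}$ by \eqref{defkpm}, which is where the $\sqrt{\la-\ul c}$ enters. In case (a) I would form
\[
\frac{\phi_-'(\la,0)}{\phi_-(\la,0)}-\frac{\phi_-'(\ul c,0)}{\phi_-(\ul c,0)}
=\frac{-\left(\phi_-(\la,0)\phi_-'(\ul c,0)-\phi_-(\ul c,0)\phi_-'(\la,0)\right)}{\phi_-(\la,0)\phi_-(\ul c,0)},
\]
whose numerator equals $-2\I k_-(h_-(\ul c)^2+\Psi(0))(1+o(1))$ by the pre-lemma $-$ identity, and whose denominator tends to $\phi_-(\ul c,0)^2=h_-(\ul c)^2\neq 0$. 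Since $h_-(\ul c),\Psi(0)\in\R$, this yields $\I\alpha\sqrt{\la-\ul c}(1+o(1))$ with $\alpha=-2(h_-(\ul c)^2+\Psi(0))/h_-(\ul c)^2\in\R$. Case (b) uses the same numerator over $\phi_-'(\la,0)\phi_-'(\ul c,0)\to\phi_-'(\ul c,0)^2\neq 0$, giving $\I\hat\alpha\sqrt{\la-\ul c}(1+o(1))$ with $\hat\alpha=2(h_-(\ul c)^2+\Psi(0))/\phi_-'(\ul c,0)^2\in\R$. The two non-vanishing hypotheses in (a) and (b) are exactly what keep the respective denominators bounded away from zero in a neighbourhood of $\ul c$.

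I expect the lemma itself to be essentially routine once the two input identities are in hand; the genuine obstacle lives upstream, in the $-$ identity. There the crux is establishing $\Psi(k_-)=\Psi(0)(1+o(1))$ with $\Psi(0)$ \emph{real}, which reduces to showing $H\in L^1(\R_-)$ for the function $H$ entering \eqref{defFF}; this leans on property \textbf{IV} (so that $\hat F_-'\in\mathcal L_1^0(\R_-)$) together with the kernel estimate \eqref{K1}. A secondary point worth checking carefully is that the error factor written as $(1+O(1))$ in the display preceding the lemma is in fact $(1+o(1))$, since only the latter produces the stated leading-order asymptotics, and that the convergence of the denominators is uniform in $\la$ near $\ul c$, which once more invokes the joint continuity of $\phi_\pm$ and $\phi_\pm'$.
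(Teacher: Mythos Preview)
Your proposal is correct and follows exactly the approach the paper intends: the lemma is stated immediately after the two Wronskian-type identities with the words ``Combining this with \eqref{imp}, we get the following'', and your write-up simply makes that combination explicit by expressing each logarithmic-derivative difference as a single fraction and invoking the continuity of the denominators. Your observation that the $(1+O(1))$ in the display preceding the lemma should read $(1+o(1))$ is also correct, and your remark that the analytic substance lies upstream in the proof that $H\in L^1(\R_-)$ (hence $\Psi(0)\in\R$) is precisely the point.
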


Now suppose that $W(\ul c)= 0$, that is, $\phi_-(\ul c ,x)=C \phi_+(\ul c, x)$ with $C\in \R\setminus\{0\} $ being a constant. Therefore 
at least one of two cases described in Lemma \ref{lemg} holds true. Since the functions $\phi_+$ and $\phi_-$ are continuous in a vicinity of $\ul c$, then in the case (a) we have $\phi_-(\la,0)\phi_+(\la,0) = \beta (1+o(1))$ with $\beta\in\R\setminus\{0\}$. Thus,
\begin{align*}
W(\la) =& \phi_-(\la,0)\phi_+(\la,0)\left(\frac{\phi^\prime_-(\la,0)}{\phi_-(\la,0)} -\frac{\phi^\prime_-(\ul c,0)}{\phi_-(\ul c,0)}\right. \\
& -\left.\frac{\phi_+^\prime(\la,0)}{\phi_+(\la,0)} +\frac{\phi^\prime_+(\ul c,0)}{\phi_+(\ul c,0)}\right)=\I\alpha\beta\,\sqrt{\la - \ul c}(1+o(1)),
\end{align*}
where $\alpha\beta\in \R$. In fact, $\gamma=\alpha\beta\neq 0$ because of property \eqref{musk1}. The case (b) is analogous, and thus {\bf II.\ (b)} is proved. To prove
 the continuity of the reflection coefficient $R_-$ at $\ul c$ when $\ul c=c_-$ it is sufficient to apply a "conjugated" version of Lemma \ref{lemg}, which is valid if we consider the asymptotics as $\la\to\ul c$, $\la\in\Sigma^{(1)}$, to formula \eqref{2.17}.

We summarize our results by listing the conditions of the scattering data which  shown to be
necessary in the present section, and we will show them to be also sufficient for solving the inverse problem
in the next section.

\begin{theorem}[necessary conditions for the scattering data]\label{theor1}
The scattering data of a potential $q\in\mathcal{L}_m^n(c_+, c_-)$
\begin{align}\nn
{\mathcal S}_m^n(c_+, c_-): = \Big\{ & R_+(\lambda),\,T_+(\lambda),\, \sqrt{\la - c_+}\in\R; \,
R_-(\lambda),\,T_-(\lambda),\,   \sqrt{\la - c_-}\in \R;\\\label{4.6}
& \lambda_1,\dots,\lambda_p\in (-\infty, \ul c),\,
\gamma_1^\pm,\dots,\gamma_p^\pm\in\R_+\Big\}
\end{align}
possess the properties \emph{\textbf{I--III}} listed in Lemma~\ref{lem2.3}. The
functions $F_\pm(x,y)$, defined in \eqref{lemF}, possess property
\emph{\textbf{IV}} from Lemma~\ref{lem4.1}.
\end{theorem}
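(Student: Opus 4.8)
The plan is to recognize that Theorem~\ref{theor1} is nothing more than a compilation of the necessary conditions that were established piecemeal throughout this section, so that my task reduces to assembling the relevant lemmas and checking that the data set $\mathcal S_m^n(c_+,c_-)$ is well defined. First I would record that for $q\in\mathcal L_m^n(c_+,c_-)$ with $m\geq 1$ the operator $L$ has a finite discrete spectrum $\{\la_1<\cdots<\la_p\}\subset(-\infty,\ul c)$ (see~\cite{Akt}), and that the norming constants $\gamma_j^\pm=\bigl(\int_\R\phi_\pm^2(\la_j,x)\,dx\bigr)^{-1}$ are well defined and strictly positive, the eigenfunctions $\phi_\pm(\la_j,\cdot)$ being real-valued and square integrable. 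This pins down the structure of the set in~\eqref{4.6}.

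Next I would invoke Lemma~\ref{lem2.3} directly: properties \textbf{I.\,(a)--(e)} and \textbf{II.\,(a)} are exactly its content, the proof (following~\cite{bet}, originally carried out for $m=2$) being valid verbatim for $m=1$, and property \textbf{III} for $k_\pm\neq 0$ follows at once from~\eqref{2.17}, the continuity of the Jost solutions, and the absence of resonances on $(\ul c,\infty)$. Property \textbf{IV} is precisely Lemma~\ref{lem4.1}, which was derived from the GLM equation~\eqref{MEB} together with the derivative estimates of Lemma~\ref{Best}. Thus none of these require new work here beyond citation.

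The hard part will be the two points deferred during the proof of Lemma~\ref{lem2.3}, namely property \textbf{II.\,(b)} and the continuity of $R_\pm$ at the band edge $\ul c$ in the resonant case $W(\ul c)=0$. Both are supplied in Subsection~\ref{march}. There the Guseinov-type analysis culminating in Lemma~\ref{lemg} gives the edge asymptotics of the logarithmic derivatives of the Jost solutions, from which one reads off $W(\la)=\I\gamma\sqrt{\la-\ul c}\,(1+o(1))$ with $\gamma\in\R\setminus\{0\}$, the non-vanishing of $\gamma$ being forced by the lower bound~\eqref{musk1}; this establishes \textbf{II.\,(b)}. The conjugated form of Lemma~\ref{lemg}, applied as $\la\to\ul c$ along $\Sigma^{(1)}$, then yields the continuity of $R_\pm$ through~\eqref{2.17}, completing \textbf{III}.

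Once these band-edge asymptotics are in hand, the theorem follows simply by collecting the statements above. I expect the resonant-case analysis of Subsection~\ref{march} to be the only genuine obstacle, everything else being a direct consequence of the preceding lemmas; accordingly the proof of Theorem~\ref{theor1} itself should be a short paragraph of references rather than a fresh argument.
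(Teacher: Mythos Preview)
Your proposal is correct and matches the paper's approach exactly: the paper treats Theorem~\ref{theor1} purely as a summary statement (``We summarize our results by listing the conditions\dots'') with no separate proof, since all content has already been established in Lemma~\ref{lem2.3}, Lemma~\ref{lem4.1}, and Subsection~\ref{march}. Your identification of the deferred points (\textbf{II.\,(b)} and the resonant-case continuity of $R_\pm$ at $\ul c$) as the only nontrivial ingredients, supplied via Lemma~\ref{lemg}, is precisely how the paper structures it.
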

%-------------------%

\section{The inverse scattering problem}

Let $\mathcal{S}_m^n(c_+, c_-)$ be a given set of data as in \eqref{4.6} satisfying the properties listed in Theorem~\ref{theor1}.

We begin by showing that, given $F_\pm(x,y)$ (constructed from our data via  \eqref{lemF}), the GLM equations \eqref{ME} can be
solved for $K_\pm(x,y)$ uniquely. First of all we observe that condition {\bf IV} implies 
$F_\pm\in \mathcal L_{m-1}^{n+1}(\R_\pm)$ (and therefore $F_\pm \in L^1(\R_\pm)\cap L^1_{\loc}(\R)$) as well as
$F_\pm$ is absolutely continuous on $\R$ for $m=1$.
Introduce the operator
\[
(\mathcal F_{\pm,x}f)(y)=\pm\int_0^{\pm\infty}  F_\pm(t+y+2x)f(t)dt.
\]
The operator is compact by \cite[Lem.~3.3.1]{M}. To prove that
$I+\mathcal F_{\pm,x}$ is invertible for every $x\in\R$, it is hence sufficient to prove
that the respective homogeneous equation
$
f(y)+\int_{\R_\pm}  F_\pm(y+t+2x) f(t)d t =0
$
has only the trivial solution in the space $L^1(\R_\pm)$. Consider first the case $\underline c=c_-$ and the equation
\beq \label{5.102} f(y) +\int_0^\infty  F_+(y+t+2x)f(t)dt=0,\quad f\in L^1(\R_+).\eeq Suppose that
 $f(y)$ is a
nontrivial solution of \eqref{5.102}.
Since $F_+(x)$ is real-valued, we can assume $f(y)$ to be real-valued too. By property {\bf IV}, the function $ F_+(t)$ is bounded as $t\geq x$ and hence the solution $f(y)$ is bounded too. Thus $f\in L^2(\R_\pm)$ and 
\begin{align} \nn
0 =& 2\pi\left(\int_{\R_+} f(y)\overline{f(y)} dy +\iint_{\R_+^2} F_+(y+t+2x)f(t)\overline{f(y)} dydt\right)=\sum_{j=1}^p \gamma_j^+(\tilde f(\la_j,x))^2\\ \nn
&+\int_{c_-}^{c_+}\frac{|T_-(\la)|^2}{|\la - c_-|^{1/2} }(\tilde f(\la,x))^2 d\la+\int_\R R_+(\la)\E^{2\I k x}\widehat f(-k)\widehat f(k)dk +\int_\R |\widehat f(k)|^2 dk,
\end{align}
where $k:=k_+=\sqrt{\la - c_+}$,
\[
\tilde f(\la,x)= \int_{\R_+} \E^{-\sqrt{c_+ - \la}\, (y+x)} f(y)d y, \quad  \mbox{and}\quad  \widehat f(k) =\int_x^\infty \E^{\I k y} f(y)d y .
\]
Since $\tilde f(\la,x)$ is real-valued for $\la<c_+$, the corresponding summands are nonnegative.
Omitting them and taking into account that (cf. \cite[Lem.~3.5.3]{M})
\[
\int_\R R_+(\la)\E^{2\I k x}\widehat f(-k)\widehat f(k) dk \leq \int_\R |R_+(\la)||\widehat f(k)|^2 dk,
\]
we come to the inequality $\int_\R (1-|R_+(\la)|)|\widehat f(k)|^2 dk\leq 0$. By property {\bf I.\ (c)}, $|R_+(\la)|<1$ for $\la\neq c_+$, therefore, $\widehat f(k)=0$, i.e.\ $f$ is the trivial solution of \eqref{5.102}.

For the solution $f$ of the homogeneous equation $(I+\mathcal F_{-,x})f=0$  we proceed in the same way and come to the inequality  $\int_\R (1-|R_-(\la)|)|\widehat f(k_-)|^2 dk_-\leq 0$, where $|R_-(\la)|<1$ for $\la >c_+$. Thus $\widehat f(k)$ is a holomorphic function for $k\in \C^+$,
continuous up to the boundary,  and $\widehat f(k)=0$  on the rays $k^2>c_+-c_-$. Continuing 
 $\widehat f(k)$ analytically in the symmetric domain $\C^+$ via these rays, we come to the equality  $\widehat f(k)=0$ for $k\in\R$.
 The case $\underline{c}=c_+$ can be studied similarly.
These considerations show that condition {\bf IV} can in fact be weakened:
 
\begin{theorem}\label{lemun}
Given $\mathcal S_m^n(c_+, c_-)$ satisfying conditions {\bf I--III}, let the function $F_\pm(x)$ be defined by \eqref{lemF}.
Suppose it satisfies the condition\\[2mm]
${\bf IV}^{\weak}$.  The function $F_\pm(x)$ is absolutely continuous  with $F_\pm^\prime\in L^1(\R_\pm)\cap L^1_{\loc}(\R)$. For any  $x_0\in \R$ there exists a positive continuous function $\tau_\pm(x, x_0)$, decreasing as $x\to\pm\infty$, with $\tau_\pm(\cdot, x_0)\in L^1(\R_\pm)$ and such that $|F_\pm(x)|\leq \tau_\pm(x, x_0)$ for $\pm x\geq \pm x_0$.\\[2mm]
Then 
\begin{enumerate}[(i)]
\item 
For each $x$, equation \eqref{ME} has a unique solution $K_\pm(x,\cdot)\in L^1([x,\pm\infty))$.
\item
This solution has first order partial derivatives satisfying 
\[
\frac{d}{dx} K_{\pm}(x,x)\in L^1(\R_\pm)\cap L^1_{\loc}(\R).
\]
\item The function
\beq\label{dopoln}
\phi_\pm(\la, x)= \E^{\pm\I k_\pm x}\pm\int_x^{\pm\infty} K_\pm(x,y)\E^{\pm\I k_\pm y}dy
\eeq
solves the equation
\[
-y^{\prime\prime}(x)\mp 2 y(x)\frac{d}{dx} K_{\pm}(x,x)= (k_\pm)^2 y(x),\quad x\in\R.
\]
\item 
If $F_\pm$ satisfies condition {\bf IV}, then $q_\pm(x):=\mp 2\frac{d}{dx} K_{\pm}(x,x)\in \mathcal L_m^n(\R_\pm)$.
\end{enumerate}
\end{theorem}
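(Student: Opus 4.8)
The plan is to dispatch the four assertions in turn. Parts (i)--(iii) are the classical Marchenko reconstruction adapted to the steplike geometry, and are essentially assembled from material already in place; part (iv), the regularity and weighted decay of the reconstructed potential, is where the genuine work lies.

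\emph{Part (i).} First I would re-examine the solvability argument carried out above equation \eqref{5.102} and observe that it uses only condition ${\bf IV}^{\weak}$ rather than the full condition \textbf{IV}. Compactness of the operator $\mathcal F_{\pm,x}$ on $L^1([x,\pm\infty))$ follows from \cite[Lem.~3.3.1]{M} once the kernel $F_\pm(t+y+2x)$ is dominated by the $L^1$ majorant $\tau_\pm$. By the Fredholm alternative it then suffices to rule out nontrivial solutions of the homogeneous equation; the bound $|F_\pm(x)|\leq\tau_\pm(x,x_0)$ shows that any $L^1$ solution is bounded, hence lies in $L^2([x,\pm\infty))$, so the Plancherel computation already displayed applies verbatim. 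Property \textbf{I.\ (c)} together with the nonnegativity of the discrete and multiplicity-one contributions forces $\widehat f\equiv 0$ whenever $|R_\pm|<1$ on the full real $k$-axis; for the remaining operator, where the strict inequality holds only on part of the axis, one concludes $\widehat f=0$ on the corresponding rays and extends by analytic continuation across them. In both cases $f=0$, so \eqref{ME} is uniquely solvable for each fixed $x$, and the continuous dependence of $(I+\mathcal F_{\pm,x})^{-1}$ on $x$ supplies the required regularity in the parameter.

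\emph{Parts (ii) and (iii).} Here I would follow the standard reconstruction. Using that $F_\pm$ is absolutely continuous with $F_\pm^\prime\in L^1(\R_\pm)\cap L^1_{\loc}(\R)$, I would differentiate the GLM equation \eqref{ME} with respect to $x$ and $y$, the integrability of $F_\pm^\prime$ and the uniform boundedness of the resolvent justifying differentiation under the integral sign. This produces first-order partial derivatives of $K_\pm(x,y)$; restricting to the diagonal and using $2K_\pm(x,x)=\pm\int_x^{\pm\infty}q_\pm(\xi)d\xi$ yields $\frac{d}{dx}K_\pm(x,x)\in L^1(\R_\pm)\cap L^1_{\loc}(\R)$, which is (ii). For (iii) I would verify that $K_\pm$ satisfies the hyperbolic equation $\partial_x^2 K_\pm-\partial_y^2 K_\pm=q_\pm(x)\,K_\pm$ with $q_\pm(x)=\mp 2\frac{d}{dx}K_\pm(x,x)$ and the boundary relation read off from \eqref{ME}; substituting the representation \eqref{dopoln} and integrating by parts then shows that $\phi_\pm$ solves the stated Schr\"odinger equation with spectral parameter $(k_\pm)^2$.

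\emph{Part (iv).} Under the full condition \textbf{IV} one has $F_\pm^\prime\in\mathcal L_m^n(\R_\pm)$, and the task is to propagate this smoothness and weighted decay from $F_\pm$ to $K_\pm$, reversing the direct estimates of Lemma \ref{Best}. Passing to the equivalent form \eqref{MEB} in the variables $\hat F_\pm(x)=2F_\pm(2x)$ and $B_\pm(x,y)=2K_\pm(x,x+2y)$, I would differentiate \eqref{MEB} repeatedly as in \eqref{derivGLM}, solve the resulting equations for the derivatives of $B_\pm$ via the uniformly bounded resolvent $(I+\mathcal F_{\pm,x})^{-1}$, and set up weighted estimates analogous to \eqref{Best1} showing that each $\partial_x^l\partial_y^{s-l}B_\pm$ inherits the $m$ moments and $n$ derivatives of $\hat F_\pm$. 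Finally, the relation $q_\pm(x)=\mp\frac{d}{dx}B_\pm(x,0)$ from \eqref{B-K} delivers $q_\pm\in\mathcal L_m^n(\R_\pm)$. The hard part will be the bookkeeping of the weighted norms: one must control the convolution structure of \eqref{MEB} so that the polynomial weights $(1+|x|^m)$ pass through the integral operator while the $n$ successive $y$-derivatives are tracked and all constants kept uniform on the half-lines $\pm x\geq\pm x_0$. This is precisely the inverse counterpart of the induction used in the proof of Lemma \ref{Best}.
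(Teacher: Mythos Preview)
Your outline for parts (i) and (iii) matches the paper's, and your plan for (iv) is in the right spirit, though the paper differentiates \eqref{MEB} only in $x$ (not via \eqref{derivGLM}, which differentiates in $y$ and was used in the \emph{direct} problem) and builds a recursive bound $\rho_{\pm,i}(x)$ for $\int_{\R_\pm}|\partial_x^i B_\pm(x,y)|\,dy$; no mixed $\partial_x^l\partial_y^{s-l}$ estimates are needed.

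The genuine gap is in part (ii). Your sentence ``restricting to the diagonal and using $2K_\pm(x,x)=\pm\int_x^{\pm\infty}q_\pm(\xi)\,d\xi$ yields $\frac{d}{dx}K_\pm(x,x)\in L^1(\R_\pm)\cap L^1_{\loc}(\R)$'' is circular: in the inverse problem $q_\pm$ is \emph{defined} as $\mp 2\frac{d}{dx}K_\pm(x,x)$, so that integral formula is a tautology and cannot deliver the $L^1$ property. Merely knowing that $\partial_x K_\pm$ and $\partial_y K_\pm$ exist does not put the diagonal derivative into $L^1(\R_\pm)$; you need a quantitative pointwise bound, and this is exactly where the decreasing majorant $\tau_\pm(\cdot,x_0)$ from ${\bf IV}^{\weak}$ enters, which your sketch for (ii) never invokes.

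The paper's argument runs as follows. Work with $B_\pm(x,y)=2K_\pm(x,x+2y)$ and $\hat F_\pm(x)=2F_\pm(2x)$. From \eqref{MEB} and the resolvent bound $\|(I+\hat{\mathcal F}_{\pm,x})^{-1}\|\le C_\pm(x)$ one first gets $\int_{\R_\pm}|B_\pm(x,y)|\,dy\le C_\pm(x)\tau_{\pm,0}(x)$, and then, feeding this back into \eqref{MEB} and using the \emph{pointwise} majorant $|\hat F_\pm(x+y+s)|\le 2\tau_\pm(2x+2y,2x_0)$ (valid for all $s\in\R_\pm$), the pointwise estimate $|B_\pm(x,y)|\le C(x_0)\,\tau_\pm(2x+2y,2x_0)$. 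Differentiating \eqref{MEB} in $x$ gives a Fredholm equation for $\partial_x B_\pm(x,\cdot)$ with the \emph{same} kernel $\hat F_\pm$; the resolvent bound yields an $L^1$ estimate for $\partial_x B_\pm(x,\cdot)$, and then setting $y=0$ in the differentiated equation and inserting the two previous bounds produces
\[
|\partial_x B_\pm(x,0)|\le |\hat F_\pm'(x)|+C(x_0)\,\tau_{\pm,1}(x)\,\tau_\pm(2x,2x_0),
\]
where $\tau_{\pm,1}(x)=\int_{\R_\pm}|\hat F_\pm'(t+x)|\,dt$. The right-hand side lies in $L^1(\R_\pm)\cap L^1_{\loc}(\R)$ by ${\bf IV}^{\weak}$, which is (ii). The monotone $L^1$ majorant is not a technicality here: without it you cannot pass from integrated bounds on $B_\pm$ and $\partial_x B_\pm$ to the pointwise bound at $y=0$ that controls $q_\pm$.
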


\begin{proof}
If $F_\pm$ satisfies condition {\bf IV} for any $m\geq 1$ and $n\geq 0$, then at least
 $F^\prime_\pm\in L_1^0(R_\pm)$, and we can choose $\tau_\pm(x,x_0)=\tau_\pm(x)=\int_{\R_\pm} |F^\prime(x+t)|dt$. Since $|F_\pm(x)|\leq \tau_\pm(x)$ and $\tau_\pm(\cdot)\in L^1(\R_\pm)$ is decreasing as $x\to\pm\infty$, condition ${\bf IV}^{\weak}$ is fulfilled.
 
Item (i) is already proved under the condition $F_\pm\in L^1(\R_\pm)\cap L^1_{\loc}(\R)$  and $F^\prime\in L^1_{\loc}(\R)$, which is weaker than ${\bf IV}^{\weak}$.  Therefore, we have a solution $K_\pm(x,y)$. To prove (ii) it is sufficient to prove $B^\prime_{\pm,x}=\frac{\pa}{\pa x}B_\pm(x,0)\in L^1[x_0,\pm\infty)$ for any $x_0$ fixed, where $B_\pm(x,y)=2K_\pm(x,x+2y)$.

 Let $\pm x\geq \pm x_0$.
Consider the GLM equation in the form \eqref{MEB}. By (i), the operator $I+\hat{\mathcal F}_{\pm, x}$ generated by the kernel $\hat F_\pm$ is also invertible and admits estimate $\|\{I+\hat {\mathcal F}_{x,\pm}\}^{-1}\|\leq C_\pm(x)$, where $C_\pm(x)$, $x\in\R$ is a continuous function with $C_\pm(x)\to 1$ as $x\to \pm \infty$.
Introduce the notations
\[\tau_{\pm,1}(x)=\int_{\R_\pm}|\hat F_\pm^\prime(t+x)|dt,\  \ \tau_{\pm,0}(x)=\int_{\R_\pm}|\hat F_\pm(t+x)|dt.
%\tau_{\pm,0,1}(x)=\int_{\R_\pm}\tau_{\pm,0}(t+x)dt,\ 
 \]
Note that $\abs{\hat F_\pm(x)}\leq \tau_{\pm,1}(x)$. From the other side, 
$\abs{\hat F_\pm(x)}\leq 2\tau_{\pm}(2x,2x_0)$, where $\tau_{\pm}(x,x_0)$ is the function from
condition ${\bf IV}^{\weak}$. From \eqref{MEB}, we have
\beq \label{ift} \int_{\R_\pm}\abs{B_\pm(x,y)}dy\leq
\norm{\{I+\hat{\mathcal F}_{\pm,x}\}^{-1}} \int_{\R_\pm} \abs{\hat F_\pm(y+x)}dy\leq C_\pm(x)\tau_{\pm,0}(x),\eeq
and, therefore,
\begin{align}\label{imp10}
|B_\pm(x,y)|\leq  &|\hat F(x+y)| +\int_{\R_\pm}|B_\pm(x,s)\hat F(x+y+s)|ds\\ \nn
\leq &\tau_\pm(2x+2y, 2x_0)(1+C_\pm(x)\tau_{\pm,0}(x))\leq C(x_0)\tau_\pm(2x+2y, 2x_0).
\end{align}
Being the solution of \eqref{MEB} with absolutely continuous kernel $\hat F_\pm$, the function
$B_\pm(x,y)$ is also absolutely continuous with respect to $x$ for every $y$.
Differentiate \eqref{MEB} with respect to $x$. Proceeding as in \eqref{ift}, we get then
\begin{align}\nn
\int_{\R_\pm}\abs{B_{\pm,x}^\prime(x,y)}dy\leq &
\norm{\{I+\hat{\mathcal F}_{\pm,x}\}^{-1}} \left( \int_{\R_\pm}\int_{\R_\pm}|B_\pm(x,t)\hat F^\prime(t+y+x)|dt dy\right.\\ +
& \left.\int_{\R_\pm} \abs{\hat F_\pm^\prime(y+x)}dy\right)\leq C_\pm(x)\left(\tau_{\pm,0}(x) + C_\pm(x)\tau_{\pm,1}(x)\tau_{\pm,0}(x)\right).\label{ifneed}
\end{align}
Now set $y=0$ in the derivative of \eqref{MEB} with respect to $x$. By use of \eqref{ift},
\eqref{ifneed} and ${\bf IV}^{\weak}$, we have then
\begin{align} \nn
|\hat F_\pm^\prime(x)+B_{\pm,x}^\prime(x,0)|\leq & \int_{\R_\pm} |B_{\pm,x}^\prime(x,t)\hat F_\pm(t+x)|dt
+\int_{\R_\pm} |B_\pm(x,t)\hat F_\pm^\prime(t+x)|dt\\
\nn \leq & C_\pm(x)(1+C_\pm(x)\tau_{\pm,1}(x))\tau_{\pm,0}(x) \tau_\pm(2x,2x_0) + H_\pm(x),
\end{align}
where $H_\pm(x)=\int_{\R_\pm}|B_\pm(x,t)\hat F_\pm^\prime(x+t)|dt$. By \eqref{imp10},
\[
H_\pm(x)\leq C(x_0) \int_{\R_\pm}\tau_\pm(2x +2t,2x_0)|\hat F_\pm^\prime(x+t)|dt\leq C(x_0)\tau_\pm(2x, 2x_0)\tau_{\pm,1}(x),
\]
which implies
\beq
|B^\prime_{\pm,x}(x,0)|\leq |\hat F^\prime(x)| +C(x_0)\tau_{\pm,1}(x) \tau_\pm(2x,2x_0).\label{main16}
\eeq
Therefore, under condition ${\bf IV}^{\weak}$, we get
$q_\pm(x):=B_{\pm,x}(x,0)\in L^1(\R^\pm)\cap L^1_{\loc}(\R)$, which proves (ii).

Repeating literally the corresponding part of the proof for Theorem 3.3.1 from \cite{M} we get item (iii) under condition ${\bf IV}^{\weak}$.

Now let $\hat F_\pm$ satisfy condition {\bf IV} for some $m\geq 1$ and $n\geq 0$. As
we already discussed, in this case one can replace $\tau_\pm(x,x_0)$ by $\tau_{\pm,0}(x)$, and then formulas \eqref{main16} and \eqref{imp10} read
\[
|B_\pm(x,y)|\leq C(x_0)\tau_{\pm,1}(x+y),\quad |B_{\pm,x}(x,0)|\leq C(x_0)\tau_{\pm,1}^2(x).
\]
Since $\tau_{\pm,1}(x)\in \mathcal L_{m-1}^1(\R_\pm)$ and $\tau_{\pm,1}^2(x)\in \mathcal L_m^0(\R_\pm)$ for $m\geq 1$, then $q_\pm(x)\in \mathcal L_m^0(\R_\pm)$. 
To prove the claim for higher derivatives, we proceed similarly. Namely, in agreement with previous notations, set
\[
\tau_{\pm,i}(x):=\int_{\R_\pm}\hat F_\pm^{(i)}(t+x)dt, \quad i=0,\dots,n+1,
\]
and also denote 
$D_{\pm}^{(i)}(x,y):=\frac{\pa^i}{\pa x^i}B_\pm(x,y)$. Denote by ${i\choose j}$ the binomial coefficients.
Differentiating \eqref{MEB} $i$ times with respect to $x$ implies
\[
\hat F_\pm^{(i)}(x+y)+D_{\pm}^{(i)}(x,y)= -\sum_{j=0}^{i}{ i\choose j} \int_{\R_\pm}
\hat F_\pm^{(j)}(x+y+t)D_\pm^{(i-j)}(x,t)dt,\] and, therefore, \begin{align}\nn
\int_{\R_\pm} \abs{D_\pm^{(i)}(x,y)}dy
 \leq&\| \{I+\hat F_{\pm,x}\}^{-1}\| \left \{\int_{\R_\pm} |\hat F_\pm^{(i)}(x+y)|dy \right .
 \\ \nn & \left . \sum_{j=1}^{i} { i\choose j} \int_{\R_\pm}\int_{\R_\pm} |\hat F_\pm^{(j)}(x+y+t)D_\pm^{(i-j)}(x,t)|dt dy\right\} \\ \nn
 \leq&  C_{\pm,i}(x)[\tau_{\pm,i-1}(x) +\sum_{j=1}^{i} \tau_{\pm,j}(x) \rho_{\pm,i-j}(x)],\end{align}
where $C_{\pm,i}(x):= K_i\|\{I+F_{\pm,x}\}^{-1}\| =K_i C_\pm(x)$ with $K_i=\max_{j\leq i} { i\choose j}$, and $\rho_{\pm,j}(x)$ is defined by the recurrence formula 
\[
\rho_{\pm,0}(x):= C_\pm(x)\tau_{\pm,0}(x),\ \ \rho_{\pm,s}:= C_{\pm,s}(x)[\tau_{\pm,s-1}(x)+\sum_{j=1}^{s} \tau_{\pm,j}(x) \rho_{\pm,s-j}(x)].
\] 
Thus, for every $i=1,\dots,n+1$,
\[
\int_{\R_\pm}|D^{(i)}_\pm(x,y)|dy\leq \rho_{\pm,i}(x)\in \mathcal L_{m-1}^0(\R_\pm).
\]
Respectively,
\[
|q_\pm^{(i)}(x)|= |D_\pm^{(i)}(x,0)|\leq
|F^{(i)}(x)| + \sum_{j=1}^{i} { i\choose j}\tau_{\pm,j}(x)\rho_{\pm,i-j}(x)\in \mathcal L_m^0(\R_\pm),
\]
which finishes the proof.
\end{proof}

Our next aim is to prove that the two functions $q_+(x)$ and $q_-(x)$ from the previous theorem do in fact coincide.

%-------------------%
\begin{theorem}\label{theor2}
Let the set  $\mathcal S_m^n(c_+,c_-)$ defined by \eqref{4.6} 
satisfy conditions \emph{\textbf{I}--\textbf{III}} and ${\bf IV}^{\weak}$. Then $q_-(x)\equiv q_+(x)=:q(x)$.
If $\mathcal S_m^n(c_+,c_-)$ satisfies conditions \emph{\textbf{I}--\textbf{IV}} then $q\in\mathcal L_m^n(c_+, c_-)$.
\end{theorem}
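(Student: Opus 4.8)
The plan is to work under hypotheses \textbf{I}--\textbf{III} and ${\bf IV}^{\weak}$ and to exploit the output of Theorem~\ref{lemun}: solving the two GLM equations \eqref{ME} produces kernels $K_\pm$, solutions $\phi_\pm(\la,x)$ of the form \eqref{dopoln}, and two reconstructed potentials $q_\pm(x)=c_\pm\mp 2\frac{d}{dx}K_\pm(x,x)$, with $\phi_\pm$ solving the equation of Theorem~\ref{lemun}(iii), i.e. $-\phi_\pm''+q_\pm\phi_\pm=\la\phi_\pm$. A priori $\phi_+$ and $\phi_-$ are Jost solutions of two \emph{different} Schr\"odinger operators, each controlled only on one half-axis (here $q_\pm\to c_\pm$ as $x\to\pm\infty$), so the assertion $q_+\equiv q_-$ is the genuinely global statement that the two one-sided reconstructions glue into a single steplike potential. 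My strategy is to show that the reconstructed pair satisfies the scattering relation \eqref{2.16} with the prescribed coefficients on $\Sigma^{(2)}$ and to read off the coincidence of the potentials from it.

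Fix $\la\in\Sigma^{(2)}$, so that both $k_+$ and $k_-$ are real, and introduce the defect
\[
g(\la,x):=T_-(\la)\,\phi_+(\la,x)-\overline{\phi_-(\la,x)}-R_-(\la)\,\phi_-(\la,x).
\]
The heart of the proof is to show $g\equiv 0$, which I would do by reversing the derivation of the GLM equation. Inserting \eqref{dopoln} and passing to the spectral variable $k_-$, one computes the contour integral of $g$ exactly as in the passage producing \eqref{secc}: the contribution of $\overline{\phi_-}$ reproduces the kernel $K_-$, while the $R_-\phi_-$ term, after rewriting $R_-$ and $T_-$ through the given data by means of the unitarity-type identities \textbf{I.\ (c)}, \textbf{(d)} on $\Sigma^{(2)}$ and the link \textbf{II.\ (a)} between $T_+$ and $T_-$, reassembles precisely the reflection, discrete, and multiplicity-one pieces of the kernel $F_-$ from \eqref{lemF}; the poles at $\I\kappa_j^-$ supply the discrete part through their residues, and the jump across $\Sigma^{(1)}$ the remaining integral. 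The GLM equation \eqref{ME} satisfied by $K_-$ then forces all these pieces to cancel, leaving a transform supported on the half-line opposite to $[x,\pm\infty)$, whence $g\equiv 0$. I expect this bookkeeping to be the main obstacle: one must verify that the reconstructed $\phi_+$ extends meromorphically in $\mathcal D$ with poles and residues dictated by the discrete data, keep track of the $\Sigma^{(1)}$ contribution, and above all control the edge behaviour at $\ul c$, which is exactly where Lemma~\ref{lemweak} and the integrability afforded by ${\bf IV}^{\weak}$ are needed to justify the contour deformations.

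Once $g\equiv 0$, the scattering relation $T_-(\la)\phi_+=\overline{\phi_-}+R_-\phi_-$ holds for all $\la\in\Sigma^{(2)}$. Since $q_-$ and $\la$ are real there, both $\phi_-$ and $\overline{\phi_-}$ solve $-y''+q_-y=\la y$, and $T_-(\la)\neq 0$; hence $\phi_+$ solves this same equation. But $\phi_+$ also solves $-y''+q_+y=\la y$ by Theorem~\ref{lemun}(iii). Subtracting the two equations gives $(q_+(x)-q_-(x))\,\phi_+(\la,x)=0$ for all $\la\in\Sigma^{(2)}$ and every $x$. For fixed $x$ the map $\la\mapsto\phi_+(\la,x)$ is continuous and satisfies $\phi_+(\la,x)=\E^{\I k_+ x}(1+o(1))\neq 0$ as $\la\to\infty$, so it cannot vanish identically on the interval $\Sigma^{(2)}$; therefore $q_+(x)=q_-(x)$ for every $x$, and we set $q:=q_+\equiv q_-$.

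Finally, the regularity statement is immediate from the previous theorem. If, in addition, condition \textbf{IV} holds, then Theorem~\ref{lemun}(iv) yields $q-c_+=-2\frac{d}{dx}K_+(x,x)\in\mathcal L_m^n(\R_+)$ and $q-c_-=2\frac{d}{dx}K_-(x,x)\in\mathcal L_m^n(\R_-)$, which is exactly the defining requirement for $q\in\mathcal L_m^n(c_+,c_-)$.
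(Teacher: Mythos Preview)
Your proposal has a genuine gap at its core. You plan to show directly that the defect
\[
g(\la,x)=T_-(\la)\,\phi_+(\la,x)-\overline{\phi_-(\la,x)}-R_-(\la)\,\phi_-(\la,x)
\]
vanishes by ``reversing the derivation of the GLM equation'' via a contour integral in $k_-$. But that derivation (cf.\ \eqref{pomog}, \eqref{secc}) crucially uses the identity \eqref{rest}, namely $\phi_+(\la_j,x)=\mu_j\phi_-(\la_j,x)$, to convert the residue of $T_-\phi_+$ at $\I\kappa_j^-$ into $\I\gamma_j^-\phi_-(\la_j,x)$, which is what produces the discrete part $F_{d,-}$ of the kernel. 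In the inverse problem $\phi_+$ and $\phi_-$ are reconstructed from \emph{separate} GLM equations and solve \emph{different} Schr\"odinger equations; there is no a priori reason why $\phi_+(\la_j,x)$ should be proportional to $\phi_-(\la_j,x)$, let alone with the prescribed constant $\mu_j$. That proportionality is in fact a \emph{consequence} of $q_+\equiv q_-$, not an ingredient you can feed into the argument. The same circularity affects the $\Sigma^{(1)}$ contribution: matching the jump of $T_-\phi_+$ to the $|T_\mp|^2$ integral in $F_-$ again presupposes relations between $\phi_+$ and $\phi_-$ on $\Sigma^{(1)}$ that are only available once the potentials coincide. So the ``bookkeeping'' you flag as the main obstacle is not bookkeeping at all---it is the entire content of the theorem, and your scheme gives no mechanism to supply it.

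This is precisely why the paper takes a different route. Instead of comparing $\phi_+$ with $\phi_-$ directly, it defines an auxiliary solution $\theta_\mp$ by the explicit formula \eqref{5.11}, built only from $\phi_\pm$, $K_\pm$, $G_\pm$ and the scattering data on the \emph{same} side, so that the relation $R_\pm\phi_\pm+\overline{\phi_\pm}=T_\pm\theta_\mp$ holds by construction on $\Sigma^{(2)}$. The work then shifts to proving $\theta_\pm\equiv\phi_\pm$: one shows via Lemma~\ref{3.3} that $\theta_\pm$ extends analytically with the right symmetry and asymptotics, forms the function $\Phi(\la)=(\phi_+\phi_--\theta_+\theta_-)/W(\la)$, checks it has no jumps across $\Sigma$ and only removable singularities (this is where \textbf{II.\ (b)}, \eqref{zavis}, and the unitarity relations \textbf{I.\ (b)--(d)} are actually used), and concludes $\Phi\equiv0$ by Liouville. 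A second Liouville argument on $p(\la,x)=\theta_+/\phi_+$ then gives $\theta_\pm=\phi_\pm$, and only at this point does one obtain that $\phi_+$ and $\phi_-$ solve the same equation. Your final paragraph on the regularity statement is correct and matches the paper.
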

%-------------------%

%-------------------%
\begin{proof}
This proof is a slightly modified version of the proof proposed in \cite{M}. We give it for the case $\ul c=c_-$. We continue to use the notation $\Sigma^{(2)}$ for the two sides of the cut along the interval $[\ol c,\infty)=[c_+,\infty)$, the notation $\Sigma$ for the two sides of the cut along the interval $[\ul c, \infty)=[c_-,\infty)$ and we also keep the  notation $\mathcal D=\C\setminus\Sigma$.

The main differences between the present proof and that from \cite{M} concern the presence of the spectrum of multiplicity one and the use of condition ${\bf IV}^{\weak}$. Namely, recall that the kernels of the GLM equations \eqref{ME} can be split naturally into the  summands
  $F_+=F_{\chi,+}+F_{d,+}+F_{r,+} $  and
$F_-=F_{r,-}+F_{d,-}$  according to \eqref{lemF}.

We begin by considering a part of the GLM equations
\[
G_\pm(x,y):=F_{r,\pm}(x+y)
\pm\int_x^{\pm\infty}K_\pm(x,t)F_{r,\pm}(t+y)d t,
\]
where
$K_\pm(x,y)$ are the solutions of GLM equations obtained in Theorem \ref{lemun}.  By condition
${\bf IV}^{\weak}$, we have $F_{r,\pm}\in L^2(\R)$, therefore, for any fixed $x$,
\[
\int_\R
F_{r,\pm}(x+y) \E^{\mp\I y k_\pm}d y =R_\pm(\la)\E^{\pm\I x k_\pm},
\]
and,  consequently, 
\begin{equation}\label{5.7}
\int_\R G_\pm(x+y) \E^{\mp\I k_\pm y}d y =R_\pm(\la)\phi_\pm(\la,x),
\quad k_\pm\in\R,
\end{equation}
where $\phi_\pm$ are the functions obtained in Theorem \ref{lemun}, and the integral is considered as a principal value.
On the other hand, invoking the GLM equations and the same functions $\phi_\pm$, we have 
\begin{align*}
G_+(x,y)
&=  -K_+(x,y) - \sum_{j=1}^p \gamma_j^+ \E^{-\kappa_j  y} \phi_+(\lambda_j,x) \\ 
& \quad -\frac{1}{4\pi}\int_{\ul c}^{\ol c} \frac{|T_-(\xi)|^2 }{k_-(\xi)}
\E^{\I k_+(\xi) y} \phi_+(\xi,x) d\xi,\quad y>x,\end{align*}
and 
\[
G_-(x,y)
= -K_-(x,y) + \sum_{j=1}^p \gamma_j^- \E^{\kappa_j y} \phi_-(\la_j,x),\quad y<x.
\]
Since for two points $k^\prime\neq k^{\prime\prime}$ \[\int_x^{\pm\infty} \E^{\pm\I\left( k^\prime - k^{\prime\prime}\right)y} dy=
\I\frac{\E^{\pm\I\left( k^\prime - k^{\prime\prime}\right)x}}{k^\prime - k^{\prime\prime}} ,\] then 
 \begin{align}\label{Gplus}
 &\int_\R G_+(x,y) \E^{-\I k_+ y} d y=
 \int^x_{-\infty}G_+(x,y) \E^{-\I k_+ y} d y
 -\int_x^{+\infty}K_+(x,y)\E^{-\I k_+ y} d y\\ \nn
  +&\frac{1}{4\pi\I}\int_{\ul c}^{\ol c} \frac{|T_-(\xi)|^2 \phi_+(\xi, x)\E^{\I (k_+(\xi) -k_+(\la)) x}}{
(k_+(\xi) - k_+(\la))\sqrt{\xi - c_-}} d\xi + \sum_{j=1}^p\,\gamma_j^+ \phi_+(\la_j,x)\,\frac{\E^{(-\I k_+ -\kappa_j^+) x}}
{\kappa_j^+  +\I k_+},
 \end{align}
 and
  \begin{align}\label{Gminus}
 \int_\R G_-(x,y) \E^{\I k_- y} d y= &
 \int_x^{+\infty}G_-(x,y) \E^{\I k_-y} d y
 -\int^x_{-\infty}K_-(x,y)\E^{\I k_- y} d y\\ \nn
 & +
 \sum_{j=1}^p\,\gamma_j^- \phi_-(\la_j, x)\,\frac{\E^{(\I k_- + \kappa_j^-) x}}
{\kappa_j^-  + \I k_-}\,.
 \end{align}
Since for $k_\pm\in\R$
\[
\pm\int_x^{\pm\infty} K_\pm(x,y)\E^{\mp \I k_\pm y}dy=\overline{\phi_\pm(\la,x)} - \E^{\mp \I k_\pm x},
\]
then, combining \eqref{Gplus} and \eqref{Gminus} with
 \eqref{5.7},  we infer the relations
 \begin{equation}\label{5.10}
 R_\pm(\lambda)\,\phi_\pm(\lambda,x) +
 \overline{\phi_\pm(\lambda,x)} =T_\pm(\lambda)\theta_\mp(\lambda,x),
 \quad k_\pm\in\R,
 \end{equation}
 where
 \begin{align}\nn
\theta_-(\lambda,x) &:=\frac{1}{T_+(\lambda)}\left(
\E^{-\I k_+  x} +\int^x_{-\infty}G_+(x,y)
\E^{-\I k_+ y} d y \right.\\ \label{5.11} &\qquad
-\int_{c_-}^{c_+}  \frac{|T_-(\xi)|^2 W_+(\xi,\la,x)}{
4\pi (\xi-\la)\sqrt{\xi - c_-}} d\xi+\left. \sum_{j=1}^p\,\gamma_j^+ \,\frac{W_+(\la_j,\la,x)}
{\la - \la_j}\right),\\ \nn
\theta_+(\lambda,x) &:=\frac{1}{T_-(\lambda)}\left(
\E^{\I k_- x} +\int_x^{+\infty}G_-(x,y)
\E^{\I k_- y}d y +\sum_{j=1}^p \gamma_j^- 
\frac{W_-(\la_j,\la,x)}{\lambda
- \lambda_j}\right),
\end{align}
and
\beq\label{dopp}
W_\pm(\xi,\la,x):=\I\phi_\pm(\xi,x)\E^{\pm\I (k_\pm(\xi) -k_\pm(\la)) x}( k_\pm(\xi) + k_\pm(\la)).
\eeq
It turns out that in spite of the fact that $\theta_\pm(\lambda,x)$ is defined via the
background solutions corresponding to the opposite half-axis $\R_\mp$, it
shares a series of properties with $\phi_\pm(\lambda,x)$.

\begin{lemma} \label{3.3}
The function $\theta_\pm(\la,x)$ possesses the following properties:
\begin{enumerate}[(i)]
\item
It admits an analytic continuation to the set $\mathcal D \setminus \{ c_+, c_- \}$ and is continuous up to its boundary $\Sigma$. 
\item It has no jump along the interval
$(-\infty, c_\pm]$, and it takes complex conjugated values
on the two sides of the cut along $[c_\pm,\infty)$.
\item For large $\la\in\clos(\mathcal D)$ it has the asymptotic behavior
$\theta_\pm(\la,x)= \E^{\pm\I k_\pm x}(1 + o(1))$.
\item The formula $W\left(\theta_\pm(\la,x),\phi_\mp(\la,x)\right)=\mp W(\la)$ is valid for $\la\in\clos(\mathcal D)$,
where $W(\la)$ is defined by formula \eqref{2.18}.
\end{enumerate}
\end{lemma}

\begin{proof}
The function  $T_\mp^{-1}(\la)$ 
admits an analytic continuation to $\mathcal D$ by property {\bf II.\ (a)}. Moreover, we have
$G_\mp(x,\cdot)\in L^1([x,\pm\infty))$. Since $\E^{\pm \I k_\mp y}$ does not grow as $\pm y\geq 0$  then the respective integral (the second summand in the representation for $\theta_\pm$) admits analytical continuation  also. The function $\theta_\pm$ does not have singularities at points $\{\la_1,\dots, \la_p\}$ since $T^{-1}_\mp(\la)$ has simple zeros at $\la_j$. The function $W_\mp(\xi,\la,x)$ can be continued analytically with respect to $\la$ for $\xi$ and $x$ fixed. Next, consider the Cauchy type integral term in \eqref{5.11}. The only singularity of the integrand can appear at point $\ul c= c_-$, because   in the resonance case $T_-(c_-)\neq 0$. 
 Thus, if $W(c_-)=0$, then the integrand in \eqref{5.11} behaves as $O(\xi -c_-)^{-1/2}$. By \cite{ Musch}, the integral is of order  $O(\xi -c_-)^{-1/2-\delta}$ for arbitrary small positive $\delta$, 
 moreover, $T_+^{-1}(\la)=C\sqrt{\la - c_-}(1+o(1))$. Therefore for $\la \to c_-$
\beq\label{thetaminus}
\theta_-(\la,x)=\begin{cases} O((\la -c_-)^{-\delta}), &  \mbox{if}\ W(c_-)=0,\\ O(1),& \mbox{if}\ W(c_-)\neq 0.\end{cases}
\eeq
Since $W(c_+)\neq 0$ by {\bf II.\ (a)}, then $T_+^{-1}(\la)=O(\la - c_+)^{-1/2}$, respectively
\beq\label{vazhn}
\theta_-(\la,x)= O\left( (\la - c_+)^{-1/2}\right),\quad \theta_+(\la,x)=O(1),\quad \la\to c_+.
\eeq
Properties (i) of Lemma \ref{propwronsk}, and {\bf II.\ (a)} together with \eqref{5.11} and \eqref{dopp} imply that $\theta_+$ and $\theta_-$ take complex conjugated values on the sides of cut along $[\ul c, \infty)$. Since $W_\pm(\xi,\la,x)\in\R$ when $\la, \xi\leq c_\pm$, then $\theta_\pm(\la,x)\in\R$ as $\la\leq c_-$.
Due to property {\bf I.\ (b)}, we have $ \overline{ T_-^{-1}} T_- = R_-$ on both sides of cut along $[\ul c, \ol c]$, and from \eqref{5.10} it follows that
\[
\theta_+=\phi_-\,\overline { T_-^{-1}}+\overline{\phi_-}\,T_-^{-1}\in\R.
\]
Therefore, $\theta_+$ has no jump along the interval $[\ul c, \ol c]$. At the point $\ul c=c_-$, the function $\theta_+(x,\la)$ has an isolated 
nonessential singularity, i.e. a pole at most. But at the vicinity of point $c_-$  $\theta_+(\la,x)=O(T_-^{-1}(\la))=O(\la - c_-)^{-1/2}$. Thus this singularity is removable,
\beq\label{vazhn2}
\theta_+(\la,x)=O(1),\quad \la\to c_-.
\eeq
Items (i) and (ii) are proved. 

The main term of asymptotical behavior for $\theta_\pm(\la,x)$ as $\la\to\infty$ is the first summand in \eqref{5.11}.
Thus, by {\bf I.\ (e)} and \eqref{kapm},
\[
\theta_\pm(\la,x)=T_\mp^{-1}(\la)\E^{\pm \I k_\mp\, x} +o(1)=\E^{\pm \I k_\pm\, x}(1 +o(1)), 
\]
which proves (iii). Property (iv) follows from \eqref{5.10}, \eqref{dopoln}, and \eqref{2.18} by analytic continuation.
\end{proof}

Now conjugate equality \eqref{5.10} and eliminate $\overline{\phi_\pm}$ from the system
\[
\left\{\begin{array}{ll}\ol{R_\pm\phi_\pm} + \phi_\pm & =\ol{\theta_\mp T_\pm},\\
R_\pm\phi_\pm +\ol\phi_\pm & =\theta_\mp T_\pm,
\end{array}\right. \quad k_\pm\in \R,
\]
to obtain
\[
\phi_\pm(1 - |R_\pm|^2)=\ol{\theta_\mp}\ol{T_\pm} - \ol{R_\pm}\theta_\mp T_\pm.
\]
Using    {\bf I.\ (c), (d)} and {\bf II} shows for $\la\in\Sigma^{(2)}$, that is for $k_+\in \R$, that
\[
T_\mp\phi_\pm=\ol{\theta_\mp} + R_\mp\theta_\mp\quad \la\in\Sigma^{(2)}.
\]
This equation together with \eqref{5.10} gives us a system from which we can eliminate the reflection coefficients $R_\pm$. We get
\beq\label{7new}
T_\pm(\phi_\pm\phi_\mp - \theta_\pm\theta_\mp)=\phi_\pm\ol\theta_\pm - \ol\phi_\pm\theta_\pm,\quad \la\in\Sigma^{(2)}.
\eeq
Next introduce a function
\[
\Phi(\la):=\Phi(\la,x)=\frac{\phi_+(\la,x)\phi_-(\la,x) - \theta_+(\la,x)\theta_-(\la,x)}{W(\la)},
\]
which is analytic in the domain $\clos(\mathcal D)\setminus\{\la_1,\dots,\la_p,\ul c, \ol c\}$.  Our aim is to prove that this function has no jump along the real axis and has removable singularities at the points $\{\la_1,\dots,\la_p, \ul c, \ol c\}$. Indeed, from \eqref{7new} and \eqref{2.18} we see that 
\[
\Phi(\la)=\pm\frac{\phi_\pm(\la,x)\ol{\theta_\pm(\la,x)} - \ol{\phi_\pm(\la,x)}\theta_\pm(\la,x)}{2\I k_\pm},\quad \la\in\Sigma^{(2)}.
\]
By the symmetry property (cf.\ {\bf II.\ (a)}, (iii), Theorem \ref{lemun} and (ii), Lemma \ref{3.3}), we observe that both the nominator and denominator
are odd functions of $k_+$, therefore $\Phi(\la+\I 0)=\Phi(\la -\I 0)$, as $\la\geq\ol c$, i.e., the function $\Phi(\la)$ has no jump along this interval.
By the same properties  {\bf II.\ (a)}, (iii) of Theorem \ref{lemun}  and (ii) of Lemma \ref{3.3} the function $\Phi(\la)$ has no jump on the interval $\la \leq \ul c$ as well.  Let us check that it has no jump along the interval $(\ul c, \ol c)$ also. Lemma \ref{3.3}, (ii) shows that the function $\theta_+(\la,x)$ has no jump here. Abbreviate
\[
[\Phi]=\Phi(\la+\I 0) - \Phi(\la-\I 0)=\phi_+\left[\frac{\phi_-}{W}\right]-\theta_+
\left[\frac{\theta_-}{W}\right],\quad \la\in(\ul c, \ol c),
\] 
and drop some dependencies for notational simplicity. Using property {\bf I , (b)} and formula \eqref{5.10}, we get
\[
\left[\frac{\phi_-}{W}\right]=\frac{\phi_-T_- + \ol{\phi_-T_-}}{2\I k_-}=\frac{(\phi_-R_- + \ol{\phi_-})\ol T_-}{2\I k_-}=\frac{\theta_+ T_-\ol T_-}{2\I k_-},
\]
that is, 
\beq\label{new10}
\phi_+\left[\frac{\phi_-}{W}\right]=\frac{\theta_+\phi_+ |T_-|^2 }{2\I k_-}.
\eeq
On the other hand, since $\I k_+\in\R$ as $\la<\ol c$, we have
\beq\left[\frac{\theta_-}{W}\right]=
\left[\frac{\theta_-T_+}{2\I k_+}\right]=\frac{1}
{2\I k_+}\left[\theta_-T_+\right].\label{9new}
\eeq
By \eqref{5.11} the jump of this function appears from the Cauchy type integral only. Represent this integral as
\[
-\frac{1}{2\pi\I}\int_{\ul c}^{\ol c}\frac{\phi_+(x,\xi)(-\I)(k_+(\la) + k_+(\xi))\E^{\I x(k_+(\xi) -k_+(\la))}|T_-(\xi)|^2}{2\I k_-(\xi)}\frac{d\xi}{\xi - \la},
\]
and apply the Sokhotski--Plemejl formula. Then \eqref{9new} implies
\[
\theta_+\left[\frac{\theta_-}{W}\right]=\frac{\theta_+\phi_+|T_-|^2}{2\I k_-}.
\]
Comparing this with \eqref{new10}, we conclude that the function $\Phi(\la)$ has no jumps on $\C$, but may have isolated singularities at the points $E=\la_1,\dots,\la_p, c_-,c_+ $ and $\infty$. Since all these singularities are at most isolated poles, it is sufficient to check that $\Phi(\la)=o((\la - E)^{-1})$, from some direction in the complex plane, to show that they are removable. First of all, properties {\bf I.\ (e)} and (iii), Lemma \ref{3.3} together with \eqref{2.18} and \eqref{dopoln} imply $\Phi(\la)\to 0$ as $\la\to\infty$.  The desired behavior $\Phi(\la)=o((\la - c_\pm)^{-1})$ for $\la\to c_\pm$ is due to property {\bf II} and estimates \eqref{thetaminus}, \eqref{vazhn}, \eqref{vazhn2}.
Next, to prove that there is no singularities at the points of the discrete spectrum, we have to check that
\beq\label{new11}
\phi_+(x,\la_j)\phi_-(x,\la_j)=\theta_+(x,\la_j)\theta_-(x,\la_j).
\eeq
Passing to the limit in both formulas \eqref{5.11} and taking into account \eqref{2.18} and \eqref{dopp} gives
\[
\theta_\mp(\la_k,x)=\frac{d W}{d\la} (\la_k)\,\phi_\pm(\la_k,x)\,\gamma^\pm_j,
\]
which together with \eqref{zavis} implies \eqref{new11}.
Since $\Phi(\la)$ is analytic in $\C$ and $\Phi(\la)\to 0$ as $\la \to\infty$, Liouville's theorem shows
\beq\label{equiv}
\Phi(x,\la)\equiv 0\quad\mbox{ for}\ \ \la\in\C,\ \ x\in\R.
\eeq

\begin{corollary}
$R_\pm(c_\pm)=-1$ if $W(c_\pm)\neq 0$.
\end{corollary}

\begin{proof}
In the case $\ul c=c_-$ discussed above we have $W(c_+)\neq 0$. Formula \eqref{equiv} implies that instead of \eqref{vazhn} we have in fact $\theta_-(x,\la)=O(1)$ as $\la\to c_+$. Since $T_+(c_+)=0$ and $\phi(x, c_+)=\ol {\phi(x, c_+)}$, then by \eqref{5.10} we conclude $R_+(c_+)=-1$.
Property $R_-(c_-)=-1$ in the nonresonant case is due to {\bf I.\ (b)}, \eqref{2.18}, and property $W(c_-)\in \R\setminus\{0\}$, which follows in turn from the symmetry property (i) of Lemma \ref{propwronsk}.
\end{proof}

Formula \eqref{equiv} implies  
 \beq\label{vazhn10}
 \phi_+(\la,x)\phi_-(\la,x)=\theta_+(\la,x)\theta_-(\la,x),\quad \la\in\C,\ \ x\in \R.
 \eeq
 Moreover,
\beq\label{vazhn8}
\phi_\pm(\la,x)\ol{\theta_\pm(\la,x)}=\ol{\phi_\pm(\la,x)}\theta_\pm(\la,x),\quad  \la\in \Sigma^{(2)}.
\eeq
It remains to show that $\phi_\pm(\la,x)=\theta_\pm(\la,x)$, or equivalently, that for all $\la\in\C$ and $x\in\R$
\[
 p(\la,x):=\frac{\phi_-(\la,x)}{\theta_-(\la,x)}=\frac{\theta_+(\la,x)}{\phi_+(\la,x)}\equiv 1.
\]
We proceed as in \cite{M}, Section 3.5, or as in in \cite{bet}, Section 5.
We first exclude from our consideration the discrete set $\mathcal O$ of parameters $x\in\R$ for which at least one of the following equalities
 is fulfilled: $\phi(E,x)=0$ for $E\in\{\la_1,\dots,\la_p, c_-, c_+\}$. We begin by showing that for each $x\notin \mathcal O$ the equality $\phi_+(\hat\la,x)=0$ implies the equality $\theta_+(\hat\la,x)=0$. Indeed, since $\hat\la\notin\{\la_1,\dots,\la_p, c_-, c_+\}$ we have $W(\hat\la)\neq 0$ and therefore by (iv) of Lemma \ref{3.3} that $\theta_-(\hat\la,x)\neq 0$. But then from \eqref{vazhn10} the equality $\theta_+(\hat\la,x)=0$
  follows. Thus the function $p(\la,x)$ is holomorphic in $\mathcal D$. By (ii) of Lemma \ref{3.3}, it has no jump along the set $(c_-, c_+)$, and by \eqref{vazhn8} it has no jump along $\la\geq c_+$. Since $\phi_+(c_\pm,x)\neq 0$, then \eqref{vazhn} and \eqref{vazhn2} imply that $p(\la,x)$ has removable singularities at $c_+$ and $c_-$. By (iii) of Lemma \ref{3.3} 
 $p(\la)\to 1$ as $\la\to\infty$, and by Liouville's theorem $p(\la,x)\equiv 1$ for $x\notin\mathcal O$. But the set $\mathcal O$ is discrete, therefore, by continuity  $\phi_\pm(\la,x)=\theta_\pm(\la,x)$ for all $\la\in\C$ and $x\in\R$. In turn this implies that $q_-(x)=q_+(x)$ and
completes the proof of Theorem \ref{theor2}. 
\end{proof}

\section{Additional properties of the scattering data}

In this section we study the behavior of the reflection coefficients as $\la\to\infty$ and its connection to the smoothness of the potential. One should emphasize that the rough estimate {\bf I.\ (e)} is sufficient for solving the inverse scattering problem (independent of the number of derivatives $n$), because this information is contained in property {\bf IV} of the Fourier transforms of the reflection coefficients. That is why we did not include the estimate from Theorem \ref{reflcomp} proved below in the list of necessary and sufficient conditions. On the other hand, this estimate plays an important role in application of the IST for solving the Cauchy problem for KdV equation with steplike initial profile. Lemma \ref{Weyllem} and Theorem \ref{reflcomp} clarify and improve corresponding results of \cite{bet} and are of independent interest for the spectral analysis of $L$.

We introduce the following notation: We will say that a function $g(\la)$, defined on the set $\mathcal A:=\Sigma\cap  \{\la\geq a\gg\ol c\}$, belongs to the space $L^2(\infty)$ if it satisfies the symmetry property $g(\la+\I 0)=\ol{g(\la - \I 0)}$ on $\mathcal A$, and
\[
\int_a^{+\infty}|g(\la)|^2\frac{d\la}{|\sqrt\la|}<\infty.
\]
Note that this definition implies $g(\la)\in L^2_{\{k_\pm\}}(\R\setminus (-a,a))$ for sufficiently large $a$.

\begin{theorem}\label{reflcomp}
Let $q\in \mathcal L_m^n(c_+, c_-)$, $m,n\geq 1$. Then for $\la\to\infty$
\[
\frac{d^s}{d k_\pm^s} R_\pm(\la)=g_{\pm,s}(\la)\,\la^{-\frac{n+1}{2}},\quad s=0,1,\dots, m-1,
\]
where $g_{\pm,s}(\la)\in L^2(\infty)$.
\end{theorem}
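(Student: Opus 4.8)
The plan is to read off the large-$\la$ behaviour of $R_\pm$ directly from the scattering formula \eqref{2.17}, evaluating the Wronskians at the fixed point $x=0$ (they are independent of $x$) and inserting the Fourier representations of the Jost solutions supplied by \eqref{Jost11}. Writing $h_\pm(\la)=\phi_\pm(\la,0)$ and $b_\pm(\la)=\phi_\pm'(\la,0)\mp\I k_\pm h_\pm(\la)$, so that
\[
h_\pm(\la)-1=\pm\int_0^{\pm\infty}B_\pm(0,y)\E^{\pm2\I k_\pm y}dy,\qquad b_\pm(\la)=\pm\int_0^{\pm\infty}\frac{\pa B_\pm}{\pa x}(0,y)\E^{\pm2\I k_\pm y}dy,
\]
a short computation gives
\[
W(\phi_\mp,\ol{\phi_\pm})(0)=\bigl(h_\mp\ol{b_\pm}-\ol{h_\pm}b_\mp\bigr)\mp\I(k_\pm-k_\mp)\,h_\mp\ol{h_\pm},
\]
and $R_\pm=-W(\phi_\mp,\ol{\phi_\pm})(0)/W(\la)$ with $W(\la)=2\I k_\pm(1+O(\la^{-1/2}))$ by \eqref{2.18} and {\bf I.\ (e)}. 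Thus everything reduces to one–sided Fourier transforms of $B_\pm(0,\cdot)$ and $\pa_xB_\pm(0,\cdot)$, whose smoothness in $y$ is exactly what Lemma~\ref{Best} controls.

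The decay is produced by integrating these Fourier integrals by parts. By \eqref{Best1}, for $1\le s\le n+1$,
\[
\frac{\pa^s}{\pa y^s}B_\pm(0,y)=\mp q_\pm^{(s-1)}(y)+E_{\pm,s}(y),\qquad |E_{\pm,s}(y)|\le C\,\nu_{\pm,s}(0)\,\nu_{\pm,s}(y),
\]
and since $q_\pm\in\mathcal L_m^n(\R_\pm)$ with $m\ge1$, the functions $\sigma_{\pm,i},q_\pm^{(i)}$ entering $\nu_{\pm,s}$ lie in $L^1\cap L^\infty\subset L^2(\R_\pm)$, whence $E_{\pm,s}\in L^2(\R_\pm)$; likewise $q_\pm^{(i)}\in L^1\cap L^\infty\subset L^2$ for $i\le n-1$, while $q_\pm^{(n)}$ is only $L^1$ with finite $m$-th moment. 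Integrating $b_\pm$ by parts produces a string of boundary terms with coefficients $\pa_y^iB_\pm(0,0)\approx\mp q_\pm^{(i)}(0)$, of orders $k_\pm^{-(i+1)}$, plus a remainder equal to $(2\I k_\pm)^{-n}$ times a Fourier transform of $\pa_y^n\pa_xB_\pm(0,\cdot)$.

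The key point, and the main obstacle, is that none of these boundary terms may survive in $R_\pm$: each decays far too slowly, so all must cancel. I would organise the cancellation as follows. For $i\ge1$ one has $q_\pm^{(i)}(0)=q_\mp^{(i)}(0)=q^{(i)}(0)$ (the two backgrounds differ only by a constant), so the boundary terms of $\ol{b_\pm}$ and $b_\mp$ of orders $k^{-2},\dots,k^{-n}$ cancel in $h_\mp\ol{b_\pm}-\ol{h_\pm}b_\mp$; the remaining $O(k^{-1})$ term carries the step $q_\mp(0)-q_\pm(0)=c_\pm-c_\mp$ and is cancelled precisely by $\mp\I(k_\pm-k_\mp)h_\mp\ol{h_\pm}$ via the expansion \eqref{kapm}. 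After these cancellations the surviving contribution is the \emph{full-line} Fourier transform of $q^{(n)}$ together with the $L^2$ errors $E_{\pm,s}$; rewriting $\widehat{q^{(n)}}(2k)=2\I k\,\widehat{q^{(n-1)}}(2k)$ (legitimate since $q^{(n-1)}\to0$) trades the non-$L^2$ factor $q^{(n)}$ for $q^{(n-1)}\in L^2$ at the cost of one power of $k$, after which Plancherel places the resulting coefficient in $L^2$. Collecting the powers of $k$ gives $R_\pm=g_{\pm,0}(\la)\,\la^{-(n+1)/2}$ with $g_{\pm,0}\in L^2(\infty)$.

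For the $k_\pm$-derivatives I would differentiate the Fourier integrals $s$ times, which inserts a factor $y^s$ in each integrand; these are absorbed by the moments, since $y^s\sigma_{\pm,i}(y)$, $y^sq_\pm^{(i)}(y)$ and $y^sE_{\pm,s}(y)$ remain in $L^1\cap L^2(\R_\pm)$ exactly for $s\le m-1$ — this is where the $m$-th moment in $q_\pm\in\mathcal L_m^n(\R_\pm)$ and the restriction $s\le m-1$ enter, consistent with the $(m-1)$-fold differentiability of $\phi_\pm$ in $k_\pm$ established above. Repeating the cancellation analysis verbatim with the extra factor $y^s$ then yields $\frac{d^s}{dk_\pm^s}R_\pm=g_{\pm,s}(\la)\,\la^{-(n+1)/2}$ with $g_{\pm,s}\in L^2(\infty)$. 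The delicate part of the whole argument is precisely the bookkeeping of the cancellations in the previous paragraph, combined with verifying — through the $\nu$-estimates of Lemma~\ref{Best} — that every surviving Fourier datum genuinely lies in $L^2(\R_\pm)$.
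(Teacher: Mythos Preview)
Your overall strategy---reading off $R_\pm$ from \eqref{2.17} at $x=0$ and integrating the Fourier representations of $h_\pm$ and $b_\pm$ by parts---is sound in outline, but the cancellation mechanism you invoke does not actually work as stated. You claim that the boundary terms of $\ol{b_\pm}$ and $b_\mp$ at orders $k^{-2},\dots,k^{-n}$ cancel in $h_\mp\ol{b_\pm}-\ol{h_\pm}b_\mp$ because $q_+^{(i)}(0)=q_-^{(i)}(0)$ for $i\ge1$. But the boundary terms produced by integration by parts are \emph{not} simply $\mp q_\pm^{(i)}(0)$: by Lemma~\ref{expansion} the coefficient of $(\pm2\I k_\pm)^{-j}$ in $b_\pm$ is $u_{\pm,j}'(0)$, and already for $j=2$ the recursion \eqref{uu5} gives $u_{\pm,2}'(0)=-q'(0)+q_\pm(0)\,u_{\pm,1}(0)$, where $u_{\pm,1}(0)=-\int_0^{\pm\infty}q_\pm$ is a half-line integral that differs for the two sides. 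Likewise the coefficients $u_{\pm,j}(0)$ appearing in $h_\pm$ are nonlocal. So neither the $b$-boundary terms alone, nor the errors in \eqref{Best1} (which are nonzero constants at $y=0$, not $L^2$ functions of $\la$), nor the cross terms $(h_\mp-1)\ol{b_\pm}$ etc.\ can be discarded; the cancellation at each order $k^{-j}$ is a genuinely nontrivial identity among all of them, which your argument does not supply.

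The paper circumvents this bookkeeping by passing to the Weyl functions $m_\pm=\phi_\pm'/\phi_\pm$: since $W(\phi_\mp,\ol{\phi_\pm})=\phi_\mp\,\ol{\phi_\pm}\,(\ol{m_\pm}-m_\mp)$, it suffices to show that the polynomial parts of $m_+$ and $m_-$ (in powers of $\sqrt\la^{-1}$) coincide. This is automatic once one observes (Lemma~\ref{Weyllem}) that $m_\pm$ satisfies the \emph{same} Riccati equation with potential $q$, so the expansion coefficients $m_j(x)$ obey the recursion \eqref{rec}, which is a \emph{local} differential polynomial in $q$ and hence independent of the side. That is the missing idea: the nonlocal half-line integrals hidden in the $u_{\pm,j}$ cancel precisely because the ratio $\phi_\pm'/\phi_\pm$ depends, to each finite order, only on the germ of $q$ at $x$. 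Your direct approach could in principle be pushed through, but only by proving an equivalent combinatorial identity among the $u_{\pm,j}(0),\,u_{\pm,j}'(0)$; the Riccati route is what makes the cancellation transparent. The same issue recurs for the $k_\pm$-derivatives, so the $s\ge1$ cases inherit the gap.
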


Note that the case $n=0$ and $m=1$ already follows from Lemma \ref{rem7}, since (using the notation of its proof) $R_\pm(\la)=f_{6,\pm}k_\pm^{-1}$ admits $m-1$ derivatives with respect to $k_\pm$ for $m>1$, and $f_{6,\pm}^{(s)}\in  L^2_{\{k_\pm\}}(\R\setminus (-a,a))$. The general case will be
shown at the end of this section.
Using Lemma \ref{Best} and formula \eqref{der4}, we can specify an asymptotical expansion for the Jost solution of equation \eqref{Sp} with a smooth potential.

\begin{lemma}\label{expansion}
Let  $q\in\mathcal L_m^n(c_+, c_-)$ and $q_\pm(x)=q(x) - c_\pm$. Then for large $k_\pm\in\R$ the  Jost solution $\phi_\pm(\la,x)$ of the equation $L\phi_\pm=\la\phi_\pm$ admits an asymptotical expansion
\beq\label{exp1}
\phi_\pm(\la,x)=\E^{\pm\I k_\pm x}\left( u_{\pm,0}(x) \pm \frac{u_{\pm,1}(x)}{ 2\I k_\pm} +\dots+
\frac{u_{\pm,n}(x)}{(\pm 2\I k_\pm)^n} + \frac{U_{\pm,n}(\la,x)}{(\pm 2\I k_\pm)^{n+1}}\right),
\eeq
where
\beq\label{uu5}
u_0(x)=1,\quad u_{\pm,l+1}(x)=\int_x^{\pm\infty} (u_{\pm,l}^{\prime\prime}(\xi) - q_\pm(\xi)u_{\pm,l}(\xi))d\xi,\quad l=1,\dots,n.
\eeq
Moreover, the functions $U_{\pm,n}(\la,x)$ and $\frac{\pa}{\pa x}U_{\pm,n}(\la,x)$ are $m-1$ times differentiable with respect to $k_\pm$ with the following behavior as $\la\to\infty$ and $ 0\leq s\leq m-1 $:
\beq\label{2}
\frac{\partial^s}{\partial k_\pm^s} U_{\pm,n}(\la, x)\in L^2(\infty),\quad
\frac{\partial^s}{\partial k_\pm^s} \left(\frac{1}{k_\pm}\frac{\partial}{\partial x}U_{\pm,n}(\la, x)\right)\in L^2(\infty).
\eeq
\end{lemma}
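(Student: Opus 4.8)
The plan is to start from the Deift--Trubowitz representation \eqref{Jost11} and integrate by parts $n+1$ times. By Lemma~\ref{Best} the kernel $B_\pm(x,\cdot)$ has $n+1$ $y$-derivatives which, together with their $x$-derivatives up to total order $n+1$, obey the pointwise bounds \eqref{Best1}; in particular each $\partial_y^{r}B_\pm(x,y)$ decays as $\pm y\to+\infty$, so the boundary terms at infinity vanish and the integrations by parts are legitimate. Writing $b_{\pm,r}(x):=\partial_y^{r}B_\pm(x,y)|_{y=0}$, repeated integration by parts in $\pm\int_0^{\pm\infty}B_\pm(x,y)\E^{\pm2\I k_\pm y}dy$ yields
\[
\phi_\pm(\la,x)=\E^{\pm\I k_\pm x}\left(1+\sum_{r=0}^{n}\frac{(-1)^{r+1}b_{\pm,r}(x)}{(\pm2\I k_\pm)^{r+1}}+\frac{(-1)^{n+1}}{(\pm2\I k_\pm)^{n+1}}\int_0^{\pm\infty}\frac{\partial^{n+1}B_\pm(x,y)}{\partial y^{n+1}}\E^{\pm2\I k_\pm y}dy\right).
\]
Comparison with \eqref{exp1} identifies $u_{\pm,0}=1$, $u_{\pm,l}(x)=(-1)^{l}b_{\pm,l-1}(x)$ for $1\le l\le n$, and
\[
U_{\pm,n}(\la,x)=(-1)^{n+1}\int_0^{\pm\infty}\frac{\partial^{n+1}B_\pm(x,y)}{\partial y^{n+1}}\E^{\pm2\I k_\pm y}dy.
\]
That these coefficients satisfy the recursion \eqref{uu5} I would verify either directly from the differentiated integral equations \eqref{der1}--\eqref{der9}, or, more cleanly, by inserting the finite part of the expansion into $-\phi_\pm''+q_\pm\phi_\pm=k_\pm^2\phi_\pm$ and matching the powers $(\pm2\I k_\pm)^{-l}$, $0\le l\le n$, which forces exactly $u_{\pm,l+1}=\int_x^{\pm\infty}(u_{\pm,l}''-q_\pm u_{\pm,l})$ with $u_{\pm,0}=1$.

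The analytic heart is the remainder. Using the leading-term estimate in \eqref{Best1} (with $s=n+1$, $l=0$) I split $\partial_y^{n+1}B_\pm(x,y)=\mp q_\pm^{(n)}(x+y)+E_\pm(x,y)$, where $|E_\pm(x,y)|\le C_\pm(x)\nu_{\pm,n+1}(x)\nu_{\pm,n+1}(x+y)$. Hence, up to sign, $U_{\pm,n}$ is the half-line Fourier transform (in $k_\pm$) of $q_\pm^{(n)}(x+\cdot)$ plus the transform of $E_\pm(x,\cdot)$. The functions $\sigma_{\pm,i}$ and $q_\pm^{(i)}$ ($i\le n-1$) entering $\nu_{\pm,n+1}$ (see \eqref{defsig1}, \eqref{decay6}) are bounded and lie in $L^1(\R_\pm)$, hence in $L^2(\R_\pm)$, so the error term is the transform of an $L^2$ function and lies in $L^2(\infty)$ by Plancherel; the leading term is treated by the same Plancherel argument. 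Differentiating $s$ times in $k_\pm$ pulls a factor $(\pm2\I y)^s$ under the integral, so $\partial_{k_\pm}^s U_{\pm,n}$ is the transform of $y^s\partial_y^{n+1}B_\pm(x,\cdot)$; the weight $(1+|y|^m)$ built into the class $\mathcal L_m^n$ keeps these weighted kernels in $L^1\cap L^2(\R_\pm)$ exactly for $0\le s\le m-1$, which is the source of the range of $s$ and gives the first inclusion in \eqref{2}.

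For $\partial_x U_{\pm,n}$ the differentiability supplied by Lemma~\ref{Best} is one order short, since differentiating the integrand produces $\partial_x\partial_y^{n+1}B_\pm$, of total order $n+2$. Here I would invoke the relation \eqref{der4}, $\partial_x B_\pm=\partial_y B_\pm\mp\int_x^{\pm\infty}q_\pm(\alpha)B_\pm(\alpha,\cdot)\,d\alpha$, to trade the extra $x$-derivative for a $y$-derivative: this turns $\partial_x\partial_y^{n+1}B_\pm$ into $\partial_y^{n+2}B_\pm$ minus an $x$-integral of $q_\pm\,\partial_y^{n+1}B_\pm$. A single further integration by parts in the $\partial_y^{n+2}B_\pm$ contribution reproduces, besides an $x$-dependent boundary term $\propto b_{\pm,n+1}(x)$, a term proportional to $k_\pm U_{\pm,n}$. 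This is precisely why the statement carries the prefactor $1/k_\pm$: it cancels that growing factor and reduces $\tfrac1{k_\pm}\partial_x U_{\pm,n}$ to a bounded combination of $U_{\pm,n}$, a transform of $q_\pm\,\partial_y^{n+1}B_\pm$, and $b_{\pm,n+1}(x)/k_\pm\sim\la^{-1/2}$, each of which lies in $L^2(\infty)$ by the previous paragraph. Applying $\partial_{k_\pm}^s$ with the Leibniz rule and the same moment bookkeeping yields the second inclusion in \eqref{2}.

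I expect the main obstacle to be exactly the uniform $L^2(\infty)$ control of the $k_\pm$-derivatives of the remainder: one must combine the pointwise moment estimates of Lemma~\ref{Best} with Plancherel's theorem and track how each $k_\pm$-differentiation consumes one power of the available moment, so that the admissible range $0\le s\le m-1$ emerges naturally and no more. The secondary difficulty is the $\partial_x$-term, where the transformation-operator identity \eqref{der4} and the compensating factor $1/k_\pm$ are essential in order to remain within the $n+1$ derivatives that Lemma~\ref{Best} actually provides.
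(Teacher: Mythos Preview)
Your overall strategy---integrate \eqref{Jost11} by parts $n+1$ times, read off $u_{\pm,l}(x)=(-1)^l\partial_y^{l-1}B_\pm(x,0)$, verify the recursion via \eqref{der4} (or, equivalently, by matching powers in the Schr\"odinger equation), and control the remainder and its $k_\pm$-derivatives through the moment estimates of Lemma~\ref{Best} together with Plancherel---is exactly what the paper does. One small bookkeeping slip: your displayed sum runs to $r=n$, so the boundary term $(-1)^{n+1}b_{\pm,n}(x)/(\pm2\I k_\pm)^{n+1}$ sits at the same order as the remainder and must be absorbed into $U_{\pm,n}$ to match \eqref{exp1}; the paper's formula \eqref{UU} includes this constant $u_{\pm,n+1}(x)$.

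There is, however, a genuine ordering problem in your treatment of $\partial_x U_{\pm,n}$. You correctly flag that differentiating your integrand would produce $\partial_x\partial_y^{n+1}B_\pm$, of total order $n+2$, one beyond what Lemma~\ref{Best} supplies. But your proposed cure---apply \eqref{der4} to trade this for $\partial_y^{n+2}B_\pm$ and then integrate by parts---still passes through an object of order $n+2$: neither $\partial_x\partial_y^{n+1}B_\pm$ nor $\partial_y^{n+2}B_\pm$ is known to exist under the hypotheses, so the chain as written is formal. The paper avoids this by reversing the order of operations: \emph{before} applying $\partial_x$, it undoes one integration by parts in $y$, writing
\[
\int_0^{\pm\infty}\frac{\partial}{\partial x} u_{\pm,n+1}(x,y)\,\E^{\pm 2\I k_\pm y}\,dy
=\frac{d}{dx}\Bigl(u_{\pm,n}(x)\mp 2\I k_\pm\int_0^{\pm\infty} u_{\pm,n}(x,y)\,\E^{\pm 2\I k_\pm y}\,dy\Bigr).
\]
The $x$-differentiation then lands on $u_{\pm,n}(x,y)=(-1)^n\partial_y^{n-1}B_\pm(x,y)$, so every derivative that actually appears has total order at most $n+1$, squarely within the range of Lemma~\ref{Best}. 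This maneuver produces exactly the factor $k_\pm$ you anticipated (hence the prefactor $1/k_\pm$ in \eqref{2}), and the final expression you describe is correct; only the intermediate route needs to be rerouted through lower-order objects by integrating by parts first and differentiating in $x$ second.
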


\begin{proof}
Formula \eqref{der4} implies
\beq\label{der4l}
\frac{\partial^s B_\pm(x,y)}{\partial y^s}=\frac{\partial^s B_\pm(x,y)}{\partial x\partial y^{s-1}} +\int_x^{\pm\infty}q_\pm(\alpha) \frac{\partial^{s-1} B_\pm(\alpha,y)}{\partial y^{s-1}} d\alpha,\quad s\geq 1.
\eeq
Integrating \eqref{Jost11} by parts and taking into account \eqref{der4l} with $s=n+1$ and  Lemma \ref{Best}, we get
\begin{align} \nn
\phi_\pm(k_\pm,x)\E^{\mp\I k_\pm x}= &1 \mp \frac{1}{2\I k_\pm} B_\pm(x,0) \pm\cdots+\frac{(-1)^{n}}{(\pm 2\I k_\pm)^n}\frac{\pa^{n-1}B_\pm(x,0)}{\pa y^{n-1}} \\ \nn
+ &\frac{(-1)^{n+1}}{(\pm 2\I k)^{n+1}}\left\{\frac{\pa^{n}B_\pm(x,0)}
{\pa y^{n}}\pm \int_0^{\pm\infty}\left(\frac{\pa}{\pa x}
\frac{\pa^{n}}{\pa y^{n}}B_\pm(x,y) \right.\right.\\ \label{uu7}
+&\left.\left.\int_x^{\pm\infty}q_\pm(\alpha)\frac{\pa^{n}}{\pa y^{n}}B_\pm(\alpha, y)d\alpha\right)\E^{\pm 2\I k_\pm y} dy\right\}.
\end{align}
Set
\[
u_{\pm,l}(x):=(-1)^{l} \frac{\pa^{l-1}B_\pm(x,0)}{\pa y^{l-1}},\ \ \ l\leq n+1.
\]
Then \eqref{der4l} implies  \eqref{uu5}. Put
\beq\label{defu}
u_{\pm,l+1}(x,y)=(-1)^{l+1}\frac{\pa^{l}B_\pm(x,y)}{\pa y^l},\ \ l\leq n.
\eeq 
By \eqref{decay}, \eqref{decay1}, \eqref{defsig}, \eqref{defsig1}, and \eqref{Best1} we have $\nu_{\pm,l}(\cdot)\in \mathcal L_{m-1}^0(\R_\pm)$. This implies
\beq\label{uu6}
u_{\pm,n+1}(x,\cdot),\ \frac{\partial}{\partial x}u_{\pm,n+1}(x,\cdot)\in \mathcal L^0_{m-1}(\R_\pm).
\eeq
Comparing \eqref{exp1} with \eqref{uu7} gives
\begin{align} \label{UU}
U_{\pm, n}(\la,x)&=u_{\pm, n+1}(x)
+ \int_0^{\pm\infty} \left(\frac{\partial}{\partial x} u_{\pm,n+1}(x,y)\right.\\ \nn
\pm&\left. \int_x^{\pm\infty} q_\pm(\alpha) u_{\pm,n+1}(\alpha,y)d\alpha\right) 
\E^{\pm 2\I k_\pm y} dy,
\end{align}
where the function $u_{\pm,n+1}(x,y)$, defined by \eqref{defu}, satisfies $u_{\pm,n+1}(x,0)=u_{\pm,n+1}(x)$.
From \eqref{uu5}, it follows that the representation for $u_{l,\pm}(x)$ involves $q_\pm^{(l-2)}(x)$ and  lower order derivatives of the potential. Thus $u_{\pm,n+1}(x)$ can be differentiated only one more time with respect to $x$.
But we cannot differentiate the right-hand side of \eqref{UU} directly under the integral. To avoid this, let us first integrate by parts the first summand in this integral. By \eqref{defu},  we have $\frac{\partial }{\partial y}u_{\pm,n}(x,y)=-u_{\pm,n+1}(x,y)$. Taking the derivative with respect to $x$ outside the integral, we get
\[
\int_0^{\pm\infty}\frac{\pa}{\pa x} u_{\pm,n+1}(x,y)\E^{\pm 2\I k_\pm y} dy=\frac{d}{dx}\left(u_{\pm,n}(x) \mp 2\I k_\pm\int_0^{\pm\infty} u_{\pm,n}(x,y)\E^{\pm 2\I k_\pm y} d y\right).
\]
According to \eqref{uu5}, we have
$u_{\pm,n+1}^\prime(x) +u_{\pm,n}^{\prime\prime}(x)=q_\pm(x)u_{\pm,n}(x)$,
and therefore
\[
\frac{\pa}{\pa x} U_{\pm,n}(\la,x)=2\I k_\pm\left(\frac{q_\pm(x)u_{\pm,n}(x)}{(2\I k_\pm)}
\mp\int_0^{\pm\infty}  \frac{\pa}{\pa x} u_{\pm,n}(x,y)\E^{\pm 2\I k_\pm y} d y\right) -
\]
\[
 \mp\int_0^{\pm\infty}u_{\pm,n+1}(x,y)q_\pm(x)\E^{\pm 2\I k_\pm y} d y,
\]
which together with \eqref{uu6} proves  \eqref{2}.
\end{proof}

Our next step is to specify an asymptotic expansion for the  Weyl functions
\beq\label{Weyl1}
m_\pm(\la,x)=\frac{\phi_\pm^\prime(\la,x)}{\phi_\pm(\la,x)}
\eeq
for the Schr\"odinger equation. Note that due to estimate \eqref{zeros} and continuity of $\hat\sigma(x)$ for any $b>0$ there exist some $k_0>0$ such that for all real $k_\pm$ with $|k_\pm|>k_0$ the function $\phi_\pm(\la,x)$ does not have zeros for $|x|<b$. Therefore, $m_\pm(k_\pm,x)$ is well-defined for all large real $k_\pm$ and $x$ in any compact set $\mathcal K\subset\R$.

\begin{lemma}\label{Weyllem}
Let  $q\in\mathcal L_m^n(c_+, c_-)$.
Then for large $\la\in\R_+$ the  Weyl functions \eqref{Weyl1} admit the asymptotic expansion
\beq\label{Weyl5}
m_\pm(k,x)= \pm\I\sqrt\la + \sum_{j=1}^n \frac{m_j(x)}{(\pm 2\I\sqrt\la)^j} +\frac{m_{\pm,n}(\la,x)}{(\pm2\I \sqrt\la)^n},
\eeq
where
\beq\label{rec}
m_1(x)=q(x),\ \ m_{l+1}(x) =-\frac{d}{dx} m_l(x) -\sum_{j=1}^{l-1}m_{l-j}(x)m_{j}(x),
\eeq
and the functions $m_{\pm,n}(\la,x)$ are $m-1$ times differentiable with respect to $k_\pm$ with
\beq\label{WW7}
\frac{\pa^s}{\pa k_\pm^s} m_n(\la,x)\in L^2(\infty),\quad s\leq m-1,\ \forall x \in\mathcal K.
\eeq
\end{lemma}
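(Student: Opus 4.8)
The plan is to exploit the fact that $m_\pm$, being the logarithmic derivative of a solution of the Schr\"odinger equation, satisfies a Riccati equation, and to feed into it the sharp expansion of $\phi_\pm$ already obtained in Lemma~\ref{expansion}. First I would record that, since $-\phi_\pm''+q\phi_\pm=\la\phi_\pm$, the Weyl function \eqref{Weyl1} obeys $\pa_x m_\pm+m_\pm^2=q-\la$ at every point where $\phi_\pm\neq0$; by the remark preceding the lemma this holds for all large real $k_\pm$ and all $x$ in the compact set $\mathcal K$. Writing $\phi_\pm=\E^{\pm\I k_\pm x}P_\pm$, where $P_\pm$ is the bracketed factor in \eqref{exp1}, gives the basic identity $m_\pm=\pm\I k_\pm+P_\pm'/P_\pm$. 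Since $u_0=1$ and, by \eqref{zeros}, $P_\pm\to1$ uniformly on $\mathcal K$ as $\la\to\infty$, for large $\la$ one has $|P_\pm|\geq 1/2$, so $1/P_\pm$ is smooth in $k_\pm$ with all derivatives bounded uniformly on $\mathcal K$.

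Next I would establish the \emph{existence} of an expansion of the asserted shape together with a controlled remainder, postponing the identification of its coefficients. Inserting the finite expansions of $P_\pm$ and $P_\pm'$ from Lemma~\ref{expansion} into $m_\pm=\pm\I k_\pm+P_\pm'/P_\pm$, expanding $1/P_\pm$ as a finite geometric series plus remainder, and converting powers of $k_\pm^{-1}$ and of $\pm\I k_\pm$ into powers of $\sqrt\la^{-1}$ via $k_\pm=\sqrt\la\,(1-c_\pm/\la)^{1/2}$, I arrive at a representation
\[
m_\pm(\la,x)=\pm\I\sqrt\la+\sum_{j=1}^{n}\frac{a_{\pm,j}(x)}{(\pm2\I\sqrt\la)^{j}}+\frac{m_{\pm,n}(\la,x)}{(\pm2\I\sqrt\la)^{n}}
\]
with smooth coefficients $a_{\pm,j}$ and a remainder $m_{\pm,n}$ into which, after multiplication by $(\pm2\I\sqrt\la)^{n}$, only the terms $U_{\pm,n}$ and $\pa_x U_{\pm,n}$ of \eqref{exp1} enter. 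The crucial computation is that the contribution of $\pa_x U_{\pm,n}$ carries the factor $(\sqrt\la)^{n}/k_\pm^{n+1}=k_\pm^{-1}\cdot O(1)$, so it reduces to $\bigl(k_\pm^{-1}\pa_x U_{\pm,n}\bigr)\cdot O(1)$, while the contribution of $U_{\pm,n}$ carries an extra $k_\pm^{-2}$. Since $L^2(\infty)$ is stable under multiplication by $1/P_\pm$, by the powers $k_\pm^{-l}$, and by the bounded change-of-variable factors—all of which have bounded $k_\pm$-derivatives for large $\la$—and since by \eqref{2} both $U_{\pm,n}$ and $k_\pm^{-1}\pa_x U_{\pm,n}$ have $L^2(\infty)$ derivatives in $k_\pm$ up to order $m-1$, Leibniz's rule yields $\pa_{k_\pm}^{s}m_{\pm,n}\in L^2(\infty)$ for $s\leq m-1$.

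Finally I would identify the coefficients by matching this expansion against the Riccati equation. Setting $\mu_\pm:=m_\pm\mp\I\sqrt\la=O(\la^{-1/2})$ turns Riccati into $\pa_x\mu_\pm\pm2\I\sqrt\la\,\mu_\pm+\mu_\pm^2=q$. Since the right-hand side of $\pa_x\mu_\pm=q\mp2\I\sqrt\la\,\mu_\pm-\mu_\pm^2$ manifestly admits an asymptotic expansion in $\sqrt\la^{-1}$, so does $\pa_x\mu_\pm$; this legitimizes termwise $x$-differentiation of the expansion of $\mu_\pm$ with coefficients $a_{\pm,j}'$ and, crucially, means I never differentiate the remainder in $x$ (which would destroy its order, as $\pa_x U_{\pm,n}$ only grows like $k_\pm$). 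Comparing powers of $\sqrt\la$ in $\pm2\I\sqrt\la\,\mu_\pm+\pa_x\mu_\pm+\mu_\pm^2=q$ then forces $a_{\pm,1}=q$ and $a_{\pm,r+1}=-a_{\pm,r}'-\sum_{j=1}^{r-1}a_{\pm,r-j}\,a_{\pm,j}$, which is exactly \eqref{rec}. Because this recursion involves neither the sign nor $c_\pm$, the coefficients are the universal functions $m_j(x)$, and \eqref{Weyl5} follows.

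I expect the main obstacle to be the remainder estimate of the second step. The naive remainder produced by $P_\pm'$ contains $\pa_x U_{\pm,n}$, which by itself grows like $k_\pm$ and would spoil membership in $L^2(\infty)$; it is precisely the refined bound $k_\pm^{-1}\pa_x U_{\pm,n}\in L^2(\infty)$ from \eqref{2} (rather than a bound on $\pa_x U_{\pm,n}$ itself) that absorbs this extra factor of $k_\pm$, and checking that this survives up to $m-1$ differentiations in $k_\pm$ through the division by $P_\pm$ and the regrouping into powers of $\sqrt\la$ is the technical heart of the argument.
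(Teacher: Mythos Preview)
Your proposal is correct and follows essentially the same route as the paper: both start from $m_\pm=\pm\I k_\pm+(\log Q_{\pm,n})'$ with $Q_{\pm,n}$ the bracketed factor in \eqref{exp1}, extract a polynomial expansion in $(2\I k_\pm)^{-1}$ plus an $L^2(\infty)$ remainder controlled by \eqref{2}, convert $k_\pm$ to $\sqrt\la$, and then use the Riccati equation to identify the coefficients with the universal $m_j(x)$. The only cosmetic difference is that the paper first splits $Q_{\pm,n}=P_{\pm,n}+U_{\pm,n}/(\pm2\I k_\pm)^{n+1}$ and writes $(\log Q_{\pm,n})'=P_{\pm,n}'/P_{\pm,n}+\text{correction}$ explicitly, whereas you expand $1/Q_{\pm,n}$ as a truncated geometric series; these are algebraically equivalent rearrangements of the same computation.
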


\begin{remark}
The recurrence relations \eqref{rec} are well-known for the case of the Schr\"odinger operator with smooth potentials and are usually proven via the Riccati equation satisfied by the Weyl functions.
Our point here is the fact that \eqref{Weyl5} is $m-1$ times differentiable with respect to $k_\pm$ together with \eqref{WW7}.
\end{remark}

\begin{proof}
We follow the proof of \cite{M}, Lemma 1.4.2, adapting it for the steplike case.
From \eqref{Weyl1} and \eqref{Sp}, we have $m_\pm(\la,x)=\I k_\pm + \kappa_\pm(\la,x)$, where $\kappa_\pm(\la,x)$
satisfy the equations
\[
\kappa_\pm^\prime(\la,x) \pm 2\I k_\pm \kappa_\pm(\la,x) + \kappa_\pm^2(\la,x) - q_\pm(x)=0,\quad \kappa_\pm(\la, x)=o(1),\quad \la\to\infty.
\]
Introduce notations $\phi_\pm(\la,x)=\E^{\pm\I k_\pm x} Q_{\pm,n}(\la,x)$, where (cf. Lemma \ref{expansion})
\begin{align}\label{Qn}
Q_{\pm,n}(\la,x) &:=P_{\pm,n}(\la,x)+ \frac{U_{\pm,n}(\la,x)}{(\pm 2\I k_\pm)^{n+1}},\\ \label{Pn}
P_{\pm,n}(\la,x) &:=1 +\frac{u_{\pm,1}(x)}{(\pm 2\I k)} +\cdots+ \frac{u_{\pm,n}(x)}{(\pm 2\I k)^n}.
\end{align}
Then
\[
\kappa_\pm(\la,x)=\frac{ P_{\pm,n}^\prime(\la,x)}{P_{\pm, n}(\la,x)} +\frac{ U_{\pm,n}^\prime(\la,x) P_{\pm,n}(\la,x) -U_{\pm,n}(\la,x)
P_{\pm,n}^\prime(\la,x)}{(\pm 2\I k_\pm)^{n+1} P_{\pm,n}(\la,x) Q_{\pm,n}(\la,x)}.
\]
Decompose the first fraction in a series with respect to $( 2\I k_\pm)^{-1}$ using \eqref{Pn}. Since $P_{\pm,n}(\la,x)\neq 0$  for $x\in\mathcal K$ and sufficiently large $\la$, then we get
\beq\label{seri}
\frac{ P_{\pm,n}^\prime(\la,x)}{P_{\pm,n}(\la,x)} =\sum_{j=1}^n \frac{\kappa_{\pm,j}(x)}{(\pm 2\I k_\pm)^j}  + \frac{f_{\pm,n}(\la,x)}{(\pm 2\I k_\pm)^n},\eeq
where $\kappa_{\pm,j}(x)$ are polynomials of $u_{\pm,l}$, $l\leq j$, and
 the function $f_{\pm,n}(\la,x)$ is infinitely many times differentiable with respect to $k_\pm$ for sufficiently big $k_\pm$, and 
\beq\label{f}
\frac{\pa^l}{\pa k_\pm^{l}}f(\la,x)\in L^2(\infty),\ l=0,1,\dots
\eeq
Correspondingly,
\beq\label{seri2}
\kappa_\pm(\la,x)=\sum_{j=1}^n \frac{\kappa_{\pm,j}(x)}{(\pm2\I k_\pm)^j} +\frac{\kappa_{\pm,n}(\la,x)}{(2\I k_\pm)^n},
\eeq
where
\[
\kappa_{\pm,n}(\la,x)=f_{\pm,n}(k,x) +
\frac{ U_{\pm,n}^\prime(\la,x)}{2\I k_\pm  Q_{\pm,n}(\la,x)} - \frac{ U_{\pm,n}(\la,x) P_{\pm,n}^\prime(\la,x)}{2\I k_\pm P_{\pm,n}(\la,x) Q_{\pm,n}(\la,x)}.
\]
Taking into account \eqref{uu5}, \eqref{uu6}, \eqref{2},
\eqref{Qn}, \eqref{Pn}, and \eqref{f}, we get 
\[
\frac{\pa^s}{\pa k_\pm^s} \kappa_{\pm,n}(\la,x)\in L^2(\infty),\quad s\leq m-1,\ \forall x \in\mathcal K.
\]
Next, due to \eqref{uu5}, the functions $u_l(x)$ depend on $q^{(l-2)}(x)$ and lower order derivatives of the potential, and can be differentiated at least twice more with respect to $x$ for $l\leq n$.
Since  the function $\phi_{\pm}(\la,x)$ itself is also twice differentiable with respect to $x$, the same is valid for
$U_{\pm,n}(\la,x)$ and $\kappa_\pm(\la,x)$. Hence each summand of \eqref{Pn} can be differentiated twice, and we
conclude that all $\kappa_{\pm,j}(x)$, $j\leq n$, in \eqref{seri2} are differentiable with respect to $x$, and so is $\kappa_{\pm,n}(\la,x)$.

Next, for large $\la$, we can expand $k_\pm$ with respect to $\sqrt\la$ and represent $m_\pm(\la,x)$ using \eqref{seri2} as
$m_\pm(\la,x)=\pm\I\sqrt\la +\tilde\kappa_\pm(\la,x)$, where
\[
\tilde\kappa_\pm(\la,x)=\sum_{j=1}^n \frac{\tilde\kappa_{\pm,j}(x)}{(\pm 2\I \sqrt\la)^j} +\frac{m_{\pm,n}(\la,x)}{(2\I \sqrt\la)^n}.
\]
Here $\tilde\kappa_{\pm,j}(x)$ are some other coefficients, but they also depend on the potential and its derivatives up to order $n-1$, i.e.,\ one time differentiable together with
$\tilde\kappa_{\pm,n}(\la,x)$ with respect to $x$. Moreover, $m_{\pm,n}(\la,x)$ satisfies the same estimates as in
\eqref{WW7}.
But  $\tilde\kappa_\pm(\la,x)$ satisfies the Riccati equation
\[
\tilde\kappa_\pm^\prime(\la,x) \pm 2\I \sqrt\la \kappa_\pm(\la,x) + \kappa_\pm^2(\la,x) - q(x)=0,
\]
and therefore $\tilde\kappa_{+,l}(x)=\tilde\kappa_{-,l}(x)=m_l(x)$, where $m_l(x)$ satisfies \eqref{rec}. 
\end{proof}

\begin{corollary}\label{lemim}
Let $q\in \mathcal L_m^n(c_+,c_-)$ with $n\geq 1$ and $m\geq 1$. Then for any $\mathcal K\subset\R$, $x\in\mathcal K$ and sufficiently large  $\la>\ol c$
the  function
\[
f_{\pm,n}(\la,x):=k_\pm^n\left(\overline{m_\pm(\la,x)} -m_\mp(\la,x)\right)
\]
is $m-1$ times differentiable with respect to $k_\pm$ with
\[
\frac{\pa^s}{\pa k_\pm^s} f_{\pm,n}(\la,x)\in L^2(\infty),\quad 0\leq s\leq m-1.
\]
\end{corollary}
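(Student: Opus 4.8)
The plan is to extract the claim directly from the Weyl-function expansion of Lemma~\ref{Weyllem}, exploiting a cancellation that occurs when one subtracts the two expansions. The crucial structural observation is that the coefficients $m_j(x)$ in \eqref{Weyl5} are \emph{independent of the sign} $\pm$ --- the very same functions occur in $m_+$ and in $m_-$ --- and, since $m_1(x)=q(x)$ is real, the recurrence \eqref{rec} shows inductively that every $m_j(x)$ is real-valued. I would fix $\la>\ol c$ large enough that $\sqrt\la$ and $k_\pm=\sqrt{\la-c_\pm}$ are real and \eqref{Weyl5} applies, abbreviate $w:=2\I\sqrt\la$ (so that $\ol w=-w$), and substitute. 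Using reality of the $m_j$ and of $\sqrt\la$, a short computation gives, for instance in the upper-sign case,
\[
\ol{m_+(\la,x)}=-\I\sqrt\la+\sum_{j=1}^n\frac{(-1)^j m_j(x)}{w^j}+\frac{(-1)^n\,\ol{m_{+,n}(\la,x)}}{w^n},\qquad
m_-(\la,x)=-\I\sqrt\la+\sum_{j=1}^n\frac{(-1)^j m_j(x)}{w^j}+\frac{(-1)^n m_{-,n}(\la,x)}{w^n}.
\]

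In each difference $\ol{m_\pm}-m_\mp$ the leading term $\mp\I\sqrt\la$ and the entire polynomial part in $w^{-1}$ cancel termwise, leaving only the remainder contributions:
\[
\ol{m_\pm(\la,x)}-m_\mp(\la,x)=\frac{\varepsilon_\pm\big(\ol{m_{\pm,n}(\la,x)}-m_{\mp,n}(\la,x)\big)}{(2\I\sqrt\la)^{\,n}},
\qquad \varepsilon_\pm\in\{1,-1\}.
\]
Multiplying by $k_\pm^n$ and writing $(k_\pm/\sqrt\la)^n=(1-c_\pm/\la)^{n/2}$ then yields
\[
f_{\pm,n}(\la,x)=\frac{\varepsilon_\pm}{(2\I)^n}\Big(1-\frac{c_\pm}{\la}\Big)^{n/2}\big(\ol{m_{\pm,n}(\la,x)}-m_{\mp,n}(\la,x)\big).
\]
For $\la$ large the prefactor $(1-c_\pm/\la)^{n/2}$ is a bounded $C^\infty$ function of $\la$ (hence of $k_\pm$) whose derivatives of every order are bounded and decay; so, modulo such harmless factors, $f_{\pm,n}$ is just $\ol{m_{\pm,n}}-m_{\mp,n}$.

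It remains to differentiate $s\le m-1$ times in $k_\pm$ and check $L^2(\infty)$ membership. By Leibniz, each such derivative is a finite sum of products of a bounded smooth factor (a derivative of the prefactor) with a derivative of $\ol{m_{\pm,n}}-m_{\mp,n}$; since bounded factors preserve $L^2(\infty)$ and the space is conjugation-invariant, it suffices to show $\pa_{k_\pm}^r\,m_{\pm,n}$ and $\pa_{k_\pm}^r\,m_{\mp,n}$ lie in $L^2(\infty)$ for $r\le m-1$. The first follows at once from \eqref{WW7}. The second is the only delicate point: Lemma~\ref{Weyllem} controls $m_{\mp,n}$ only with respect to its \emph{own} local parameter $k_\mp$, whereas here we differentiate in $k_\pm$. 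I would resolve this by the chain rule: for large $k_\pm$ the relation $k_\mp=\sqrt{k_\pm^2+c_\pm-c_\mp}$ is smooth with $dk_\mp/dk_\pm=k_\pm/k_\mp\to1$ and all higher derivatives bounded, so by Fa\`a di Bruno $\pa_{k_\pm}^r m_{\mp,n}$ is a finite combination, with bounded coefficients, of $\pa_{k_\mp}^{r'}m_{\mp,n}$ with $r'\le r$. Each of these is in $L^2(\infty)$ by \eqref{WW7}, and since $L^2(\infty)$ is defined intrinsically through the $\la$-integral (and, by the remark following its definition, does not distinguish $k_+$ from $k_-$), the combination stays in $L^2(\infty)$. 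The symmetry property $g(\la+\I0)=\ol{g(\la-\I0)}$ is inherited throughout. Thus the main obstacle is precisely this cross-parameter differentiation, which the asymptotic triviality of the change of variables $k_\pm\mapsto k_\mp$ renders harmless.
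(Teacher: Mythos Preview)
Your argument is correct and is precisely the computation the paper has in mind: the corollary is stated without proof in the paper, as an immediate consequence of the Weyl expansion \eqref{Weyl5} and the key fact (established in the proof of Lemma~\ref{Weyllem}) that the coefficients $m_j(x)$ are the same for both signs and are real. Your handling of the cross-parameter differentiation via the smooth change of variables $k_\mp=\sqrt{k_\pm^2+c_\pm-c_\mp}$ is the natural way to fill in the one nontrivial detail.
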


The claim of Theorem \ref{reflcomp} follows immediately from \eqref{2.17}, evaluated for $x\in\mathcal K$, \eqref{refff}, \eqref{Weyl1}, Lemma \ref{Weyllem}, and Corollary \ref{lemim}.

\bigskip
\noindent{\bf Acknowledgment.}

I.E.\ gratefully acknowledges the hospitality of the Department of Mathematics of Purdue University where this work was initiated.

\end{document}